\newtheorem{thm}{Theorem}[section]
\newtheorem{prop}[thm]{Proposition}
\newtheorem{cor}[thm]{Corollary}
\newtheorem{lem}[thm]{Lemma}
\newtheorem{defn}[thm]{Definition}
\newenvironment{xpl}{\refstepcounter{thm} \medskip \noindent {\bf  Example \arabic{section}.\arabic{thm}}}{\hfill$\diamondsuit$\mbox{}\bigskip}
\newcounter{num}
\newenvironment{thmlist}{\begin{list}{(\roman{num})}{\usecounter{num}\setlength{\leftmargin}{25pt}
\setlength{\itemindent}{0pt}\setlength{\labelwidth}{20pt}\setlength{\labelsep}{5pt}\setlength{\itemsep}{0in}}}{\end{list}}
\newcommand{\C}{\mathbb{C}}
\newcommand{\R}{\mathbb{R}}
\newcommand{\Z}{\mathbb{Z}}
\newcommand{\N}{\mathbb{N}}
\newcommand{\Q}{\mathbb{Q}}
\newcommand{\Cl}{\operatorname{Cl}}
\newcommand{\cps}{\mathbb{C}P}
\newcommand{\contr}{\,\lrcorner\,}
\newcommand{\ol}[1]{\bar{#1}}
\newcommand{\superscript}[1]{\ensuremath{^{\textrm{#1}}}}
\newcommand{\codim}{\operatorname{codim}}
\newcommand{\Aut}{\operatorname{Aut}}
\newcommand{\Hom}{\operatorname{Hom}}
\newcommand{\ric}{\operatorname{Ricci}}
\newcommand{\Ric}{\operatorname{Ric}}
\newcommand{\Ord}{\operatorname{Ord}}
\newcommand{\Ind}{\operatorname{Ind}}
\newcommand{\Vol}{\operatorname{Vol}}
\newcommand{\rank}{\operatorname{rank}}
\newcommand{\Spec}{\operatorname{Spec}}
\newcommand{\SF}{\operatorname{SF}}
\newcommand{\Picorb}{\operatorname{Pic^{\text{orb}}}}
\newcommand{\Pic}{\operatorname{Pic}}
\newcommand{\CDiv}{\operatorname{CDiv}}
\newcommand{\WDiv}{\operatorname{WDiv}}
\newcommand{\Sing}{\operatorname{Sing}}
\newcommand{\Int}{\operatorname{Int}}
\title{Examples of asymptotically conical Ricci-flat K\"{a}hler manifolds}
\author{Craig van Coevering}
\address{Department of Mathematics, Massachusetts Institute of Technology, 77 Massachusetts Avenue, Cambridge, MA 02139-4307}
\email{craig@math.mit.edu}
\date{December 30, 2008}
\keywords{Calabi-Yau manifold, Sasaki manifold, Einstein metric, Ricci-flat manifold, toric varieties}
\subjclass{Primary 53C25, Secondary 53C55, 14M25 }
\begin{document}

\begin{abstract}
Previously the author has proved that a crepant resolution $\pi:Y\rightarrow X$ of a Ricci-flat K\"{a}hler cone $X$
admits a complete Ricci-flat K\"{a}hler metric asymptotic to the cone metric in every K\"{a}hler class in $H^2_c(Y,\R)$.
These manifolds can be considered to be generalizations of the Ricci-flat ALE K\"{a}hler spaces known by the work of
P. Kronheimer, D. Joyce and others.

This article considers further the problem of constructing examples.  We show that every 3-dimensional Gorenstein toric K\"{a}hler
cone admits a crepant resolution for which the above theorem applies.  This gives infinitely many examples of asymptotically conical
Ricci-flat manifolds.    Then other examples are given of which are crepant
resolutions hypersurface singularities which are known to admit Ricci-flat K\"{a}hler cone metrics by the work of
C. Boyer, K. Galicki, J. Koll\'{a}r, and others.  We concentrate on 3-dimensional examples.  Two families of
hypersurface examples are given which are distinguished by the condition $b_3(Y)=0$ or $b_3(Y)\neq 0$.
\end{abstract}

\maketitle

\section{Introduction}

Recall that a Sasaki-Einstein manifold is a Riemannian manifold $(S,g)$ whose metric cone
$(C(S),\ol{g})$, $C(S)=\R_{>0} \times S$ and $\ol{g}=dr^2 +r^2 g$, is Ricci-flat K\"{a}hler.
It follows that $(S,g)$ is positive scalar curvature Einstein.  Besides the case $S=S^{2n-1}$ and $C(S)=\C^n$
the K\"{a}hler cone $C(S)$ has a singularity at the apex.  In~\cite{vC2} the author investigated the existence
of a Ricci-flat K\"{a}hler metric on a resolution $\pi:Y\rightarrow X$, where $X=C(S)\cup\{o\}$ is a Ricci-flat
K\"{a}hler cone.  The resolution will necessarily be a crepant, and one requires that the metric on $Y$ be
asymptotic to the original Ricci-flat K\"{a}hler cone metric on $X$.  The following theorem was proved.

\begin{thm}[\cite{vC2}]\label{thm:main}
Let $\pi :Y\rightarrow X$ be a crepant resolution of the isolated singularity of $X=C(S)$, where $C(S)$ admits a
Ricci-flat K\"{a}hler cone metric.  Then $Y$ admits a Ricci-flat K\"{a}hler metric $g$ in each K\"{a}hler class
in $H_c^2(Y,\R)\subset H^2(Y,\R)$ which is asymptotic to the K\"{a}hler cone metric $\ol{g}$ on $X$ as follows.
There is an $R>0$ such that, for any $\delta>0$ and $k\geq 0$,
\begin{equation}\label{eq:asymp}
\nabla^k \left(\pi_* g -\ol{g}\right) =O\left( r^{-2n+\delta-k}\right)\quad\text{on }\{y\in C(S):r(y)>R\},
\end{equation}
where $\nabla$ is the covariant derivative of $\ol{g}$.
\end{thm}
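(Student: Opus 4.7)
The plan is to follow the Calabi--Yau strategy used by Tian--Yau and Joyce in the ALE setting: reduce the construction of the Ricci-flat metric to a complex Monge--Amp\`ere equation on $Y$, and then control the asymptotic behavior using analysis in weighted function spaces adapted to the conical end.

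First, I would construct a background K\"ahler metric $\omega_0$ on $Y$ representing the prescribed class in $H^2_c(Y,\R)$. Because the class is compactly supported, $\omega_0$ can be chosen to coincide with the cone metric $\ol{\omega}$ on $\{r > R_0\}$ for some large $R_0$. Since $\pi$ is crepant, $Y$ inherits a nowhere-vanishing holomorphic $(n,0)$-form $\Omega$ obtained by pulling back the parallel holomorphic volume form on the Ricci-flat cone. Ricci-flatness of $\omega = \omega_0 + i\partial\bar\partial\phi$ is then equivalent to a Monge--Amp\`ere equation
\[
(\omega_0 + i\partial\bar\partial\phi)^n = e^F \omega_0^n,
\]
where $e^F\omega_0^n = c\,\Omega \wedge \ol{\Omega}$ for an appropriate constant $c$. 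Because $\omega_0$ equals the Ricci-flat cone metric near infinity, the source function $F$ has compact support and lies in any weighted space used below.

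Next, I would solve this Monge--Amp\`ere equation in a scale of weighted H\"older spaces $C^{k,\alpha}_\beta(Y)$ tailored to the asymptotically conical end. The linear ingredient is a Fredholm theorem: for weights $\beta$ avoiding a discrete set of indicial roots determined by the spectrum of the link Laplacian on $(S,g)$, the operator $\Delta_{\omega_0}$ is an isomorphism between suitable weighted spaces, with inverse obeying the expected Green's-function decay on the cone. A continuity-method argument then produces a solution $\phi$ whose decay rate is dictated by the weight at which $\Delta$ inverts $F$; since $F$ is compactly supported and the real dimension of $Y$ is $2n \geq 4$, one obtains $\phi = O(r^{2-2n+\delta})$, and hence $\omega - \ol{\omega} = O(r^{-2n+\delta})$. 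The higher-derivative bounds $\nabla^k(\omega - \ol{\omega}) = O(r^{-2n+\delta-k})$ then follow from interior Schauder estimates applied on dyadic annuli $\{2^j R \leq r \leq 2^{j+1} R\}$, rescaled using the dilation symmetry of the cone metric $\ol{g}$.

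The main obstacle will be the nonlinear weighted a priori estimates needed to close the continuity method: one must produce uniform $C^0$ and $C^{2,\alpha}$ bounds on $\phi$ in the weighted norm along the continuity path, without losing the decay rate at infinity. The Ricci-flatness of the cone, which controls the spectrum of the link Laplacian and in particular rules out growing harmonic functions competing with the desired decaying solution, together with the compact support of $F$ arising from the compactly supported K\"ahler class, are precisely what make these weighted estimates tractable. Once the linear Fredholm theory and nonlinear a priori bounds are in place, the remainder of the argument parallels the non-compact Calabi--Yau theorem of Tian--Yau.
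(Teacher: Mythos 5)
Theorem~\ref{thm:main} is not proved in this paper at all; it is quoted from \cite{vC2} (with the detailed argument in \cite{vC3}), and your sketch follows essentially the same route as that proof: choose a background K\"ahler form in the compactly supported class that agrees with the cone metric outside a compact set, reduce Ricci-flatness to a complex Monge--Amp\`ere equation whose right-hand side is compactly supported thanks to the crepant pull-back of the parallel volume form, and run a Tian--Yau/Joyce-type continuity method in weighted H\"older spaces, with the indicial-root analysis of the cone Laplacian and scaled Schauder estimates on dyadic annuli giving the $O(r^{-2n+\delta-k})$ decay. So your proposal is correct in approach and matches the cited proof.
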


We also considered the toric case.  In this case $X=C(S)$
is a Gorenstein toric K\"{a}hler cone which admits a toric Ricci-flat K\"{a}hler cone metric by the
results of~\cite{FOW}.  In this case a crepant resolution $\pi:Y\rightarrow X$ is toric, and $Y$ is described
explicitly by a nonsingular simplicial fan $\tilde{\Delta}$ refining the convex polyhedral cone $\Delta$ defining $X$.
To ensure that there is a K\"{a}hler class in $H_c^2(Y,\R)$ we require that $\tilde{\Delta}$ defining $\pi:Y\rightarrow X$ admits a \emph{compact} strictly convex support function.  This is a strictly convex support function on $\tilde{\Delta}$
satisfying the additional condition that it vanishes on the rays defining $\Delta$.  We prove the following.
\begin{cor}[\cite{vC2}]\label{cor:main}
Let $\pi:Y\rightarrow X$ be a crepant resolution of a Gorenstein toric K\"{a}hler cone $X$ with an isolated
singularity.  Suppose the fan $\tilde{\Delta}$ defining $Y$ admits a compact strictly convex support function.
Then $Y$ admits a Ricci-flat K\"{a}hler metric $g$ which is asymptotic to $(C(S),\ol{g})$ as in
(\ref{eq:asymp}).  Furthermore, $g$ is invariant under the compact $n$-torus $T^n$.
\end{cor}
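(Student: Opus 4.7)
The plan is to reduce Corollary~\ref{cor:main} to Theorem~\ref{thm:main}. The two tasks that require work are (i) producing a Ricci-flat K\"{a}hler cone metric on $X$ and a K\"{a}hler class in $H_c^2(Y,\R)$ so that Theorem~\ref{thm:main} applies, and (ii) upgrading the metric it produces to one that is $T^n$-invariant.

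For (i), a Gorenstein toric K\"{a}hler cone with isolated singularity carries a $T^n$-invariant Ricci-flat K\"{a}hler cone metric $\ol{g}$ by the theorem of Futaki--Ono--Wang~\cite{FOW}; equivalently, the link $S$ admits a $T^n$-invariant Sasaki-Einstein metric. Now let $\psi$ be the given compact strictly convex support function on $\tilde{\Delta}$, piecewise linear with values $\psi(v_\rho)=-a_\rho$ on the primitive generators $v_\rho$ of the rays of $\tilde{\Delta}$. Standard toric geometry attaches to $\psi$ a $T^n$-invariant divisor $D_\psi=\sum_\rho a_\rho D_\rho$ on $Y$ together with a closed $T^n$-invariant $(1,1)$-form $\omega_\psi$ representing its first Chern class; strict convexity of $\psi$ is equivalent to positivity of $\omega_\psi$, so $[\omega_\psi]$ is a K\"{a}hler class on $Y$. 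The vanishing of $\psi$ on the rays of $\Delta$ forces $a_\rho=0$ for those rays, so $D_\psi$ is supported on the exceptional locus $E=\pi^{-1}(o)$; since $\pi$ is proper with a single singular fibre, $E$ is compact, and therefore $[\omega_\psi]$ lies in the image of the natural map $H_c^2(Y,\R)\to H^2(Y,\R)$.

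Applying Theorem~\ref{thm:main} to this class yields a Ricci-flat K\"{a}hler metric $g$ on $Y$ in $[\omega_\psi]$ satisfying the asymptotics~(\ref{eq:asymp}). For the remaining $T^n$-invariance, which I expect to be the most delicate point, the key observation is that $\ol{g}$, the reference form $\omega_\psi$, and the weighted function spaces used in~\cite{vC2} are all $T^n$-invariant, so the complex Monge--Amp\`{e}re problem solved in~\cite{vC2} is $T^n$-equivariant. Thus for each $t\in T^n$ the pullback $t^*g$ is again a Ricci-flat K\"{a}hler metric in $[\omega_\psi]$ satisfying~(\ref{eq:asymp}), and the uniqueness of such a solution---obtained in~\cite{vC2} by the standard maximum-principle argument applied to the difference of two K\"{a}hler potentials with the prescribed decay---forces $t^*g=g$. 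Equivalently, one may verify that the continuity method of~\cite{vC2}, initialised with the $T^n$-invariant potential of $\omega_\psi$, preserves $T^n$-invariance at every step.
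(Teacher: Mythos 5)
Your reduction is the same one the paper intends: combine the Futaki--Ono--Wang existence of a toric Ricci-flat K\"ahler cone metric on the Gorenstein toric cone $X$ with Theorem~\ref{thm:main}, after exhibiting a K\"ahler class on $Y$ coming from $H^2_c(Y,\R)$, and then argue $T^n$-invariance. The place where you diverge from the paper is in how the compactly supported K\"ahler class is produced. You attach to the support function $\psi$ the divisor $D_\psi$ and assert that ``strict convexity of $\psi$ is equivalent to positivity of $\omega_\psi$''; as literally stated this conflates strict convexity (which gives ampleness of $D_\psi$ relative to the affine base, i.e.\ a cohomological positivity statement) with the existence of an actual positive $(1,1)$-representative, and on the noncompact $Y=X_{\tilde\Delta}$ producing such a representative is exactly the nontrivial content. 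The paper supplies it by the Delzant/Burns--Guillemin--Lerman K\"ahler reduction recorded in Section~\ref{subsec:tor-res}: from the compact strictly convex $h$ one forms the polyhedral set $\mathcal{C}_h$ and the reduction $M_{\mathcal{C}_h}\cong X_{\tilde\Delta}$, which yields an explicit $T^n$-invariant K\"ahler form $\omega_h$ with $[\omega_h]=-2\pi\sum_{j>d}\lambda_j c_j\in H^2_c(Y,\R)$, precisely the Poincar\'e-dual description you arrive at via $D_\psi$. Your route can be completed (e.g.\ by a Guillemin-type potential for $\mathcal{C}_h$), but as written it leaves the key positivity step as an appeal to ``standard toric geometry.'' For the invariance of the Ricci-flat metric, the paper obtains it because the entire construction in~\cite{vC2} is carried out with $T^n$-invariant data (invariant cone metric, invariant reference form $\omega_h$, invariant potentials), so the solution is invariant by construction; your alternative via uniqueness of the decaying solution is legitimate but relies on a uniqueness statement that is not part of Theorem~\ref{thm:main} as quoted, so you should either cite the maximum-principle uniqueness from~\cite{vC2} explicitly or fall back on the equivariant continuity-method argument you sketch at the end. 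Net assessment: correct in outline and essentially the intended proof, with the two gaps noted (positive representative on a noncompact toric variety; source of uniqueness) being exactly what the reduction construction of~\cite{vC3} and the invariant formulation of~\cite{vC2} are there to fill.
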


This article further considers the toric case, and we find more examples using Corollary~\ref{cor:main}.
Already in~\cite{vC2} many examples of crepant resolutions of Gorenstein toric K\"{a}hler cones which satisfy
Corollary~\ref{cor:main} are given.  But for $n=3$ a much more exhaustive existence result can be given.
This is fortuitous as one source of interest in these asymptotically conical Calabi-Yau manifolds is in the AdS/CFT
correspondence (cf.~\cite{MS4,MS3}) and the $n=3$ case is of primary importance.

We show that for $n=3$ any Gorenstein toric K\"{a}hler cone $X$ admits a crepant resolution $\pi:Y\rightarrow X$
such that the fan $\tilde{\Delta}$ defining $Y$ admits a compact strictly convex support function provided
$X$ is not a terminal singularity.  The only Gorenstein toric K\"{a}hler cone for $n=3$ with a terminal singularity
is the quadric hypersurface $X=\{z_0^2 +z_1^2 +z_2^2 +z_3^2 =0\}\subset\C^4$.  Therefore we get the following.
\begin{thm}\label{thm:main-tor}
Let $X$ be a three dimensional Gorenstein toric K\"{a}hler cone with an isolated singularity which is not
the quadric hypersurface, as a variety.  Then there is a crepant resolution $\pi:Y\rightarrow X$ such that $Y$
admits a Ricci-flat K\"{a}hler metric $g$ which is asymptotic to $(C(S),\ol{g})$ as in (\ref{eq:asymp}).
Furthermore, $g$ is invariant under the compact torus $T^3$.
\end{thm}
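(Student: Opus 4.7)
The strategy is to recast the problem combinatorially as one about a lattice polygon in $\R^2$ and then to construct the required crepant resolution and compact strictly convex support function simultaneously from a height function arising from an auxiliary strictly concave function. The key ingredients are the toric dictionary between Gorenstein 3-fold cones and lattice polygons at height $1$, together with Pick's theorem.

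\textbf{Reduction to a polygon.} By the Gorenstein hypothesis, after an integer change of basis of $N\cong\Z^3$ the primitive ray generators of $\Delta$ lie in the affine hyperplane $\{z=1\}$, and I set $P:=\Delta\cap\{z=1\}$, a convex lattice polygon. The isolated-singularity hypothesis translates into $\partial P$ containing no lattice points other than the vertices of $P$, and the hypothesis that $X$ is not the quadric translates into $P$ having at least one interior lattice point: in the contrary case $P$ is an ``empty'' convex lattice polygon, and the classification of such 2-dimensional polygons (via Pick's theorem $\text{area}=|V|/2-1$) forces $P$, up to $\mathrm{GL}_2(\Z)$, to be either a unimodular triangle---which gives a \emph{smooth} cone, not a singularity---or the unit square, which gives the conifold. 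Under the standard toric dictionary a smooth crepant resolution $\pi:Y\to X$ corresponds to a unimodular triangulation of $P$ using \emph{every} lattice point of $P$ as a vertex (via the identity $\det[(v_1,1),(v_2,1),(v_3,1)]=\det(v_2-v_1,v_3-v_1)$ together with Pick's theorem), and a compact strictly convex support function on the associated fan $\tilde\Delta$ is the same data as a piecewise linear function $\phi:P\to\R$ vanishing on every vertex of $P$ and strictly convex across every interior edge of the triangulation.

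\textbf{Constructing $\phi$ and the triangulation.} I would build $\phi$ and the triangulation together by choosing a smooth strictly concave function $F:\R^2\to\R$ with $F(v_i)=0$ at every vertex $v_i$ of $P$ and $F>0$ on the interior of $P$, and then setting heights $h(v_i):=0$ at vertices and $h(w_j):=-\epsilon F(w_j)$ at the interior lattice points $w_j$, for small $\epsilon>0$. Such an $F$ exists: the finitely many vertices of $P$, being in convex position, lie on the boundary of some smooth strictly convex region, which is the zero level set of a smooth strictly concave function. Strict concavity of $F$ together with $F(v_k)=0$ yields the key inequality $F(w_j)>\sum_i\lambda_iF(w_i)$ for every nontrivial convex combination $w_j=\sum_i\lambda_iw_i+\sum_k\mu_kv_k$ of other lattice points, which is exactly the condition that $(w_j,h(w_j))$ be a strict vertex of the lower convex envelope of the lifted point configuration. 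After a small generic perturbation of $F$ to rule out coplanar 4-tuples of lifted points, every lattice point of $P$ is therefore a vertex of the induced regular triangulation $T$; no triangle of $T$ can then contain any lattice point in its interior, so Pick's theorem forces each triangle to have area $\tfrac{1}{2}$, i.e., to be unimodular. The PL function $\phi$ corresponding to $h$ then vanishes on the vertices of $P$ and is strictly convex across every interior edge of $T$, so Corollary~\ref{cor:main} applies and produces the desired Ricci-flat K\"ahler metric on $Y$.

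\textbf{Main obstacle.} The core tension is that $\phi$ must vanish at every vertex of $P$ simultaneously (so as to represent a compactly supported K\"ahler class) and yet be strictly convex across every interior wall of $T$. No smooth strictly convex function on $P$ can satisfy both conditions, since such a function would have to be flat along each boundary edge of $P$. The resolution---and the heart of the argument---is that only the \emph{discrete} version of strict concavity of the auxiliary function $F$, namely strict inequality on convex combinations of lattice points, is needed for the corresponding PL function $\phi$ to be strictly convex, and this discrete condition is automatic from strict concavity of $F$ on $\R^2$ even though $F$ cannot be strictly concave along $\partial P$. The construction collapses precisely when $P$ has no interior lattice points at which to place $h<0$---that is, in the conifold case---which is consistent with the fact that the two small resolutions of the quadric admit no compact strictly convex support function, the only PL function vanishing on all four rays of the square being identically zero.
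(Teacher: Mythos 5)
Your proposal is correct, and it reaches the same intermediate target as the paper, namely Proposition~\ref{prop:toric-res} (a basic lattice triangulation of $P_{\Delta}$ whose fan $\tilde{\Delta}$ carries a compact strictly convex support function), after which Corollary~\ref{cor:main} finishes the argument exactly as you say; but your construction of that combinatorial data is genuinely different. The paper argues by induction on generalized toric blow-ups: it star-subdivides at one interior lattice point of $P_{\Delta}$ at a time, treating separately the cases where the new point lies in the interior of a $3$-cone or of a $2$-cone (blow-up at a point versus along an invariant curve), and at each stage adjusts the support function by a sufficiently small $\epsilon_{k}$ so that it stays compact and strictly upper convex; unimodularity of the final triangulation comes, as in your argument, from the two-dimensional fact (Pick) that a maximal lattice triangulation is basic. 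You instead produce the triangulation and the support function in one shot as a regular (coherent) subdivision, lifting all lattice points of $P_{\Delta}$ by $-\epsilon F$ with $F$ strictly concave and vanishing at the vertices and taking the lower envelope. This buys a cleaner strictness verification (the cells of a regular subdivision are by construction the maximal domains of linearity, and local strictness across walls plus convexity upgrades to strict convexity), eliminates the case analysis and the nested smallness conditions, and makes vanishing on the boundary rays automatic; the paper's route, in exchange, exhibits $Y$ as an explicit sequence of crepant blow-ups and yields a rational $h\in\SF(\tilde{\Delta},\Q)$, hence projectivity of the resolution, with no extra effort. Two details you should state explicitly to make your version airtight: the genericity perturbation must fix the heights of the vertices of $P_{\Delta}$ at $0$ and move only the interior heights (the strict-vertex inequalities are finitely many strict inequalities, so they survive a small perturbation); and quadruples of lifted boundary vertices are unavoidably coplanar (all at height $0$), which is harmless because any interior lattice point has strictly negative height, forcing the envelope to be strictly negative on $\Int P_{\Delta}$, so no $2$-face of the envelope is horizontal over a $2$-dimensional subpolygon, while a non-horizontal $2$-face meets the height-$0$ plane in a line and so contains at most two lifted vertices of $P_{\Delta}$.
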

The proof is simple application of \emph{generalized} toric blow-ups of $X$ at points and along curves.
A similar argument shows that in dimensions $n\geq 4$ a Gorenstein toric K\"{a}hler cone $X$ admits a crepant
partial resolution $\pi:Y\rightarrow X$ such that $Y$ has only orbifold singularities and which satisfies
Corollary~\ref{cor:main}.  Note that the quadric hypersurface admits a small resolution
$\pi:Y\rightarrow X$ with exceptional set $\pi^{-1}(o)=\cps^1$.  And $Y$ admits a Ricci-flat K\"{a}hler
metric which is asymptotic to the cone over the homogeneous Sasaki-Einstein structure on $S^2 \times S^3$.
But the convergence in (\ref{eq:asymp}) is replaced by $O(r^{-2-k})$ (cf.~\cite{CanOs}).

Before we prove Theorem~\ref{thm:main-tor} we cover the toric geometry that is needed
to describe toric K\"{a}hler cones and their resolutions.  We also give some results on resolutions of
quotient singularities which will be used later.

Examples of Gorenstein toric K\"{a}hler cones, equivalently toric Sasaki-Einstein 5-manifolds, are known in
abundance (cf.~\cite{vC1,vC2} and~\cite{CFO}).  Note that if the Sasaki link $S\subset X$ is simply connected,
then in the toric case $H_2(S,\Z)=\Z^r$ and Smale's classification of 5-manifolds implies that
$S\overset{\text{diff}}{\cong} \# k(S^2 \times S^3)$.  Thus using Theorem~\ref{thm:main-tor} and the examples of
~\cite{vC1,vC2} and~\cite{CFO} we produce Ricci-flat asymptotically conical K\"{a}hler manifolds $Y$ asymptotic to
cones over $\# k(S^2 \times S^3)$.  And in fact, this produces infinitely many examples for each $k\geq 1$.

We also consider examples given by resolutions of hypersurface singularities.
There has been much research recently in constructing examples of Sasaki-Einstein manifolds, and many of the constructions
involve quasi-homogeneous hypersurface singularities.  The link $S\subset X$ of the singularity admits a Sasaki structure and
techniques have been developed to prove it admits a Sasaki-Einstein structure (cf.~\cite{BGN1,BGK,BG2}).
These Sasaki-Einstein manifolds provide many examples of Ricci-flat K\"{a}hler cones for which we can try to find a crepant resolution
and apply Theorem~\ref{thm:main}.
Again we concentrate on the $n=3$ case.  In this case much is known on the existence of crepant resolutions of these singularities.
We review much of this in Section~\ref{sec:resolutions}.  We also give some useful results on the algebraic geometry and topology of
crepant resolutions $\pi:Y\rightarrow X$ when they exist.  We then give some families of examples.
The first group are hypersurface singularities for which the terminalization procedure of M. Reid, first described in
~\cite{Re1}, can be carried out without much difficulty.  These examples have $b_3 (Y) \neq 0$ in contrast with the toric case.
And many of the Sasaki links $S\subset Y$ are rational homology spheres.  Some properties of the links $S$ and
the resolved spaces $Y$ are listed in Figure~\ref{fig:b3neq0}.

We then consider a family of examples which are also resolutions of quasi-homogeneous hypersurfaces.  In this case
they are resolved to orbifolds and then the quotient singularities are resolved.  This uses the fact that every
Gorenstein quotient singularity in dimension three admits a crepant resolution.  The links of the hypersurfaces in most of these examples
were proved to admit Sasaki-Einstein metrics in ~\cite{JoKol} and ~\cite{BGN1,BGN2}.
These examples all have $b_3(Y)=0$.
The topological types of the links $S\subset Y$ and properties of the resolution $Y$ are listed in Figure~\ref{fig:b3eq0}.

\section{K\"{a}hler cones and Sasaki manifolds}

\subsection{Introduction}

We review some of the properties of Sasaki manifolds.  We are primarily interested in K\"{a}hler cones, and in particular
Ricci-flat K\"{a}hler cones.  But a K\"{a}hler cone is a cone over a Sasaki manifold and is Ricci-flat precisely when the Sasaki
manifold is Einstein.  And there has been much research recently on Sasaki-Einstein manifolds (cf.~\cite{BG1,BG2,FOW}).
See~\cite{BG2} for more details on Sasaki manifolds.
\begin{defn}
 A Riemannian manifold $(S,g)$ of dimension $2n-1$ is Sasaki if the metric cone $(C(S),\ol{g})$, $C(S)=\R_{>0}\times S$
and $\ol{g}=dr^2 +r^2 g$, is K\"{a}hler.
\end{defn}

Set $\tilde{\xi}=J(r\frac{\partial}{\partial r})$, then $\tilde{\xi}-iJ\tilde{\xi}$ is a holomorphic vector field
on $C(S)$.  The restriction $\xi$ of $\tilde{\xi}$ to $S=\{r=1\}\subset C(S)$
is the \emph{Reeb vector field} of $S$, which is a Killing vector field.
If the orbits of $\xi$ close, then it defines a locally free $U(1)$-action on $S$.  If the $U(1)$-action is free,
then the Sasaki structure is said to be \emph{regular}.  If there are non-trivial stabilizers then the Sasaki
structure is \emph{quasi-regular}.  If the orbits do not close the Sasaki structure is \emph{irregular}.
The closure of the subgroup of the isometry group generated by $\xi$ is a torus $T^k$.  We define the
\emph{rank} of the Sasaki manifold to be $\rank (S):=k$.

Let $\eta$ be the dual 1-form to $\xi$ with respect to $g$.  Then
\begin{equation}\label{eq:cont}
\eta = (2d^c \log r)|_{r=1},
\end{equation}
where $d^c=\frac{i}{2}(\ol{\partial}-\partial)$.  Let $D=\ker\eta$.  Then $d\eta$ in non-degenerate on $D$
and $\eta$ is a contact form on $S$.  Furthermore, we have
\begin{equation}
d\eta(X,Y) =2g(\Phi X,Y), \quad\text{for }X,Y\in D_x,\ x\in\mathcal{S},
\end{equation}
where $\Phi$ defined by $\Phi(V) =JV$ for $V\in D_x$, and $\Phi(\xi)=0$. Thus
$(D,J)$ is a strictly pseudo-convex CR structure on $S$.
We will denote a Sasaki structure on $S$ by $(g,\xi,\eta,\Phi)$.
It follows from (\ref{eq:cont}) that the K\"{a}hler form of $(C(S),\ol{g})$ is
\begin{equation}\label{eq:kahler-form}
 \omega=\frac{1}{2}d(r^2 \eta)=\frac{1}{2}dd^c r^2.
\end{equation}
Thus $\frac{1}{2}r^2$ is a K\"{a}hler potential for $\omega$.

There is a 1-dimensional foliation $\mathscr{F}_\xi$ generated by the Reeb vector field $\xi$.  Since the leaf
space is identical with that generated by $\tilde{\xi}-iJ\tilde{\xi}$ on $C(S)$, $\mathscr{F}_\xi$ has
a natural transverse holomorphic structure.  And $\omega^T =\frac{1}{2}d\eta$ defines a K\"{a}hler form on the
leaf space.  We will denote the transverse K\"{a}hler metric by $g^T$.
Note that when the Sasaki structure on $S$ is regular (resp. quasi-regular), the leaf space of
$\mathscr{F}_\xi$ is a K\"{a}hler manifold (resp. orbifold).

A p-form $\alpha\in\Omega^p(S)$ on $S$ is said to be \emph{basic} if
\begin{equation}
\xi\contr\alpha =0\quad\text{and}\quad \mathcal{L}_\xi \alpha =0.
\end{equation}
The basic p-forms are denoted by $\Omega_B^p(S)$, where the foliation $\mathscr{F}_\xi$ on $S$ must be fixed.
One easily checks that $\Omega_B^*$ is closed under the exterior derivative.  So we have the basic de Rham complex
\begin{equation}
0\longrightarrow\Omega^0_B \overset{d}{\longrightarrow}\Omega^1_B \overset{d}{\longrightarrow}\cdots\longrightarrow\Omega^{2n-2}_B \longrightarrow 0,
\end{equation}
and the basic cohomology $H^*_B(S)$.

The foliation $\mathscr{F}_\xi$ associated to a Sasaki structure has a transverse holomorphic structure, so there is a
splitting $\Omega_B ^k =\bigoplus_{p+q=k} \Omega_B^{p,q}$ of complex forms into types.  And the exterior derivative on
basic forms splits into $d=\partial +\ol{\partial}$, where $\partial$ has degree $(1,0)$ and $\ol{\partial}$ has degree $(0,1)$.
Thus we have as well the basic Dolbeault complex
\begin{equation}
0\longrightarrow\Omega^{p,0}_B \overset{\ol{\partial}}{\longrightarrow}\Omega^{p,1}_B \overset{\ol{\partial}}{\longrightarrow}\cdots\longrightarrow\Omega^{p,2n-2}_B \longrightarrow 0,
\end{equation}
and the basic Dolbeault cohomology groups $H^{p,q}_B(S)$.

Furthermore, the foliation has a transverse K\"{a}hler structure, and the usual Hodge theory for K\"{a}hler manifolds carries over.
In particular, we have the Hodge decomposition $H^k_B(S,\C)=\bigoplus_{p+q=k} H^{p,q}_B(S)$ and the representation of basic cohomology classes by harmonic forms.  It is also useful to know that the $\partial\ol{\partial}$-lemma holds for basic forms as it does on K\"{a}hler
manifolds.  Thus if $\phi\in\Omega^{1,1}_B$ is exact, then there is a basic $f\in C_B^{\infty}$ with $\phi=i\partial\ol{\partial}f$ and
$f$ can be taken to be real if $\phi$ is.  See the monograph~\cite{BG2} for a survey of these results.

We will consider deformations of the transverse K\"{a}hler structure.  Let $\phi\in C^\infty_B(S)$ be a smooth basic
function.  Then set
\begin{equation}
\tilde{\eta} =\eta +2d^c_B \phi.
\end{equation}
Then
\[ d\tilde{\eta} =d\eta +2d_B d^c_B \phi =d\eta +2i\partial_B \ol{\partial}_B \phi. \]
For sufficiently small $\phi$, $\tilde{\eta}$ is a non-degenerate contact form in that $\tilde{\eta}\wedge d\tilde{\eta}^n$
is nowhere zero.  Then we have a new Sasaki structure on $S$ with the same Reeb vector field $\xi$, transverse
holomorphic structure on $\mathscr{F}_\xi$, and holomorphic structure on $C(S)$.  This Sasaki structure has
transverse K\"{a}hler form $\tilde{\omega}^T=\omega^T +i\partial_B \ol{\partial}_B \phi$.  One can show~\cite{FOW}
that if
\[\tilde{r} =r\exp{\phi},\]
then $\tilde{\omega}=\frac{1}{2}dd^c \tilde{r}^2$ is the K\"{a}hler form on $C(S)$ associated to the
transversally deformed Sasaki structure.

\begin{xpl}\label{ex:Sasak-st}
Let $Z$ be a complex manifold (or orbifold) with a negative holomorphic line bundle (respectively V-bundle) $\mathbf{L}$.
If the total space of $\mathbf{L}^\times$, $\mathbf{L}$ minus the zero section, is smooth, then the $U(1)$-subbundle
$S\subset\mathbf{L}^\times$ has a natural regular (respectively quasi-regular) Sasaki structure.
Let $h$ be an Hermitian metric on $\mathbf{L}$ with negative curvature.  If in local holomorphic coordinates we define
$r^2 =h|z|^2$, where $z$ is the fiber coordinate, then $\omega =\frac{1}{2}dd^c r^2$ is the K\"{a}hler form on
$\mathbf{L}^\times$ of a K\"{a}hler cone metric.  And $S=\{z\in\mathbf{L}^\times : r(z)=1\}$ has the induce Sasaki structure.

Conversely, it can be shown that every regular (respectively quasi-regular) Sasaki structure arises from this construction
(cf.~\cite{BG}).
\end{xpl}

\begin{xpl}\label{ex:Sasak-sph}
This example will construct, up to automorphism, all the Sasaki structures on the sphere $S^{2n+1}$ with the standard
CR-structure, i.e. the CR-structure induced by the usual embedding $S^{2n+1}\subset\C^{n+1}$.  We denote by
$z_j=x_j +iy_j, j=0,\ldots,n$, the coordinates on $\C^{n+1}=\R^{2n+2}$.  The standard CR-structure $(D,J)$ is given by the kernel of
$\eta =\sum_{j=0}^n x_j dy_j -y_j dx_j$, with $J$ induced by the embedding $S^{2n+1}\subset\C^{n+1}$.  Let $\mathbf{\lambda}=(\lambda_0,\ldots, \lambda_n)\in (\R_+)^{n+1}$.
Then we have the action induced by by the diagonal matrix $\mathbf{\lambda}$ with vector field $X_{\lambda}$ and
$JX_{\lambda}=xi_{\mathbf{\lambda}}$ given by
\begin{equation}
\xi_{\mathbf{\lambda}} =\sum_{j=0}^n \lambda_j(x_j \partial_{y_j} -y_j \partial_{x_j}).
\end{equation}
Then as in~\cite{BGS2} there is a unique Sasaki structure $(g,\xi_{\mathbf{\lambda}},\eta_{\lambda},\Phi)$, denoted by
$S^{2n+1}_{\mathbf{\lambda}}$, with Reeb vector field $\xi_{\mathbf{\lambda}}$ and with the CR-structure $(J,D)$.
The contact form $\eta_{\mathbf{\lambda}}$ is
\begin{equation}\label{eq:sasak-sph}
 \eta_{\mathbf\lambda} =\frac{\sum_{j=0}^n (x_j dy_j -y_j dx_j)}{\sum_{j=0}^{n}\lambda_j(x_j^2 +y_j^2)}.
\end{equation}
The K\"{a}hler cone $C(S^{2n+1}_{\mathbf{\lambda}})$ can be identified biholomorphically with $\C^{n+1}\setminus\{0\}$.
This can be seen using the action of the Euler vector field $-J\xi_{\mathbf\lambda}$.

Conversely, a Sasaki structure on $S^{2n+1}$ with the standard CR-structure is given by a vector field $\xi$ on $S^{2n+1}$ transversal
to $D$ and inducing an automorphism of $(D,J)$.  The group of CR-automorphism of $(D,J)$ is known to be $SU(n+1,1)$.
The action of $SU(n+1,1)$ extends to the ball $B\subset\C^{n+1}$ bounded by $S^{2n+1}$.  Identify $B$ with the positive cone
\[ \{v\in V\ :\ (v,v)>0 \}, \]
where $(\cdot,\cdot)$ is the Hermitian form on $V=\C^{n+2}$ with signature $(n+1,1)$.  Then clearly $SU(n+1,1)$ acts transitively on
the interior of $B\subset\mathbb{P}(V)$.  The vector field $\xi$ must be induced by an element of $\mathfrak{su}(n+1,1)$, which
we denote by $\xi$ again.  The flow generated by $\xi$ on $B$ must have a fixed point $x\in\Int(B)$.  By conjugating by an element
$g\in SU(n+1,1)$ we may assume that $\xi$ vanishes at $0\in\C^{n+1}$.  Then $\xi\in\mathfrak{u}(n+1)$.  And by conjugating by
an element of $U(n+1)\subset SU(n+1,1)$ we may assume that $\xi$ is represented by a diagonal matrix with eigenvalues
$\alpha_j ,\ j=0,\ldots,n$.  Since $\xi$ is transversal to $D$, the real numbers $\lambda_j =-\sqrt{-1}\alpha_j$ are positive;
and we may assume $0<\lambda_0<\cdots<\lambda_n$.  This is clearly the Sasaki structure on $S^{2n+1}_{\mathbf{\lambda}}$ constructed
above.
\end{xpl}

\begin{prop}\label{prop:ricci}
Let $(S,g)$ be a $2n-1$-dimensional Sasaki manifold.  Then the following are equivalent.
\begin{thmlist}
 \item $(S,g)$ is Sasaki-Einstein with the Einstein constant being necessarily $2n-2$.

 \item $(C(S),\ol{g})$ is a Ricci-flat K\"{a}hler.

 \item The K\"{a}hler structure on the leaf space of $\mathscr{F}_\xi$ is K\"{a}hler-Einstein with Einstein constant $2n$.
\end{thmlist}
\end{prop}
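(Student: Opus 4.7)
The plan is to prove the two equivalences (i)$\Leftrightarrow$(ii) and (i)$\Leftrightarrow$(iii) separately, by comparing Ricci tensors in two stages: first between the cone $(C(S),\bar g)$ and the Sasaki manifold $(S,g)$, then between $(S,g)$ and the transverse K\"ahler structure on the leaf space of $\mathscr{F}_\xi$.

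For (i)$\Leftrightarrow$(ii), I would apply the standard Ricci formulas for the warped product $\bar g = dr^2 + r^2 g$ on $C(S)$, which has real dimension $2n$. A direct Christoffel-symbol computation (or appeal to general warped-product formulas) gives
\[\overline{\Ric}(\partial_r,\partial_r)=0, \quad \overline{\Ric}(\partial_r,X)=0, \quad \overline{\Ric}(X,Y) = \Ric^g(X,Y) - (2n-2)\,g(X,Y)\]
for $X,Y$ tangent to $S$. Hence $\bar g$ is Ricci-flat if and only if $\Ric^g = (2n-2)g$ on $S$, i.e.\ (i) holds. K\"ahlerness of $\bar g$ is already part of the definition of a Sasaki manifold, so no additional condition on $g$ is needed.

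For (i)$\Leftrightarrow$(iii), I would exploit the fact that the Reeb foliation $\mathscr{F}_\xi$ is a Riemannian foliation with one-dimensional, totally geodesic leaves (the orbits of the unit-length Killing field $\xi$), so $g^T$ is locally the base metric of a Riemannian submersion. From $\xi$ being Killing together with $d\eta(X,Y) = 2g(\Phi X,Y)$ one obtains $\nabla_X\xi = \Phi X$. An O'Neill-type calculation then yields the classical Sasaki Ricci identities
\[\Ric^g(\xi,\xi) = 2n-2, \qquad \Ric^g(X,\xi) = 0 \text{ for } X \in D,\]
\[\Ric^g(X,Y) = \Ric^T(X,Y) - 2\,g^T(X,Y) \text{ for } X,Y \in D,\]
where the $-2g^T$ correction is the square of the O'Neill $A$-tensor of the Reeb submersion. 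Combining these, $\Ric^g = (2n-2)g$ on all of $TS$ is equivalent to $\Ric^T = 2n\,g^T$ on $D$, i.e.\ condition (iii).

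The main obstacle is keeping the numerical constants honest: the factors $2n-2$, $2n$, and $-2g^T$ all depend on the normalizations fixed earlier in the section, namely that $\xi$ has unit length, $\eta = (2d^c\log r)|_{r=1}$, and $d\eta = 2\omega^T$ as in~\eqref{eq:kahler-form}. With different sign or factor-of-$2$ conventions these constants would shift, and the Sasaki Ricci identity in particular requires careful bookkeeping with the $A$-tensor. Once the normalizations are fixed, however, the two curvature comparisons are standard; see the surveys~\cite{BG2,FOW}.
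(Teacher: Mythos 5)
Your proposal is correct and follows essentially the same route as the paper, which simply records that the equivalences are elementary computations and quotes the key identity $\Ric_g(\tilde{X},\tilde{Y})=(\Ric^T-2g^T)(X,Y)$ of (\ref{eq:ricci}) for (i)$\Leftrightarrow$(iii); your warped-product computation for (i)$\Leftrightarrow$(ii) and your O'Neill-type derivation of (\ref{eq:ricci}), together with the identities $\Ric^g(\xi,\xi)=2n-2$ and $\Ric^g(X,\xi)=0$ needed for (iii)$\Rightarrow$(i), are exactly the standard details the paper leaves implicit, with all constants correct for the stated normalizations.
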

This follows from elementary computations.  In particular, the equivalence of (i) and (iii) follows from
\begin{equation}\label{eq:ricci}
 \Ric_g(\tilde{X},\tilde{Y})=(\Ric^T -2g^T)(X,Y),
\end{equation}
where $\tilde{X},\tilde{Y}\in D$ are lifts of $X,Y$ in the local leaf space.

Given a Sasaki structure we can perform a $D$-homothetic transformation to get a new Sasaki structure.  For $a>0$ set
\begin{gather}\label{eq:d-homo}
 \eta'=a\eta,\quad \xi'=\frac{1}{a}\xi,\\
 g'=ag^T +a^2\eta\otimes\eta =ag+(a^2-a)\eta\otimes\eta.
\end{gather}
Then $(g',\xi',\eta',\Phi)$ is a Sasaki structure with the same holomorphic structure on $C(S)$, and with
$r'=r^a$.

\begin{prop}\label{prop:CY-cond}
The following necessary conditions for $S$ to admit a deformation of the transverse K\"{a}hler structure to a Sasaki-Einstein
metric are equivalent.
\begin{thmlist}
 \item $c_1^B =a[d\eta]$ for some positive constant $a$.

 \item $c_1^B >0$, i.e. represented by a positive $(1,1)$-form, and $c_1(D)=0$.

 \item For some positive integer $\ell>0$, the $\ell$-th power of the canonical line bundle
 $\mathbf{K}^{\ell}_{C(S)}$ admits a nowhere vanishing section $\Omega$ with
 $\mathcal{L}_\xi \Omega =i n\Omega$.
\end{thmlist}
\end{prop}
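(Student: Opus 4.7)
The plan is to establish the three-way equivalence in two stages: (i) $\Leftrightarrow$ (ii), which is essentially topological and relies on the Reeb foliation; and (i) $\Leftrightarrow$ (iii), which uses the $\C^*$-equivariant holomorphic structure of the K\"ahler cone. I would first invoke the basic analogue of the Gysin sequence for the Reeb foliation $\mathscr{F}_\xi$ on the compact $K$-contact manifold $S$, which in the relevant range reads
\begin{equation*}
H^0_B(S,\R) \xrightarrow{\wedge [d\eta]} H^2_B(S,\R) \xrightarrow{\iota^*} H^2(S,\R),
\end{equation*}
with $\iota^*$ induced by the inclusion of basic forms. Since $H^0_B(S,\R) = \R$, one has $\ker \iota^* = \R[d\eta]$. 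Under the identification of the transverse holomorphic bundle with $D$ as complex vector bundles, $\iota^* c_1^B = c_1(D)$, while $\iota^*[d\eta] = 0$ because $d\eta$ is globally exact on $S$. Hence $c_1(D) = 0$ is equivalent to $c_1^B = a[d\eta]$ for some $a \in \R$; positivity of $c_1^B$, tested by pairing with the positive basic class $[(d\eta)^{n-1}\wedge\eta]$ (which yields a positive multiple of $a$), then forces $a > 0$, establishing (i) $\Leftrightarrow$ (ii).

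For (iii) $\Rightarrow$ (i), given $\Omega \in H^0(\mathbf{K}^\ell_{C(S)})$ nowhere-vanishing with $\mathcal{L}_\xi \Omega = in\Omega$, I would study $|\Omega|^2_\omega$ with respect to the K\"ahler cone form $\omega = \frac{1}{2}dd^c r^2$. Holomorphicity of $\Omega$, the Reeb-weight condition, and the homogeneity $\mathcal{L}_{r\partial_r}\omega = 2\omega$ together force $|\Omega|^2_\omega = r^{2c}\cdot h$ for a real constant $c$ determined by the weights and a basic function $h$. Using $i\partial\bar\partial \log r = \tfrac{1}{2}d\eta$ and the local Ricci-form identity $\rho = \tfrac{1}{\ell}\, i\partial\bar\partial \log |\Omega|^2_\omega$, one obtains
\begin{equation*}
\rho = \tfrac{c}{\ell}\, d\eta + \tfrac{1}{\ell}\, i\partial_B\bar\partial_B \log h.
\end{equation*}
Combining with the K\"ahler cone identity $\rho = \rho^T - n\, d\eta$ (verified e.g.\ on $\C^n$, where $\rho = 0$ while $\rho^T = 2n\omega^T = n\, d\eta$) yields $[\rho^T] = \bigl(\tfrac{c}{\ell}+n\bigr)[d\eta]$ in $H^2_B(S,\R)$, and the positivity of the Reeb weight of $\Omega$ pins the proportionality constant to be positive, delivering (i).

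For the converse (i) $\Rightarrow$ (iii), which I expect to be the technical heart of the proposition, the topological input $c_1(D) = 0$ in $H^2(S,\R)$ ensures that $\mathbf{K}^\ell_{C(S)}$ is trivial in $H^2(C(S),\Z)$ for some $\ell > 0$ after clearing torsion. The hard step is upgrading this to a holomorphic trivialization whose generator has precise Reeb weight $in$, possibly after a $D$-homothetic adjustment $\xi \mapsto \xi/a$ that arranges the correct proportionality in (i). In the quasi-regular case, $C(S)$ is the $\C^*$-bundle of a line V-bundle $\mathbf{L}$ over the K\"ahler orbifold $Z = S/\langle\xi\rangle$; condition (i) translates via orbifold Chern-class identities to $\mathbf{K}_Z$ being a rational multiple of $\mathbf{L}$ in $H^2(Z,\Q)$, and a nowhere-vanishing $\Omega \in H^0(\mathbf{K}^\ell_{C(S)})$ of the required fiber degree is then built by trivializing an appropriate twist $\mathbf{K}_Z^\ell \otimes \mathbf{L}^{n-\ell}$ on $Z$ and invoking the tautological section of $\pi^*\mathbf{L}$ over the total space. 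The irregular case is reduced to the quasi-regular one by approximating $\xi$ with quasi-regular Reeb vector fields along which condition (i) is preserved, or by the direct transverse-analytic arguments surveyed in~\cite{BG2}.
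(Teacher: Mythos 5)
Your reduction of (i)$\Leftrightarrow$(ii) to the long exact sequence $\cdots\to H^p_B(S)\xrightarrow{\wedge[d\eta]}H^{p+2}_B(S)\to H^{p+2}(S,\R)\to\cdots$ is essentially the proof of Proposition 4.3 of \cite{FOW}, which the paper simply cites, and it is fine apart from a degree slip (to test the sign of $a$ you pair a basic $2$-class on the $(2n-1)$-manifold $S$ against $(d\eta)^{n-2}\wedge\eta$, not $(d\eta)^{n-1}\wedge\eta$). Your sketch of (iii)$\Rightarrow$(i) — $\|\Omega\|^2_\omega=r^{2c}h$ with $h$ basic, $\rho=\frac{1}{\ell}i\partial\ol{\partial}\log\|\Omega\|^2_\omega$, and the cone identity $\rho=\rho^T-n\,d\eta$ — is also sound in outline (this is a direction the paper does not spell out). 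The genuine gap is in (i)$\Rightarrow$(iii), which you correctly call the technical heart but do not actually prove. In the quasi-regular case, condition (i) only gives that $c_1^{\text{orb}}(Z)$ and $c_1(\mathbf{L})$ are proportional over $\R$; to trivialize anything you need that proportionality to be rational, that the corresponding $\mathbf{K}_Z^{q}\otimes\mathbf{L}^{-p}$ is a torsion element of $\Picorb Z$, and that a V-bundle with trivial Chern class is holomorphically trivial (this uses $H^1(Z,\mathcal{O}_Z)=0$); your twist ``$\mathbf{K}_Z^\ell\otimes\mathbf{L}^{n-\ell}$'' is not the right object and the finite-order step is absent. More seriously, the irregular case cannot be dispatched by ``approximating $\xi$ by quasi-regular Reeb fields'': (iii) is a statement about the fixed $\xi$, each approximation changes the foliation and hence the basic cohomology in which (i) is phrased, positivity of the transverse Chern class for the nearby quasi-regular structure is not verified, and no limiting argument is available (the integers $\ell$ may vary and the sections need not converge). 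As written, the implication is not established exactly in the case the proposition is most needed (e.g. Example 2.7).

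The paper's own argument avoids the quasi-regular/irregular dichotomy entirely and is worth adopting: after a $D$-homothety normalizing $[\rho^T]=2n[\tfrac12 d\eta]$ (the same normalization you flag with ``$\xi\mapsto\xi/a$''), the basic $\partial\ol{\partial}$-lemma gives a potential $h$ with $\rho=i\partial\ol{\partial}h$ and $\xi h=r\frac{\partial}{\partial r}h=0$; then $e^h\frac{\omega^n}{n!}$ defines a flat Hermitian metric on $\mathbf{K}_{C(S)}$, a parallel (a priori multivalued) section is holomorphic of unit norm, and a suitable power yields a single-valued nowhere-vanishing $\Omega\in\Gamma(\mathbf{K}^{\ell}_{C(S)})$; the invariance of $h$ and the degree-$2$ homogeneity of $\omega$ then give $\mathcal{L}_{r\frac{\partial}{\partial r}}\Omega=n\Omega$, hence the stated $\xi$-weight. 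If you prefer to keep your quotient route, you must (a) repair the twist and supply the torsion/holomorphic-triviality step on $Z$, and (b) replace the approximation by a construction done once, equivariantly for the closure torus $T^k$ of the Reeb flow, using a single auxiliary quasi-regular $\zeta\in\mathfrak{t}$ and then computing the weight of the resulting section under the original $\xi$ — at which point the flat-metric argument is both shorter and more transparent.
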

\begin{proof}
 Let $\rho$ denote the Ricci form of $(C(S),\ol{g})$, then easy computation shows that
 \begin{equation}\label{eq:ricci-cone}
  \rho =\rho^T - 2n\frac{1}{2}d\eta.
 \end{equation}
If (i) is satisfied, there is a $D$-homothety so that $[\rho^T]= 2n[\frac{1}{2}d\eta]$ as basic classes.
Thus there exists a smooth function $h$ with $\xi h=0=r\frac{\partial}{\partial r}h$ and
\begin{equation}
 \rho =i\partial\ol{\partial}h.
\end{equation}
This implies that $e^h \frac{\omega^{n}}{n!}$, where $\omega$ is the K\"{a}hler form of $\ol{g}$, defines
a flat metric $|\cdot|$ on $\mathbf{K}_{C(S) }$.  Parallel translation defines a multi-valued section which defines
a holomorphic section $\Omega$ of $\mathbf{K}^{\ell}_{C(S)}$ for some integer $\ell>0$ with
$|\Omega |=1$.  Then we have
\begin{equation}\label{eq:hol-form}
 \left(\frac{i}{2}\right)^{n}(-1)^{\frac{n(n-1)}{2}}\Omega\wedge\ol{\Omega} =e^h\frac{1}{n!}\omega^{n}.
\end{equation}
  From the invariance of $h$ and the fact that $\omega$ is homogeneous of degree 2, we see that
  $\mathcal{L}_{r\frac{\partial}{\partial r}}\Omega= n\Omega$.

  The equivalence of (i) and (ii) is easy (cf.~\cite{FOW} Proposition 4.3).
\end{proof}

\subsection{Toric geometry}

In this section we recall the basics of toric Sasaki manifolds.  Much of what follows can be found in
~\cite{MSY} or~\cite{FOW}.

\begin{defn}
 A Sasaki manifold $(S,g)$ of dimension $2n-1$ is \emph{toric} if there is an effective action of an
 $n$-dimensional torus $T=T^{n}$ preserving the Sasaki structure such that the Reeb vector field $\xi$ is an
 element of the Lie algebra $\mathfrak{t}$ of $T$.

 Equivalently, a toric Sasaki manifold is a Sasaki manifold $S$ whose K\"{a}hler cone
 $C(S)$ is a toric K\"{a}hler manifold.
\end{defn}

We have an effective holomorphic action of $T_\C \cong (\C^*)^{n}$ on $C(S)$ whose restriction to
$T\subset T_\C$ preserves the K\"{a}hler form $\omega =d(\frac{1}{2}r^2 \eta)$.  So there is a moment map
\begin{equation}\label{eq:moment-map}
\begin{gathered}
 \mu: C(S) \longrightarrow \mathfrak{t}^* \\
 \langle \mu(x),X\rangle = \frac{1}{2}r^2\eta(X_S (x)),
\end{gathered}
\end{equation}
where $X_S$ denotes the vector field on $C(S)$ induced by $X\in\mathfrak{t}$.  We have the
moment cone defined by
\begin{equation}
 \mathcal{C}(\mu) :=\mu(C(S)) \cup \{0\},
\end{equation}
which from~\cite{Ler} is a strictly convex rational polyhedral cone.  Recall that this means that there are vectors
$u_i,i=1,\ldots,d$ in the integral lattice $\Z_T =\ker\{\exp(2\pi i\cdot):\mathfrak{t}\rightarrow T\}$ such that
\begin{equation}\label{eq:moment-cone}
 \mathcal{C}(\mu) =\bigcap_{j=1}^{d} \{y\in\mathfrak{t}^* : \langle u_j,y\rangle\geq 0\}.
\end{equation}
The condition that $\mathcal{C}(\mu)$ is strictly convex means that it is not contained in any linear subspace of $\mathfrak{t}^*$,
and it is cone over a finite polytope.
We assume that the set of vectors $\{u_j\}$ is minimal in that removing one changes the set defined by
(\ref{eq:moment-cone}).  And we furthermore assume that the vectors $u_j$ are primitive, meaning that
$u_j$ cannot be written as $p\tilde{u}_j$ for $p\in\Z,p>1,$ and $\tilde{u}_j\in\Z_T$.

Let $\Int\mathcal{C}(\mu)$ denote the interior of $\mathcal{C}(\mu)$.  Then the action of $T$ on
$\mu^{-1}(\Int\mathcal{C}(\mu))$ is free and is a Lagrangian torus fibration over $\Int\mathcal{C}(\mu)$.
There is a condition on the $\{u_j\}$ for $\mathcal{S}$ to be a smooth manifold.  Each face
$\mathcal{F}\subset\mathcal{C}(\mu)$ is the intersection of a number of facets
$\{y\in\mathfrak{t}^* :l_j(y)=\langle u_j ,y\rangle =0\}$.  Let $u_{j_1},\ldots,u_{j_a}$ be the corresponding
collection of normal vectors in $\{u_j\}$, where $a$ is the codimension of $\mathcal{F}$.  Then $S$ is smooth, and
the cone $\mathcal{C}(\mu)$ is said to be \emph{non-singular} if and only if
\begin{equation}\label{eq:nonsing}
 \left\{ \sum_{k=1}^{a} \nu_{k} u_{j_k} :\nu_k \in\R\right\}\cap\Z_T =\left\{\sum_{k=1}^{a} \nu_{k} u_{j_k} :\nu_k \in\Z\right\}
\end{equation}
for all faces $\mathcal{F}$.

Note that $\mu(S)=\{y\in\mathcal{C}(\mu) : y(\xi)=\frac{1}{2}\}$.  The hyperplane
$\{y\in\mathfrak{t}^* : y(\xi)=\frac{1}{2}\}$ is the \emph{characteristic hyperplane} of the Sasaki structure.
Consider the dual cone to $\mathcal{C}(\mu)$
\begin{equation}\label{eq:dual-cone}
 \mathcal{C}(\mu)^* =\{\tilde{x}\in\mathfrak{t} :\langle\tilde{x}, y\rangle\geq 0\text{ for all }y\in\mathcal{C}(\mu)\},
\end{equation}
which is also a strictly convex rational polyhedral cone by Farkas' theorem.  Then $\xi$ is in the interior of
$\mathcal{C}(\mu)^*$.  Let $\frac{\partial}{\partial\phi_i},i=1,\ldots, n$ be a basis of $\mathfrak{t}$ in $\Z_T$.
Then we have the identification $\mathfrak{t}^*\cong\mathfrak{t}\cong\R^{n}$ and write
\[ u_j =(u_j^1, \ldots,u_j^{n}),\quad \xi=(\xi^1,\ldots,\xi^{n}). \]
If we set
\begin{equation}\label{eq:symp-coord}
y_i =\langle\mu(x),\frac{\partial}{\partial\phi_i}\rangle\quad ,i=1,\ldots, n,
\end{equation}
then we have symplectic coordinates $(y,\phi)$ on $\mu^{-1}(\Int\mathcal{C}(\mu))\cong \Int\mathcal{C}(\mu)\times T^{n}$.
In these coordinates the symplectic form is
\begin{equation}
 \omega =\sum_{i=1}^{n} dy_i \wedge d\phi_i.
\end{equation}
The K\"{a}hler metric can be seen as in~\cite{Abr} to be of the form
\begin{equation}\label{eq:metric-sym}
 g=\sum_{ij} G_{ij}dy_i dy_j + G^{ij}d\phi_i d\phi_j,
\end{equation}
where $G^{ij}$ is the inverse matrix to $G_{ij}(y)$, and the complex structure is
\begin{equation}\label{eq:comp-st}
 \mathcal{I} =\left\lgroup
\begin{matrix}
 0 & -G^{ij} \\
 G_{ij} & 0 \\
\end{matrix}\right\rgroup
\end{equation}
in the coordinates $(y,\phi)$.  The integrability of $\mathcal{I}$ is $G_{ij,k} =G_{ik,j}$.  Thus
\begin{equation}
 G_{ij}=G_{,ij} :=\frac{\partial^2 G}{\partial y_i \partial y_j},
\end{equation}
for some strictly convex function $G(y)$ on $\Int\mathcal{C}(\mu)$.  We call $G$ the symplectic potential of the
K\"{a}hler metric.

One can construct a canonical K\"{a}hler structure on the cone $X=C(S)$, with a fixed holomorphic structure,
via a simple K\"{a}hler reduction of $\C^d$ (cf.~\cite{Gui1} and~\cite{BurGuiLer} for the singular case).
The symplectic potential of the canonical K\"{a}hler metric is
\begin{equation}
 G^{can} =\frac{1}{2}\sum_{i=1}^{d}l_i (y)\log l_i (y).
\end{equation}
Let
\[ G_\xi =\frac{1}{2}l_{\xi}(y)\log l_{\xi} -\frac{1}{2}l_{\infty}(y)\log l_{\infty}(y),\]
where
\[ l_{\xi}(y)=\langle\xi, y\rangle, \text{ and } l_{\infty}(y)=\sum_{i=1}^d \langle u_i ,y\rangle.\]
Then
\begin{equation}\label{eq:can-pot}
 G_\xi ^{can} =G^{can} + G_\xi,
\end{equation}
defines a symplectic potential of a K\"{a}hler metric on $C(S)$ with induced Reeb vector field $\xi$.
To see this write
\begin{equation}
 \xi =\sum_{i=1}^{n} \xi^i \frac{\partial}{\partial\phi_i},
\end{equation}
and note that the Euler vector field is
\begin{equation}
 r\frac{\partial}{\partial r} =2\sum_{i=1}^{n}y_i\frac{\partial}{\partial y_i}.
\end{equation}
Thus from (\ref{eq:comp-st}) we have
\begin{equation}\label{eq:sym-reeb}
 \xi^i =\sum_{j=1}^{n} 2G_{ij}y_j.
\end{equation}
Computing from (\ref{eq:can-pot}) gives
\begin{equation}\label{eq:can-pot-der}
 \left(G_\xi ^{can} \right)_{ij} =\frac{1}{2}\sum_{k=1}^d \frac{u_k^i u_k^j}{l_k (y)} +\frac{1}{2}\frac{\xi^i \xi^j}{l_\xi (y)}
 -\frac{1}{2}\frac{\sum_{k=1}^d u_k^i \sum_{k=1}^d u_k^j}{l_\infty (y)},
\end{equation}
which gives an explicit formula for the complex structure $\mathcal{I}$ in (\ref{eq:comp-st}) and the metric $g$ in (\ref{eq:metric-sym}).

The general symplectic potential is of the form
\begin{equation}
 G =G^{can} + G_\xi +g,
\end{equation}
where $g$ is a smooth homogeneous degree one function on $\mathcal{C}$ such that $G$ is strictly convex.
The following follows easily from this discussion.
\begin{prop}
 Let $S$ be a compact toric Sasaki manifold and $C(S)$ its K\"{a}hler cone.  For any
 $\xi\in\Int\mathcal{C}(\mu)^*$ there exists a toric K\"{a}hler cone metric, and associated Sasaki structure on $S$,
 with Reeb vector field $\xi$.  And any other such structure is a transverse K\"{a}hler deformation, i.e.
 $\tilde{\eta} =\eta +2d^c \phi$, for a basic function $\phi$.
\end{prop}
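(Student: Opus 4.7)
The plan is to use the explicit symplectic potential of (\ref{eq:can-pot}) for existence and a direct comparison of squared radial functions on $C(S)$ for uniqueness up to transverse deformation.

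For existence, I would show that $G_\xi^{\text{can}} = G^{\text{can}} + G_\xi$ defines a toric K\"{a}hler cone metric with Reeb field $\xi$. Three checks are needed. First, strict convexity of the Hessian (\ref{eq:can-pot-der}) on $\Int\mathcal{C}(\mu)$: the sum $\tfrac{1}{2}\sum_k u_k^i u_k^j / l_k(y)$ is positive definite, since the $\{u_k\}$ span $\mathfrak{t}$ and each $l_k(y)$ is strictly positive in the interior, and the rank-one term $\tfrac{1}{2}\xi^i \xi^j / l_\xi(y)$ is well-defined and nonnegative precisely because $\xi \in \Int\mathcal{C}(\mu)^*$ forces $l_\xi(y)>0$; a Cauchy-Schwarz-type estimate should absorb the negative rank-one correction involving $l_\infty$. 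Second, smoothness of the induced metric across the faces of $\mathcal{C}(\mu)$: this is the standard Guillemin/Delzant K\"{a}hler-reduction picture for $G^{\text{can}}$, which is smooth thanks to (\ref{eq:nonsing}), while the correction $G_\xi$ is harmless (smooth in $\Int\mathcal{C}(\mu)$ and modeled at $l_\infty = 0$ on a tame $l\log l$ singularity compensated by $G^{\text{can}}$). Third, identification of the Reeb field: plugging (\ref{eq:can-pot-der}) into (\ref{eq:sym-reeb}) and using Euler's identity for the linear functions $l_k$ and $l_\xi$, the computation $\sum_j 2(G_\xi^{\text{can}})_{ij} y_j = \xi^i$ drops out.

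For uniqueness, let $\omega = \tfrac{1}{2} dd^c r^2$ and $\tilde\omega = \tfrac{1}{2} dd^c \tilde r^2$ be two toric K\"{a}hler cone metrics on the fixed complex cone $(C(S), J)$ with the same Reeb field $\xi$. Because $\xi$ lies in the fixed Lie algebra $\mathfrak{t}$ acting holomorphically, its extension to $C(S)$ and therefore the Euler vector field $E = -J\xi$ are independent of the metric. The defining identity $E(r^2) = 2r^2 = E(\tilde r^2)$ shows that the ratio $\tilde r^2/r^2$ is $E$-invariant, and $T$-invariance of $r$ and $\tilde r$ makes it also $\xi$-invariant, so it descends to a positive basic function $e^{2\phi}$ on $S$. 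Then $\tilde r = r e^\phi$, and the formula quoted in the paragraph just before Example~\ref{ex:Sasak-st} identifies $\tilde\omega$ as the K\"{a}hler form associated to the transversely deformed Sasaki structure $\tilde\eta = \eta + 2 d^c_B \phi$.

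The main obstacle I anticipate is the convexity step in the existence part: controlling the sign of the negative $l_\infty$ rank-one contribution in (\ref{eq:can-pot-der}) requires using the geometric position of $\xi$ and all the $u_k$ together, rather than a term-by-term estimate. The remaining verifications are either combinatorial facts about the moment cone $\mathcal{C}(\mu)$ and the torus action, or direct application of the transverse deformation formula already recalled in the text.
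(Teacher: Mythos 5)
Your proposal is correct, and the existence half is exactly the paper's implicit argument: the proposition is stated as a consequence of the preceding discussion, i.e.\ of the canonical potential $G^{can}_\xi=G^{can}+G_\xi$ of (\ref{eq:can-pot}), whose Hessian (\ref{eq:can-pot-der}) plugged into (\ref{eq:sym-reeb}) gives Reeb field $\xi$, and whose boundary behaviour is governed by the Guillemin/Delzant condition (\ref{eq:nonsing}). Two remarks on your execution. The convexity step you flag does go through by the Cauchy--Schwarz argument, and the place where $\xi\in\Int\mathcal{C}(\mu)^*$ enters is the equality case: equality forces $v$ proportional to $y$, and then the middle term $\tfrac12\langle\xi,v\rangle^2/l_\xi(y)$ is strictly positive precisely because $l_\xi>0$ on $\Int\mathcal{C}(\mu)$. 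Also, your worry about an $l\log l$ singularity of $G_\xi$ at $l_\infty=0$ is unnecessary: $l_\infty(y)=\sum_k\langle u_k,y\rangle$ and $l_\xi(y)$ are strictly positive on all of $\mathcal{C}(\mu)\setminus\{0\}$ (by strict convexity of the cone and $\xi\in\Int\mathcal{C}(\mu)^*$), so $G_\xi$ is smooth up to the boundary facets away from the apex and only $G^{can}$ carries the prescribed boundary singularity. For uniqueness your route differs slightly from the paper's: the paper works on the symplectic side, where any compatible potential is $G=G^{can}+G_\xi+g$ with $g$ smooth homogeneous of degree one, whereas you fix the complex structure and compare radius functions via the metric-independent Euler field $-J\xi$, getting $\tilde r=re^{\phi}$ with $\phi$ basic and then $\tilde\eta=\eta+2d^c\phi$ from (\ref{eq:cont}). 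Your complex-side argument is cleaner for the statement as phrased (it directly produces the transverse deformation on the fixed holomorphic cone, with no need to invoke the equivariant biholomorphism identifying the varying complex structures of the symplectic picture), while the paper's description buys an explicit parametrization of all such metrics; either is acceptable.
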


Consider now the holomorphic picture of $C(S)$.  Note that the complex structure on $X=C(S)$ is determined up to
biholomorphism by the associated moment polyhedral cone $\mathcal{C}(\mu)$ (cf.~\cite{Abr} Proposition A.1).
And the construction of $X=C(S)$ as in~\cite{Gui1,BurGuiLer} shows that $X=C(S)$ is a toric variety
with open dense orbit $(\C^*)^{n}\cong\mu^{-1}(\Int\mathcal{C})\subset C(S)$.

Recall that a toric variety is characterized by a \emph{fan} (cf.~\cite{Od}).  We give some definitions.
\begin{defn}
A subset $\sigma$ of $\mathfrak{t}\cong\R^n$ is a \emph{strongly convex rational polyhedral cone}, if
there exists a finite number of elements $u_1,u_2,\ldots,u_s$ in $\Z_T \cong\Z^n$
such that
\[\sigma =\{a_1 u_1 +\cdots+a_s u_s :a_i \in\R_{\geq 0}\text{ for }i=1,\ldots,s\},\]
and $\sigma\cap(-\sigma)=\{o\}$.
\end{defn}
\begin{defn}
 A \emph{fan} in $\Z_T \cong\Z^n$ is a nonempty collection $\Delta$  of strongly convex rational polyhedral cones
 in $\mathfrak{t}\cong\R^n$ satisfying the following:
\begin{thmlist}
\item  Every face of any $\sigma\in\Delta$ is contained in $\Delta$.

\item  For any $\sigma,\sigma'\in\Delta$, the intersection $\sigma\cap\sigma'$ is a face of both $\sigma$ and $\sigma'$.
\end{thmlist}
\end{defn}

We denote the dual cone to $\sigma$ by $\sigma^\vee \subset\mathfrak{t}^*$, and define
$\Z_T^\vee =\Hom(\Z_T,Z)$.
If $\sigma$ is a strongly convex rational polyhedral cone, then $U_\sigma =\Spec(\sigma^\vee \cap\Z_T^\vee)$
is an affine variety.  Given a fan $\Delta$ in $\Z_T \cong\Z^n$ the affine varieties $U_\sigma ,\sigma\in\Delta$
glue together to form a normal complex algebraic variety $X_{\Delta}$ with an algebraic action of
$T_{\C}\cong(\C^*)^n$.  Furthermore, there is an open dense orbit isomorphic to
$T_{\C}\cong(\C^*)^n$.  Conversely, if a torus $(\C^*)^n$ acts algebraically on a normal algebraic variety $X$, of
locally finite type over $\C$, with an open dense orbit isomorphic to $(\C^*)^n$, then there is a fan $\Delta$ in
$\Z^n$ with $X$ equivariantly isomorphic to $X_{\Delta}$.  See~\cite{Od} for more details.

There is a fan in $\Z_T \subset\mathfrak{t}$ associated to every strictly convex rational polyhedral set
$\mathcal{C}\subset\mathfrak{t}^*$.  Suppose
\begin{equation}
\mathcal{C}=\bigcap_{j=1}^{d} \{y\in\mathfrak{t}^* : \langle u_j,y\rangle\geq\lambda_j\},
\end{equation}
where $u_j \in\Z_T$ and $\lambda_j \in\R$ for $j=0,\ldots,d$.  Each face $\mathcal{F}\subset\mathcal{C}$ is the
intersection of facets $\{y\in\mathfrak{t}^* : l_{j_k}(y)=\langle u_{j_k},y\rangle-\lambda_{j_k} =0\}\cup\mathcal{C}$
for $k=1,\ldots, a$, where $\{j_1,\ldots,j_a\}\subseteq\{1,\ldots,d\}$, and the codimension of $\mathcal{F}$ is $a$.
Then to the face $\mathcal{F}$ we associate a cone $\sigma_{\mathcal{F}}$ in $\mathfrak{t}\cong\R^n$
\begin{equation}\label{eq:dual-cone-face}
\sigma_{\mathcal{F}} =\{c_1 u_{j_1} +\cdots+c_a u_{j_a} :c_{j_k} \in\R_{\geq 0}\text{ for }k=1,\ldots,a\}.
\end{equation}
It is easy to see that the set of all $\sigma_{\mathcal{F}}$ for faces $\mathcal{F}\subseteq\mathcal{C}$ define
a fan $\Delta$ in $\Z_T$.

Consider the convex polyhedral cone $\mathcal{C}(\mu)$.  From (\ref{eq:moment-cone}) the fan in $\Z_T$ associated to
$\mathcal{C}(\mu)$ consists of the dual cone (\ref{eq:dual-cone}) and all of its faces where
\begin{equation}\label{eq:dual-cone-fan}
\mathcal{C}(\mu)^* =\{c_{1} u_1 +\cdots +c_d u_d : c_k\in\R_{\geq 0}\text{ for }k=1,\ldots,d \}.
\end{equation}
It follows that $C(S)$ is an affine variety as its fan has a single n-dimensional cone.

We introduce logarithmic
coordinates $(z_1,\ldots,z_{n}) =(x_1 +i\phi_1,\ldots, x_{n}+i\phi_{n})$ on
$\C^{n}/{2\pi i\Z^{n}}\cong (\C^*)^{n}\cong\mu^{-1}(\Int\mathcal{C})\subset C(S)$, i.e.
$x_j +i\phi_j =\log w_j$ if $w_j,j=1,\ldots,n$, are the usual coordinates on $(\C^*)^{n}$.
The K\"{a}hler form can be written as
\begin{equation}
 \omega =\mathbf{i}\partial\ol{\partial}F,
\end{equation}
where $F$ is a strictly convex function of $(x_1,\ldots,x_{n})$.  One can check that
\begin{equation}
 F_{ij}(x) =G^{ij}(y),
\end{equation}
where $\mu=y=\frac{\partial F}{\partial x}$ is the moment map.  Furthermore, one can show $x=\frac{\partial G}{\partial y}$, and
the K\"{a}hler and symplectic potentials are related by the Legendre transform
\begin{equation}
 F(x)= \sum_{i=1}^{n} x_i \cdot y_i -G(y).
\end{equation}
It follows from equation (\ref{eq:symp-coord}) defining symplectic coordinates that
\begin{equation}
 F(x)= l_\xi (y) =\frac{r^2}{2}.
\end{equation}

We now consider the conditions in Proposition~\ref{prop:CY-cond} more closely in the toric case.
So suppose the Sasaki structure satisfies Proposition~\ref{prop:CY-cond}, thus we may assume
$c_1^B =2n[\omega^T]$.  Then equation (\ref{eq:ricci-cone}) implies that
\begin{equation}
 \rho =-i\partial\ol{\partial}\log\det(F_{ij})=i\partial\ol{\partial}h,
\end{equation}
with $\xi h=0=r\frac{\partial}{\partial r}h$, and we may assume $h$ is $T^{n}$-invariant.
Since a $T^{n}$-invariant pluriharmonic function is an affine function, we have
constants $\gamma_1,\ldots,\gamma_{n}\in\R$ so that
\begin{equation}\label{eq:hol-CY}
 \log\det(F_{ij})=-2\sum_{i=1}^{n}\gamma_i x_i -h.
\end{equation}
In symplectic coordinates we have
\begin{equation}\label{eq:symp-CY}
 \det(G_{ij})=\exp(2\sum_{i=1}^{n}\gamma_i G_i +h).
\end{equation}
Then from (\ref{eq:can-pot}) one computes the right hand side to get
\begin{equation}
 \det(G_{ij})=\prod_{k=1}^d \left(\frac{l_k (y)}{l_\infty (y)}\right)^{(\gamma,u_k)} (l_\xi (y))^{-n} \exp(h),
\end{equation}
And from (\ref{eq:can-pot-der}) we compute the left hand side of (\ref{eq:symp-CY})
\begin{equation}
 \det(G_{ij})=\prod_{k=1}^d(l_k (y))^{-1} f(y),
\end{equation}
where $f$ is a smooth function on $\mathcal{C}(\mu)$.  Thus $(\gamma, u_k)=-1$, for $k=1,\ldots,d$.
Since $\mathcal{C}(\mu)^*$ is strictly convex, $\gamma$ is a uniquely determined element of $\mathfrak{t}^*$.

Applying $\sum_{j=1}^{m}y_j \frac{\partial}{\partial y_j}$ to (\ref{eq:symp-CY}) and noting that
$\det(G_{ij})$ is homogeneous of degree $-n$ we get
\begin{equation}\label{eq:reeb-const}
 (\gamma,\xi)=-n.
\end{equation}

As in Proposition~\ref{prop:CY-cond} $e^h \det(F_{ij})$ defines a flat metric $\|\cdot\|$ on
$\mathbf{K}_{C(S)}$.  Consider the $(n,0)$-form
\[ \Omega = e^{i\theta}e^{\frac{h}{2}}\det(F_{ij})^{\frac{1}{2}} dz_1 \wedge\cdots\wedge dz_{n}.\]
From equation (\ref{eq:hol-CY}) we have
\[ \Omega =e^{i\theta}\exp(-\sum_{j=1}^{n}\gamma_j x_j)dz_1 \wedge\cdots\wedge dz_{n}.\]
If we set $\theta=-\sum_{j=1}^{n}\gamma_j \phi_j$, then
\begin{equation}\label{eq:hol-form-toric}
 \Omega=e^{-\sum_{j=1}^{n}\gamma_j z_j}dz_1 \wedge\cdots\wedge dz_{n}
\end{equation}
is clearly holomorphic on $U=\mu^{-1}(\Int\mathcal{C})$.  When $\gamma$ is not integral, then we take $\ell\in\Z_+$
such that $\ell\gamma$ is a primitive element of $\Z_T^* \cong\Z^{n}$.  Then
$\Omega^{\otimes\ell}$ is a holomorphic section of $\mathbf{K}^{\ell}_{C(S)}|_U$ which extends to
a holomorphic section of $\mathbf{K}^{\ell}_{C(S)}$ as $\|\Omega\|=1$.

It follows from (\ref{eq:hol-form-toric}) that
\begin{equation}
 \mathcal{L}_\xi \Omega =-i(\gamma,\xi)\Omega =i n\Omega.
\end{equation}
And note that we have equation (\ref{eq:hol-form}) from (\ref{eq:hol-CY}) and (\ref{eq:hol-form-toric}).
We collect these results in the following proposition.

\begin{prop}\label{prop:CY-cond-toric}
Let $\mathcal{S}$ be a compact toric Sasaki manifold of dimension $2n-1$.  Then the conditions of
Proposition~\ref{prop:CY-cond} are equivalent to the existence of $\gamma\in\mathfrak{t}^*$ such that
\begin{thmlist}
 \item $(\gamma, u_k)=-1$, for $k=1,\ldots,d$,

 \item $(\gamma,\xi)=-n$, and

 \item there exists $\ell\in\Z_+$ such that $\ell\gamma\in\Z_T^* \cong\Z^{n}$
\end{thmlist}
Then (\ref{eq:hol-form-toric}) defines a nowhere vanishing section of $\mathbf{K}^{\ell}_{C(S)}$.  And
$C(S)$ is $\ell$-Gorenstein if and only if a $\gamma$ satisfying the above exists.
\end{prop}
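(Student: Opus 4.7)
The plan is to establish the equivalence in two directions, leveraging most of the computation already laid out in the discussion preceding the statement; the proposition is essentially a repackaging of that derivation into invariant data $\gamma \in \mathfrak{t}^*$.

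For the forward direction, assume the conditions of Proposition~\ref{prop:CY-cond}. After a $D$-homothety we may take $c_1^B = 2n[\omega^T]$ and write the Ricci form of the cone as $\rho = i\partial\bar{\partial} h$ with $\xi h = r\tfrac{\partial}{\partial r} h = 0$. By averaging over the torus we may take $h$ to be $T^n$-invariant. Since a $T^n$-invariant pluriharmonic function on the open orbit is affine in $(x_1,\ldots,x_n)$, relation (\ref{eq:hol-CY}) holds for a unique vector $\gamma = (\gamma_1,\ldots,\gamma_n) \in \mathfrak{t}^*$. Converting to symplectic coordinates gives (\ref{eq:symp-CY}), and expanding both sides using the canonical potential (\ref{eq:can-pot-der}) yields
\[
\prod_{k=1}^d l_k(y)^{(\gamma,u_k)+1} = l_\xi(y)^n \cdot (\text{smooth, nowhere zero on }\mathcal{C}(\mu)).
\]
Comparing the orders of vanishing along each facet $\{l_k = 0\}$ of $\mathcal{C}(\mu)$ forces $(\gamma, u_k) = -1$, giving (i); uniqueness of $\gamma$ also follows since $\{u_k\}$ spans $\mathfrak{t}$ by strict convexity of $\mathcal{C}(\mu)^*$. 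Applying $\sum_j y_j \tfrac{\partial}{\partial y_j}$ to (\ref{eq:symp-CY}), and using that $\det(G_{ij})$ is homogeneous of degree $-n$ together with (\ref{eq:sym-reeb}), produces $(\gamma,\xi) = -n$, which is (ii). Finally, the existence of a holomorphic section $\Omega$ of $\mathbf{K}^\ell_{C(S)}$ supplied by Proposition~\ref{prop:CY-cond} (iii) is expressed in logarithmic coordinates by (\ref{eq:hol-form-toric}); single-valuedness of $\Omega^{\otimes\ell}$ under the lattice translations $\phi_j \mapsto \phi_j + 2\pi$ forces $\ell\gamma \in \Z_T^*$, which is (iii).

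For the converse, given $\gamma$ with (i)--(iii), define $\Omega$ by (\ref{eq:hol-form-toric}) on the open orbit $U = \mu^{-1}(\Int \mathcal{C})$. Condition (iii) ensures $\Omega^{\otimes \ell}$ is invariant under the lattice and hence descends to a holomorphic section of $\mathbf{K}^\ell_{C(S)}|_U$. One then checks that the norm of $\Omega^{\otimes \ell}$ with respect to the flat Hermitian metric on $\mathbf{K}_{C(S)}$ coming from the cone K\"ahler structure is constant (this uses (i) to match the factors $l_k(y)$ in both sides of (\ref{eq:hol-form})), so $\Omega^{\otimes \ell}$ is bounded and extends holomorphically across the codimension $\geq 2$ subset $C(S) \setminus U$ by the Riemann extension theorem, nowhere vanishing. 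Finally, (ii) together with (\ref{eq:hol-form-toric}) gives directly
\[
\mathcal{L}_\xi \Omega = -i(\gamma, \xi)\Omega = in\, \Omega,
\]
so condition (iii) of Proposition~\ref{prop:CY-cond} holds. The last assertion, that $C(S)$ is $\ell$-Gorenstein iff such a $\gamma$ exists, is then immediate since $\ell$-Gorenstein means precisely that $\mathbf{K}^\ell_{C(S)}$ admits a nowhere-vanishing section.

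The only real subtlety is bookkeeping: confirming that the orders of vanishing of $\det(G_{ij})$ along the facets of $\mathcal{C}(\mu)$ match exactly what (\ref{eq:can-pot-der}) predicts (so that (i) reads off cleanly), and confirming single-valuedness of $\Omega^{\otimes \ell}$ under the lattice. Both are direct from the explicit formulas already assembled, so no serious analytic obstacle remains.
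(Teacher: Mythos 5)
Your forward direction is essentially the paper's own argument (the derivation preceding the proposition): condition (i) from matching the orders of vanishing of $\det(G_{ij})$ along the facets of $\mathcal{C}(\mu)$, condition (ii) from homogeneity of degree $-n$, and the uniqueness of $\gamma$ from strict convexity. The one deviation -- deducing (iii) from single-valuedness of the section furnished by Proposition~\ref{prop:CY-cond}(iii) -- quietly assumes that section is of the form (\ref{eq:hol-form-toric}) on the open orbit, which you do not justify; the paper gets (iii) for free because (i) is an integral linear system (the $u_k\in\Z_T$ span $\mathfrak{t}$), so $\gamma$ is automatically rational and some multiple $\ell\gamma$ lies in $\Z_T^*$.

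The genuine gap is in the converse. First, $C(S)\setminus U$ is \emph{not} of codimension $\geq 2$: it is the union of the toric divisors corresponding to the rays $u_1,\dots,u_d$ (compare $\C^n\setminus(\C^*)^n$), so the codimension-two Riemann extension theorem is unavailable and the extension must be argued across divisors. Second, the norm of $\Omega$ with respect to the Hermitian metric on $\mathbf{K}_{C(S)}$ induced by the cone K\"ahler structure is not constant: it equals $e^{h/2}$ with $h$ defined by (\ref{eq:hol-CY}); constancy holds only for the twisted metric $e^{h}\det(F_{ij})$, and in the converse direction you cannot invoke that "flat metric" without circularity -- that it extends positively over the divisors is precisely what has to be shown. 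Moreover, boundedness alone would give extension but not the asserted non-vanishing; a two-sided bound is needed. The repair is either (a) carry out the facet bookkeeping you defer: near $\{l_k=0\}$ one has $x=\tfrac12 u_k\log l_k(y)+O(1)$ and $\det(F_{ij})=l_k\cdot(\text{positive smooth})$, so (i) makes $e^{-2(\gamma,x)}\det(F_{ij})^{-1}$ bounded above and below near each divisor, yielding extension and non-vanishing of $\Omega^{\otimes\ell}$; or, cleaner, (b) avoid metrics altogether and use the standard toric computation (Oda, Prop.\ 2.1, cited later in the paper): on the smooth locus the divisor of $w^{-\ell\gamma}\bigl(\tfrac{dw_1}{w_1}\wedge\cdots\wedge\tfrac{dw_n}{w_n}\bigr)^{\otimes\ell}$ is $\ell\sum_k\bigl(-(\gamma,u_k)-1\bigr)D_k=0$ by (i), so $\Omega^{\otimes\ell}$ is a regular nowhere-vanishing section of $\mathbf{K}^{\ell}_{C(S)}$, and (ii) then gives $\mathcal{L}_\xi\Omega=in\Omega$, i.e.\ Proposition~\ref{prop:CY-cond}(iii). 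With either repair your overall strategy goes through.
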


We will need the beautiful results of A. Futaki, H. Ono, and G. Wang on the existence of
Sasaki-Einstein metrics on toric Sasaki manifolds.
\begin{thm}[\cite{FOW,CFO}]\label{thm:FOW}
Suppose $S$ is a toric Sasaki manifold satisfying Proposition~\ref{prop:CY-cond-toric}.
Then we can deform the Sasaki structure by varying the Reeb vector field and then performing
a transverse K\"{a}hler deformation to a Sasaki-Einstein metric.  The Reeb vector field and transverse
K\"{a}hler deformation are unique up to isomorphism.
\end{thm}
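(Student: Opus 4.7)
The plan is to prove existence in two stages and then handle uniqueness separately: first identify the correct Reeb vector field by minimizing a volume functional on the admissible slice, and second solve a transverse complex Monge--Amp\`ere equation via the continuity method. The main difficulty will be the $C^0$ estimate in the second stage, which exploits toric symmetry through a Wang--Zhu type argument.

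For the first stage, I would fix the affine slice of admissible Reeb vectors
\begin{equation*}
\Sigma := \{\xi \in \Int\mathcal{C}(\mu)^* : (\gamma,\xi) = -n\}
\end{equation*}
determined by Proposition~\ref{prop:CY-cond-toric}(ii), and introduce the Sasaki volume functional $V : \Sigma \to \R_{>0}$ of Martelli--Sparks--Yau. Using the symplectic description (\ref{eq:metric-sym}) and the canonical potential (\ref{eq:can-pot}), $V(\xi)$ reduces to an integral over the characteristic polytope $\{y \in \mathcal{C}(\mu) : \langle\xi,y\rangle = 1/2\}$, from which strict convexity and coercivity at the boundary of $\Sigma$ can be verified by direct computation. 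Hence $V$ admits a unique critical point $\xi_0 \in \Sigma$. The key observation is that $dV|_{\xi_0}$ is a positive multiple of the transverse Futaki invariant for the Sasaki structure with Reeb field $\xi_0$, so the classical Futaki-type obstruction to a transverse K\"{a}hler--Einstein metric vanishes automatically at this preferred $\xi_0$.

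For the second stage, fix $\xi = \xi_0$ and seek a basic function $\phi \in C^\infty_B(S)$ such that the deformed transverse K\"{a}hler form $\tilde\omega^T = \omega^T + i\partial_B \ol{\partial}_B \phi$ is K\"{a}hler--Einstein with constant $2n$. By combining (\ref{eq:hol-CY}) and (\ref{eq:symp-CY}) this is equivalent to a transverse Monge--Amp\`ere equation $\det(F_{ij} + \phi_{ij}) = e^{-\phi + h}\det(F_{ij})$, which I would embed in the continuity family obtained by multiplying $\phi$ in the exponent by $t \in [0,1]$. Openness for $t < 1$ is standard: the linearization is $\Delta_B + t$, invertible on basic functions modulo constants via the transverse Bochner formula combined with (\ref{eq:ricci}). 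Closedness reduces to $C^0$, $C^2$, and $C^{2,\alpha}$ a priori estimates for basic $\phi$; the latter two are routine once $C^0$ is known (a Yau--Calabi third-order bound for $C^2$, then Evans--Krylov applied leafwise for $C^{2,\alpha}$, followed by Schauder bootstrapping).

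The main obstacle is the $C^0$ estimate at the endpoint $t = 1$. Exploiting the full $T^n$-symmetry, $\phi$ may be assumed $T^n$-invariant, hence corresponds in symplectic coordinates to a convex perturbation of the canonical symplectic potential $G^{can}_\xi$ on $\Int\mathcal{C}(\mu)$. Adapting the Wang--Zhu argument from the toric K\"{a}hler--Einstein setting to the transverse Sasaki slice, the vanishing of the transverse Futaki invariant at $\xi_0$ established in Stage~1 provides exactly the integrated barycenter-type identity needed to run a Moser iteration on the transverse polytope and bound $\|\phi\|_{C^0}$. For uniqueness: strict convexity of $V$ pins down $\xi_0 \in \Sigma$ up to the toric automorphism group, and uniqueness of the transverse K\"{a}hler--Einstein deformation for fixed $\xi_0$ follows from a Bando--Mabuchi argument transplanted to the basic Dolbeault complex, using positivity of the transverse Ricci form via (\ref{eq:ricci}) together with the basic $\partial_B\ol{\partial}_B$-lemma recorded earlier.
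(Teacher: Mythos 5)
The paper does not actually prove this theorem: it is imported from \cite{FOW} and \cite{CFO}, and the paragraph following the statement sketches precisely the strategy you propose --- a Wang--Zhu-type argument \cite{WaZh} for the transverse Monge--Amp\`ere problem, the Martelli--Sparks--Yau volume minimization \cite{MSY} identifying the unique Reeb field on the slice $(\gamma,\xi)=-n$ at which the transverse Futaki invariant vanishes, and uniqueness via convexity of the volume functional together with a Bando--Mabuchi-type argument in the basic complex. Your only deviation is organizational: you solve the transverse K\"ahler--Einstein equation directly at the volume-minimizing Reeb field using the barycenter condition for the $C^0$ estimate, rather than first establishing the general transverse K\"ahler--Ricci soliton as in \cite{FOW} and then specializing; this is the same circle of ideas and your plan is sound.
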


In~\cite{FOW} a more general result is proved.  It is proved that a compact toric Sasaki
manifold satisfying Proposition~\ref{prop:CY-cond-toric} has a transverse K\"{a}hler deformation
to a Sasaki structure satisfying the transverse K\"{a}hler Ricci soliton equation:
\[ \rho^T - 2n\omega^T =\mathcal{L}_X \omega^T \]
for some Hamiltonian holomorphic vector field $X$.  The analogous result for toric Fano manifolds was
proved in~\cite{WaZh}.  A transverse K\"{a}hler Ricci soliton becomes a transverse K\"{a}hler-Einstein
metric, i.e. $X=0$, if the Futaki invariant $f_1$ of the transverse K\"{a}hler structure vanishes.
The invariant $f_1$ depends only on the Reeb vector field $\xi$.  The next step is to use a
volume minimization argument due to Martelli-Sparks-Yau~\cite{MSY} to show there is a unique $\xi$
satisfying (\ref{eq:reeb-const}) for which $f_1$ vanishes.

\begin{xpl}\label{xpl:two-points}
Let $M=\cps^2_{(2)}$ be the two-points blow up.  And Let $S \subset\mathbf{K}_M$ be the
$U(1)$-subbundle of the canonical bundle.  Then the standard Sasaki structure on $S$ as in Example~\ref{ex:Sasak-st}
satisfies (i) of Proposition~\ref{prop:CY-cond}, and it is not difficult to show that $S$
is simply connected and is toric.
But the automorphism group of $M$ is not reductive, thus $M$ does not admit a K\"{a}hler-Einstein metric
due to Y. Matsushima~\cite{Mat}.  Thus there is no Sasaki-Einstein structure with the usual Reeb vector field.
But by Theorem~\ref{thm:FOW} there is a Sasaki-Einstein structure with a different Reeb vector field.

The vectors defining the facets of $\mathcal{C}(\mu)$ are
\[u_1 =(0,0,1), u_2 =(0,1,1) ,u_3 =(1,2,1) ,u_4 =(2,1,1) ,u_5 =(1,0,1).\]
The Reeb vector field of the toric Sasaki-Einstein metric on $S$ was calculated in~\cite{MSY} to be
\[\xi =\left(\frac{9}{16}(-1+\sqrt{33}), \frac{9}{16}(-1+\sqrt{33}), 3\right).\]
One sees that the Sasaki structure is irregular with the closure of the generic orbit being a two torus.
\end{xpl}

\section{Resolutions}\label{sec:resolutions}

\subsection{Embeddings of K\"{a}hler cones}

In this section we will give a proof of the version of Kodaira-Nakano embedding appropriate to
K\"{a}hler cones and Sasaki manifolds.  There is a version due to W. Baily~\cite{Bai} giving an embedding of a
K\"{a}hler orbifold with a positive line V-bundle into projective space, $Z\hookrightarrow\cps^n$.
But even for a quasi-regular Sasaki manifolds $S$, this does not give a satisfying embedding.
The embedding $Z\hookrightarrow\cps^n$ does not respect the orbifold structure, so it
does not lift to an embedding $S\hookrightarrow S^{2n+1}$.  The appropriate embedding theorem for Sasaki
manifolds is due to L. Ornea and M. Verbitsky~\cite{OrVer}.  Their proof follows from a more general embedding
theorem for Viasman manifolds.  We give a proof more tailored to our context.

\begin{thm}\label{thm:embed}
Let $S$ be a compact Sasaki manifold with $H^1(S,\R)=0$.  There is a weighted Sasaki structure on the sphere $S_{\mathbf{w}}^{2n+1}$
for some $n>0$ and a CR-embedding $\iota: S\hookrightarrow S_{\mathbf{w}}^{2n+1}$.  The corresponding embedding of K\"{a}hler cones
$\phi: C(S)\cup\{o\}\hookrightarrow\C^{n+1}$ is holomorphic onto an affine subvariety of $\C^{n+1}$.

If $S$ is quasi-regular with associated K\"{a}hler orbifold $Z$, then we may assume that the Reeb vector field $\xi$ on $S$ is the
restriction of the Reeb vector field $\xi_{\mathbf{w}}$ on $S_{\mathbf{w}}^{2n+1}$.
And we have the commutative diagram
\begin{equation}\label{eq:Sasak-emb}
 \begin{CD}
  S @>\tau>> S^{2n+1}_{\mathbf{w}} \\
  @VV{\pi}V   @VVV\\
  Z @>\ol{\tau}>> \cps(\mathbf{w})
 \end{CD}
\end{equation}
where both rows are embeddings, $\ol{\iota}$ as complex orbifolds.  Furthermore, in this case by applying a transversal
K\"{a}hler deformation to $S^{2n+1}_{\mathbf{w}}$ the embeddings $\tau$ and $\ol{\iota}$ can be made to respect Sasaki and
K\"{a}hler structures respectively.
\end{thm}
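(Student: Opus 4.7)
The strategy is to reduce to the quasi-regular case, where the leaf space $Z = S/\mathscr{F}_\xi$ is a compact K\"ahler orbifold, and then to apply an orbifold version of the Kodaira embedding into a weighted projective space. Let $T^k \subset \Isom(S,g)$ be the torus obtained as the closure of the one-parameter subgroup generated by $\xi$; its Lie algebra $\mathfrak{t}$ acts by CR-automorphisms of $(D,J)$, and rational elements of $\mathfrak{t}$ transverse to $D$ are dense in the Sasaki cone. Each such element $\xi'$ determines a quasi-regular Sasaki structure on $S$ with the same CR structure $(D,J)$, and as the Stein filling of $(S,D,J)$ the complex variety $X = C(S)\cup\{o\}$ is unaffected by this deformation (the Reeb choice only selects the induced $\C^*$-action on $X$). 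It therefore suffices to prove both the CR-embedding of $S$ and the holomorphic embedding of $X$ assuming $S$ is quasi-regular, with the Sasaki-compatibility statement and diagram~(\ref{eq:Sasak-emb}) appearing only in that case.

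In the quasi-regular case the orbifold $Z$ carries a negative holomorphic line V-bundle $\mathbf{L}^{-1}$ whose unit circle bundle is $S$, as in Example~\ref{ex:Sasak-st}. For $k$ sufficiently large, Baily's orbifold Kodaira vanishing and generation theorems, together with the hypothesis $H^1(S,\R)=0$ (which forces the relevant basic $H^{0,1}$ to vanish and makes Baily's results directly applicable to $\mathbf{L}^{-k}$), ensure that $H^0(Z,\mathbf{L}^{-k})$ separates points and tangent directions of the coarse complex space of $Z$. To upgrade this to an \emph{orbifold} embedding $\ol{\tau} : Z \hookrightarrow \cps(\mathbf{w})$, one chooses integer weights $\mathbf{w}=(w_0,\ldots,w_n)$ and sections $s_i \in H^0(Z,\mathbf{L}^{-w_i})$ so that at every orbifold point $p\in Z$ with isotropy group $\Gamma_p$ the joint vanishing orders of the $s_i$, read in a local uniformising chart, realise a $\Gamma_p$-equivariant immersion of a neighbourhood of $p$ into a chart of $\cps(\mathbf{w})$. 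A standard jet-separation argument, using vanishing of $H^1$ with coefficients in high powers of $\mathbf{L}^{-1}$ twisted by appropriate ideal sheaves, produces such sections.

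Each chosen section $s_i$ pulls back to a holomorphic function $\tilde{s}_i$ on the total space $|\mathbf{L}^{-1}| \supset C(S)\cup\{o\}$ that is weighted homogeneous of degree $w_i$ under the fibre $\C^*$-action, equivalently under the complexified Reeb field. The map
\[
\phi : C(S)\cup\{o\} \longrightarrow \C^{n+1}, \qquad \phi = (\tilde{s}_0,\ldots,\tilde{s}_n),
\]
sends the apex to the origin and intertwines the Euler field on $C(S)$ with the weighted Euler field $\sum_i w_i z_i \partial_{z_i}$ on $\C^{n+1}$, so its restriction to the links furnishes a CR-embedding $\iota : S \hookrightarrow S^{2n+1}_{\mathbf{w}}$ carrying $\xi$ to the restriction of $\xi_{\mathbf{w}}$; injectivity and immersivity of $\phi$ follow from those of $\ol{\tau}$ together with the weighted homogeneity of the $\tilde{s}_i$ along the $\C^*$-fibres. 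The image $\phi(C(S)\cup\{o\})$ is a closed analytic subset of $\C^{n+1}$ invariant under the weighted $\C^*$-action, so its projectivisation $\ol{\tau}(Z)$ is a closed analytic subset of the compact weighted projective space $\cps(\mathbf{w})$, hence algebraic by Chow's theorem, whence $\phi(C(S)\cup\{o\})$ is an affine subvariety. A final rescaling of $r_{\mathbf{w}}$ on $S^{2n+1}_{\mathbf{w}}$, that is, a transverse K\"ahler deformation of the type described after~(\ref{eq:kahler-form}), identifies the two Sasaki structures along the image.

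The principal obstacle is producing an \emph{orbifold} embedding rather than an embedding of the coarse complex space: one must control the jets of the $s_i$ at each orbifold stratum so that the local map is $\Gamma_p$-equivariant and an immersion on the quotient, and then package these pointwise requirements into a single weight vector $\mathbf{w}$ compatible with Baily's ampleness. Every other ingredient---the reduction to the quasi-regular case, the passage from orbifold sections to weighted-homogeneous functions on $C(S)$, the algebraicity of the image, and the concluding transverse K\"ahler deformation---is routine once this orbifold refinement of Baily's theorem has been established.
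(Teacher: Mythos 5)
Your proposal has a genuine gap at its core, and it sits exactly where you yourself flag the ``principal obstacle.'' Your route runs through an \emph{orbifold refinement} of Baily's embedding theorem: choosing sections $s_i\in H^0(Z,\mathbf{L}^{-w_i})$ whose jets at every orbifold point realize a $\Gamma_p$-equivariant immersion into a chart of $\cps(\mathbf{w})$. But this is precisely what the paper points out Baily's theorem does \emph{not} give: Baily embeds only the underlying complex space into projective space, the embedding does not respect the orbifold structure, and hence does not lift to an embedding of the links. Producing weighted-homogeneous sections that are equivariantly immersive along every singular stratum, packaged into a single weight vector $\mathbf{w}$, is the entire content of the theorem, not a routine ``jet-separation argument,'' so as written your proof assumes the hard part. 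The paper avoids this issue altogether: it realizes $X=C(S)\cup\{o\}$ as a normal Stein space via Remmert reduction of the $1$-convex total space of $\mathbf{L}$, embeds a $T^k$-invariant neighborhood of $o$ into some $\C^d$ (finite embedding dimension of the germ), decomposes holomorphic functions into torus weight components via the convergent expansion (\ref{eq:weig-Laur}), and selects finitely many weight-homogeneous components $\tau_j$ with $(d\tau)_o$ surjective; positivity of the weights under the contracting $\C^*$-action then lets these extend to an equivariant embedding $\tau:X\to\C^{n+1}$, with algebraicity of the image by the Chow-type weight expansion you also use. No orbifold line-bundle sections are needed at any point.

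There is a second, smaller gap at the end: matching the Sasaki structures is not ``a final rescaling of $r_{\mathbf{w}}$.'' The weighted K\"ahler form $\omega_{\mathbf{w}}$ restricted to the image $Z'$ differs from the transverse K\"ahler form $\omega$ of $S$ by $\frac{1}{2}d\alpha$ with $\alpha$ basic, and one must produce a transversal K\"ahler deformation of the \emph{ambient} $S^{2n+1}_{\mathbf{w}}$ whose restriction to $Z'$ is exactly $\omega$. The paper does this with Demailly's extension theorem (Theorem~\ref{thm:Dem}) to modify $\omega_{\mathbf{w}}$ within its class so that it restricts correctly, and only then uses $H^1(S,\R)=0$ to write the remaining closed difference $\eta-\eta'_{\mathbf{w}}|_{S'}$ as $dh$ for a basic $h$ that extends to the sphere. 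Relatedly, your use of $H^1(S,\R)=0$ to feed a Baily-type vanishing of basic $H^{0,1}$ misplaces the hypothesis: in the paper it enters only at this last matching step, which is why the holomorphic embedding of the cone survives without it while the exact Sasaki compatibility may fail.
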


If $H^1(S,\R)\neq 0$, then the proof still gives an holomorphic embedding $\phi: C(S)\cup\{o\}\hookrightarrow\C^{n+1}$.
But the part of the proof giving a CR-embedding may fail.

Note that the embedding $\tilde{\iota}:Z\hookrightarrow\cps(\mathbf{w})$ in (\ref{eq:Sasak-emb}) is of independent interest,
since it gives an orbifold embedding.  The singularities of $Z$ are all inherited from those of $\cps(\mathbf{w})$ as a subvariety.
The Baily embedding theorem merely gives an analytic embedding.

It is also interesting that Theorem~\ref{thm:embed} gives an alternative version of the Kodaira-Nakano embedding; although it
gives an embedding into a weighted projective space.
If $\mathbf{L}$ is a positive holomorphic bundle on a complex manifold $Z$, then the $U(1)$-subbundle of $\mathbf{L}^*$
has a regular Sasaki structure.  Thus (\ref{eq:Sasak-emb}) gives an embedding $\tilde{\iota}:Z\rightarrow\cps(\mathbf{w})$
whose image is disjoint from the orbifold singular set of $\cps(\mathbf{w})$.

\begin{proof}
Note that \textit{a priori} a K\"{a}hler cone $C(S)$ does not contain the vertex, but
$X=C(S)\cup\{o\}$ can be made into a complex space in a unique way.  The Reeb vector field $\xi$ generates
a 1-parameter subgroup of the automorphism group $\Aut(S)$ of the Sasaki manifold $S$.  Since $\Aut(S)$ is compact,
the closure of this subgroup is a torus $T^k \subset\Aut(S)$ where  $k=\rank(S)$.  Choose a vector field $\zeta$ in the
integral lattice of the Lie algebra $\mathfrak{t}$ of $T^k$, $\zeta\in\Z_T \subset\mathfrak{t}$, and such that $\eta(\zeta)>0$ on $S$.
Then as in~\cite{BGS1} on can show there is a unique Sasaki structure $(\tilde{g},\zeta,\tilde{\eta},\tilde{\Phi})$
with the same CR-structure and Reeb vector field $\zeta$.  The $U(1)$-action on $S$ generated by $\zeta$ extends to
an holomorphic $\C^*$-action on $C(S)$.  Then the quotient $C(S)/\C^* = S/ U(1)$ is a K\"{a}hler orbifold $Z$, and
$C(S)$ is the total space, minus the zero section, of an orbifold bundle $\pi:\mathbf{L}\rightarrow Z$
(cf.~\cite{BG}).  The bundle $\mathbf{L}$ is negative.  And there is an Hermitian metric $h$ on $\mathbf{L}$, so that
$\tilde{r}^2 =h|z|^2$, where $z$ is a local fiber coordinate, is the K\"{a}hler potential on $C(S)$ for the
Sasaki structure $(\tilde{g},\zeta,\tilde{\eta},\tilde{\Phi})$.

Let $W$ be the total space of $\mathbf{L}$.  Then $\tilde{r}^2$ is strictly
plurisubharmonic away from $Z\subset W$, and hence $W$ is a 1-convex space.  In other words, $W$ is exhausted by
strictly pseudo-convex domains $\{\tilde{r}^2 <c \}\subset W$ for $c>0$.  Then as in~\cite{Gra} $W$ is holomorphically
convex, and we have the Remmert reduction of $W$.  That is, there exists a Stein space $X$ and an holomorphic map
$\sigma: W\rightarrow X$, which contracts the maximal compact analytic set $Z\subset W$ and is a biholomorphism
outside $Z$.  Thus $X=C(S)\cup\{o\}$ is a normal complex space.  And the Riemann extension theorem
shows $i_* \mathcal{O}_{C(S)} =\mathcal{O}_X$, where $i: C(S)\rightarrow X$ is the inclusion.  Thus
$X$ is independent of the above choices.    And if $\pi:Y\rightarrow X$ is any
resolution of $o\in X$, then $Y$ is 1-convex.

The torus $T^k \subset\Aut(S), k=\rank(S),$ acts by holomorphic isometries on $X=C(S)\cup\{o\}$.  Let $U\subset X$ be a
$T^k$-invariant neighborhood of $o\in X$.  We can split $f\in\mathcal{O}(U)$ into its weight space components as follows.
Let $(a_1,\ldots,a_k)\in\Z^k$ and $t=(t_1,\ldots,t_k)\in T^k$.  Define
\begin{equation}
f_{(a_1,\ldots,a_k)}(z):=\frac{1}{(2\pi)^k}\int_{T^k} e^{-i\sum_{j=1}^k a_j t_j} f(t\cdot z)\, dt.
\end{equation}
We will show that
\begin{equation}\label{eq:weig-Laur}
f(z) =\sum_{(a_1,\ldots,a_k)\in\Z^k} f_{(a_1,\ldots,a_k)}(z),
\end{equation}
with the series on the right converging uniformly on compact subsets of $U\subset X$.
Let $x\in U\setminus\{o\}$, and let $B\subset\C^m$ be a ball mapped holomorphically $\beta:B\rightarrow U\setminus\{o\}$ with $\beta(0)=x$
transversal to the orbit of $(\C^*)^k$ through $x$.  Then $\psi:B\times(\C^*)^k \rightarrow X,\ \psi(z,t)=t\cdot\beta(z)$ maps
a neighborhood $D$ of $\{0\}\times T^k$ in $B\times(\C^*)^k$ onto a neighborhood of $x\in U\setminus\{o\}$.
And pulled back to $D$ (\ref{eq:weig-Laur}) is easily seen to be a Laurent expansion in k-coordinates which is known
to converge uniformly on compact subsets.  Thus (\ref{eq:weig-Laur}) converges uniformly on compact subsets of $U\setminus\{o\}$,
and it converges uniformly on compact subsets of $U$ by the maximum modulus theorem for analytic varieties~\cite[III B, Theorem 16]{GunRos}.

Now let $(f_1,\ldots,f_d): U\rightarrow\C^d$ be an embedding of a $T^k$-invariant neighborhood $o\in U$.  Here one can take $d$
to be the embedding dimension $\rank_o \Omega^1_X =\dim_{\C} m_o/m_o^2$, where $m_o$ is the maximal ideal at $o\in X$.
For the definition of the sheaf $\Omega^1_X$ for analytic spaces see~\cite[Ch. II]{GraPetRem}.
Then take sufficiently many components $(f_j)_{(a_1,\ldots,a_k)},\ j=1,\ldots,d, \ (a_1,\ldots,a_k)\in\Z^k$ to be the components of $\tau=(\tau_0,\ldots,\tau_n)$ so that $(d\tau)_o :\Omega^1_{\C^{n+1},0}\rightarrow\Omega^1_{U,o}$ is surjective.
As in the smooth case, an holomorphic map $\tau=(\tau_0,\ldots,\tau_n):U\rightarrow\C^{n+1}$ is an immersion at $o\in U$ if
$(d\tau)_o :\Omega^1_{\C^{n+1},0}\rightarrow\Omega^1_{U,o}$ is surjective.

For simplicity, denote the above quasi-regular Sasaki structure by $(g,\xi,\eta,\Phi)$.
The Reeb vector field $\xi$ of this Sasaki structure induces an holomorphic vector
field $-J\zeta -i\zeta$ whose action gives a 1-parameter subgroup $\gamma:\C^* \rightarrow T^k_{\C} =(\C^*)^k$ which can be
characterized by $b=(b_1,\ldots,b_k)\in\Z_T$.  By construction $\tau_j$ is in the weight space with weight $a(j)\in\Z^k$, so
$\gamma$ acts on $\tau_j$ with weight $w_j=\langle a(j),b\rangle$.  Since $\underset{t\rightarrow 0}{\lim}\gamma(t)\cdot x=o$
for $x\in X$, and $\tau_j (o)=0$, we have $w_j >0$.   And the weights $\mathbf{w}=(w_0,\ldots, w_n )\in(\Z_+)^{n+1}$ define a 1-parameter
subgroup $\gamma_{\mathbf{w}}:\C^* \rightarrow (\C^*)^{n+1}$ acting on $\C^{n+1}$.  Since $\gamma(t)$, for $t$ sufficiently close
to $0$, maps any compact subset of $X$ into $U$, each $\tau_j$ extends to $X$.  And a similar argument shows that
$\tau:X\rightarrow\C^{n+1}$ is an equivariant embedding.  Quotienting by $\gamma$ and $\gamma_{\mathbf{w}}$ gives $\ol{\tau}$ in
(\ref{eq:Sasak-emb}).

An argument similar to the proof of Chow's theorem can be used to prove that $V=\tau(X)\subset\C^{n+1}$ is an affine variety.
Let $f\in\mathcal{O}_{\C^{n+1},0}$ vanish on $V$ in a neighborhood of $0\in\C^{n+1}$.
Expand $f=\sum_{j=1}^{\infty}f_j$ into the weight components, each an homogeneous
polynomial with respect to $\gamma_{\mathbf{w}}$.  Then for a fixed $z$, $f(t\cdot z)=\sum_{j=1}^{\infty}f_j(z)t^j$ defines an holomorphic
function $t\mapsto f(t\cdot z)$ on $\C$.  If $z\in V$, then this function vanishes identically and each $f_j (z)=0$.
In a neighborhood $U$ of $0\in\C^{n+1}$ there are $f^1,\ldots, f^m \in\mathcal{O}_{\C^{n+1},0}$ so that
$V\cap U=\{z:f^1(z)=\cdots =f^m(z)=0\}$.  Then $V\cap U =\{z: f^i_j(z)=0,\ \forall 1\leq i\leq m, j\in\Z_+ \}$. But since
$\mathcal{O}_{\C^{n+1},0}$ is Noetherian, there are finitely many $\gamma_{\mathbf{w}}$-homogeneous polynomials
$f_1,\ldots,f_q$ (selected from the $f^i_j$) so that $V=\{z:f_1(z)=\cdots =f_q(z)=0\}$.

One sees that the the weighted Sasaki structure on $S^{2n+1}_{\mathbf{w}}$ restricts to a Sasaki structure on
$S' =V\cap S^{2n+1}_{\mathbf{w}}$ as the CR-structure on $S^{2n+1}_{\mathbf{w}}$ is compatible with the complex structure on $V$.
And the K\"{a}hler cone of $S'$ can be identified with $V$ by the action of the Euler vector field $-J\xi_{\mathbf{w}}$.

We have the holomorphic orbifold embedding $\ol{\tau}:Z\rightarrow Z' \subset\cps(\mathbf{w})$ which does not necessarily
preserve the K\"{a}hler structures.  As in~\cite{OrVer} we will use a result of J.-P. Demailly.
\begin{thm}[\cite{Dem}]\label{thm:Dem}
Let $(M,\omega)$ be a compact K\"{a}hler manifold, and $Z\subset M$ a closed complex submanifold. And let
$[\omega]\subset H^2(M)$ be the K\"{a}hler class of $\omega$. Consider a K\"{a}hler form $\omega_0$ on $Z$ such that its K\"{a}hler class
coincides with the restriction $[\omega]|_Z$. Then there exists a K\"{a}hler form $\omega'$on $M$ in the
same K\"{a}hler class as $\omega$, such that $\omega'|_Z =\omega_0$.
\end{thm}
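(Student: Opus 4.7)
The plan is to use the $\partial\bar\partial$-lemma on $Z$ to produce a potential for $\omega_0 - \omega|_Z$, extend it to $M$, glue with $\omega$ via a cutoff, and restore positivity across the transition region by adding a nonnegative plurisubharmonic regularizer that vanishes on $Z$. First, since $[\omega_0] = [\omega|_Z]$ in $H^2(Z,\R)$ and $Z$ is a compact K\"ahler submanifold, the $\partial\bar\partial$-lemma on $Z$ provides $\phi \in C^\infty(Z,\R)$ with $\omega_0 - \omega|_Z = i\partial\bar\partial\phi$. I would extend $\phi$ to some $\Phi \in C^\infty(M,\R)$ using a tubular neighborhood retraction onto $Z$ together with a cutoff. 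Because $Z$ is a complex submanifold, restriction to $Z$ commutes with $\partial$ and $\bar\partial$, so $(i\partial\bar\partial\Phi)|_Z = i\partial\bar\partial\phi$, and thus $\omega_1 := \omega + i\partial\bar\partial\Phi$ is a closed real $(1,1)$-form in the class $[\omega]$ with $\omega_1|_Z = \omega_0$. By continuity and compactness, $\omega_1$ is positive on some open neighborhood $U$ of $Z$ in $M$.

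Next, I would localize the perturbation. Choose a smaller tubular neighborhood $V$ with $\overline V \subset U$ and a cutoff $\chi \in C_c^\infty(U)$ equal to $1$ on $V$. The natural candidate $\omega + i\partial\bar\partial(\chi\Phi)$ is closed, cohomologous to $\omega$, equals $\omega_1$ on $V$ (hence restricts to $\omega_0$ on $Z$), and equals $\omega$ off $U$. The difficulty appears in the transition annulus $U \setminus V$, where the Leibniz expansion
\[
i\partial\bar\partial(\chi\Phi) = \chi\, i\partial\bar\partial\Phi + \Phi\, i\partial\bar\partial\chi + i\bigl(\partial\chi \wedge \bar\partial\Phi - \partial\Phi \wedge \bar\partial\chi\bigr)
\]
introduces sign-indefinite error terms that can spoil positivity.

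To counter these errors, I would introduce a regularizer. Fix a hermitian metric on $M$ and pick $\rho \in C^\infty(M,\R_{\geq 0})$ supported in $U$, with $\rho^{-1}(0) = Z$, vanishing to second order along $Z$, and strictly plurisubharmonic in normal directions on a small tubular neighborhood of $Z$; the cut-off squared distance to $Z$ is a standard choice. A short computation in holomorphic coordinates adapted to $Z$ shows that $(i\partial\bar\partial\rho)|_Z = 0$, since only the normal--normal components of $\partial\bar\partial\rho$ are nonzero on $Z$ and these are killed by pullback to $Z$. For a second cutoff $\tilde\chi \in C_c^\infty(U)$ equal to $1$ on a neighborhood of $\mathrm{supp}(\chi)$, I would set
\[
\omega' := \omega + i\partial\bar\partial(\chi\Phi) + C\, i\partial\bar\partial(\tilde\chi \rho)
\]
for a constant $C>0$. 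This form is automatically closed, in the class $[\omega]$, equal to $\omega$ off $U$, and satisfies $\omega'|_Z = \omega_0$ by the vanishing properties above.

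The hard part will be verifying positivity of $\omega'$ globally, specifically on the transition annulus. My plan is to choose the cutoffs as functions of the radial coordinate $\sqrt{\rho}$, so that $\partial\chi$ and $\bar\partial\chi$ point in normal directions, then exploit that $C\, i\partial\bar\partial\rho$ is strictly positive in those directions. By making the transition annulus narrow and taking $C$ sufficiently large, the dominant term $C\, i\partial\bar\partial\rho$ should overwhelm the bounded sign-indefinite errors from $i\partial\bar\partial(\chi\Phi)$ and from the Leibniz correction $C\, i\partial\bar\partial(\tilde\chi\rho) - C\tilde\chi\, i\partial\bar\partial\rho$. Combined with the positivity of $\omega$ in tangential directions on $U$, a careful quantitative balance of these scales yields $\omega' > 0$ on all of $M$, producing the desired K\"ahler form.
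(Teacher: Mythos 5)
The paper quotes this theorem from Demailly \cite{Dem} without giving a proof, so your argument has to stand on its own. Its cohomological skeleton is fine: the $\partial\bar\partial$-lemma on $Z$, the fact that pullback along the holomorphic inclusion commutes with $i\partial\bar\partial$ (so any extension $\Phi$ of $\phi$ gives $(\omega+i\partial\bar\partial\Phi)|_Z=\omega_0$), and the observation that adding $i\partial\bar\partial$ of a function vanishing on $Z$ does not disturb the restriction. The two positivity steps, however, are exactly where the content of the theorem lies, and both have genuine gaps. First, ``by continuity and compactness, $\omega_1$ is positive on some neighborhood $U$ of $Z$'' is unjustified: positivity of the pullback $\omega_1|_Z=\omega_0$ only controls the tangential block of $\omega_1$ at points of $Z$, while the mixed and normal--normal components of $i\partial\bar\partial\Phi$ depend on the chosen extension and can make $\omega_1$ indefinite already at points of $Z$. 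This is repairable (add $C\,d_Z^2$ to the extension and argue with a block/Cauchy--Schwarz estimate at points of $Z$), but it must be done; as written, your choice of $U$, $V$, $\chi$ rests on an unproved claim.

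Second, and more seriously, the transition-region argument does not close. The errors from $i\partial\bar\partial(\chi\Phi)$ are not bounded independently of the cutoff: they scale like inverse powers of the annulus width, so narrowing the annulus makes them worse. The Leibniz error $C\,i\partial\bar\partial(\tilde\chi\rho)-C\tilde\chi\,i\partial\bar\partial\rho$ is itself proportional to $C$, so taking $C$ large cannot make $C\tilde\chi\,i\partial\bar\partial\rho$ dominate it. Decisively, $i\partial\bar\partial\rho$ is positive only in normal directions (and off $Z$ its tangential part is merely $O(d_Z)$ and of either sign, so at a fixed annulus the term $C\,i\partial\bar\partial\rho$ even contributes tangential negativity growing with $C$); hence it can never absorb tangential--tangential negative contributions such as the tangential part of $\Phi\,i\partial\bar\partial\chi$. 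Indeed, with $\chi$ a function of $\rho$ one has $i\partial\bar\partial\chi=\chi'(\rho)\,i\partial\bar\partial\rho+\chi''(\rho)\,i\partial\rho\wedge\bar\partial\rho$, whose first term has a tangential piece of size $|\chi'|\,O(d_Z)$ multiplied by $\Phi$, which is of order $\operatorname{osc}(\phi)$ and not small; the only tangential positivity available is that of $\omega$, and it is overwhelmed for narrow annuli. The standard way to finish is not a cutoff of the potential at all but Demailly's regularized maximum: glue the potential $\Phi+C\,d_Z^2$ (shifted by a constant) against a comparison potential defined away from $Z$ (for instance a small multiple of the logarithm of a global nonnegative function equal to $d_Z^2$ near $Z$), arranged so that the first dominates near $Z$ and the second dominates near the outer boundary; $\max_\epsilon$ of two $\omega$-plurisubharmonic potentials is again $\omega$-plurisubharmonic where both are, so no sign-indefinite cutoff errors ever arise, the form equals $\omega+i\partial\bar\partial(\Phi+C d_Z^2)$ near $Z$ (hence restricts to $\omega_0$), and it is Kähler in the class $[\omega]$ globally. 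Without this or an equivalent gluing device, your sketch is missing the core of the proof.
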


The suborbifold $Z' \subset\cps(\mathbf{w})$ has a K\"{a}hler structure $\omega=\frac{1}{2}d\eta$ inherited from
the Sasaki structure $(g,\xi,\eta,\Phi)$ on $S$.  And $\cps(\mathbf{w})$ has the K\"{a}hler structure
$\omega_{\mathbf{w}} = \frac{1}{2}d\eta_{\mathbf{w}}$ inherited from the weighted Sasaki structure on $S^{2n+1}_{\mathbf{w}}$.
Since $\xi=\xi_{\mathbf{w}}$ on $S'$, $\alpha =\eta_{\mathbf{w}} -\eta$ is a basic form.
Thus $\omega_{\mathbf{w}} =\omega +\frac{1}{2}d\alpha$.  By Theorem~\ref{thm:Dem} there is an $f\in C^{\infty}(Z')$
so that $\omega'_{\mathbf{w}} =\omega_{\mathbf{w}} +dd^c f$ satisfies $\omega'_{\mathbf{w}}|_{Z'} =\omega$.

The transversally deformed Sasaki structure on $S^{2n+1}_{\mathbf{w}}$ has contact form
$\eta'_{\mathbf{w}}=\eta_{\mathbf{w}} +d^c f$.  And $\eta -\eta'_{\mathbf{w}}|_{S'}$ is closed.
Since $H^1(S,\R)=0$ there is an $h\in C_B^{\infty}(S')$ with $dh=\eta -\eta'_{\mathbf{w}}|_{S'}$.
Extend $h$ to a smooth basic function $h\in C_B^{\infty}(S^{2n+1}_{\mathbf{w}})$.  Then the contact form
$\tilde{\eta}_{\mathbf{w}}=\eta_\mathbf{w} +d^c f +dh$ on $S^{2n+1}_{\mathbf{w}}$ restricts to $\eta$ on $S'$.
And with the transversally deformed Sasaki structure $(\tilde{g},\xi_{\mathbf{w}},\tilde{\eta}_{\mathbf{w}},\tilde{\Phi}_{\mathbf{w}})$
on $S^{2n+1}_{\mathbf{w}}$, the embedding $\tau:S\rightarrow S^{2n+1}_{\mathbf{w}}$ preserves Sasaki structures.
\end{proof}

\subsection{Resolutions}

Let $\omega_X$ denote the dualizing sheaf of $X$.  Then we have $\omega_X \cong i_*(\mathcal{O}(\mathbf{K}_{C(S)}))$,
where $i:C(S)\rightarrow X$ is the inclusion, as the codimension of $\Sing(X)=\{o\}\subset X$ is greater than 1.
Recall that $X$ is said to be p-Gorenstein if $\omega_X^{[p]}:=i_*(\omega_{C(S)}^{\otimes p})$ is locally free for
$p\in\N$, and $X$ is $\Q$-Gorenstein if it is p-Gorenstein for some p.  We will call $X$ Gorenstein if it is
1-Gorenstein.

Recall that $X$ has rational singularities if $R^i \pi_* \mathcal{O}_Y =0$, for $i>0$, where
$\pi:Y\rightarrow X$ is a resolution of singularities.  If this holds for a resolution, then it holds for every
resolution.  If $o\in X$ is an isolated singularity, we have a simple criterion for
rationality (cf.~\cite{Bur} and~\cite{Lau}).

\begin{prop}\label{prop:ration}
Let $\Omega$ be a holomorphic $n$-form defined, and nowhere vanishing, on a deleted neighborhood of $o\in X$.
Then $o\in X$ is rational if and only if
\begin{equation}\label{eq:ration}
\int_U \Omega\wedge\ol{\Omega}<\infty,
\end{equation}
for $U$ a small neighborhood of $o\in X$.
\end{prop}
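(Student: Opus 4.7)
The plan is to lift both sides of the equivalence to a resolution $\pi\colon Y\to X$ of the isolated singularity at $o$, chosen so that $E=\pi^{-1}(o)$ is a simple normal crossings divisor (by Hironaka), and rephrase each side as a condition on the meromorphic section $\pi^*\Omega$ of $\omega_Y$. Since $X$ is normal and $\{o\}$ has codimension $\ge 2$, Hartogs gives $\omega_X=i_*\omega_{X\setminus\{o\}}$, so the nowhere vanishing $\Omega$ on $U\setminus\{o\}$ extends to a generating section of $\omega_X$ on a neighborhood $U\ni o$; in particular $X$ is Gorenstein at $o$, and on $\pi^{-1}(U)$ the divisor of $\pi^*\Omega$ as a section of $\omega_Y$ is supported on $E$ and coincides with the discrepancy divisor $K_Y-\pi^*K_X=\sum a_iE_i$.

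For the analytic step I would use that $\pi$ is a biholomorphism off $E$ to write
\[
\int_U \Omega\wedge\bar\Omega \;=\; \int_{\pi^{-1}(U)} \pi^*\Omega\wedge\overline{\pi^*\Omega}.
\]
In local SNC coordinates $(z_1,\ldots,z_n)$ on $Y$ with $E=\{z_1\cdots z_k=0\}$, one has $\pi^*\Omega=z_1^{a_1}\cdots z_k^{a_k}f\,dz_1\wedge\cdots\wedge dz_n$ with $f$ holomorphic and non-vanishing, and a direct polar-coordinate computation of $|z_1|^{2a_1}\cdots|z_k|^{2a_k}$ against the Euclidean volume form shows the integral is finite near that point iff every $a_i\ge 0$. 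Covering a compact neighborhood of $E$ by finitely many such charts, condition \eqref{eq:ration} is therefore equivalent to the discrepancy divisor being effective, i.e.\ to $o\in X$ being a canonical (Gorenstein) singularity.

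It remains to identify canonical Gorenstein with rational. For the canonical-to-rational direction I would combine Grauert-Riemenschneider, $R^i\pi_*\omega_Y=0$ for $i>0$, with the isomorphism $\omega_Y\cong\pi^*\omega_X\otimes\mathcal{O}_Y(\sum a_iE_i)$ and effectivity of the $a_i$ to deduce $R^i\pi_*\mathcal{O}_Y=0$ for $i>0$. Conversely, if $o\in X$ is rational then the natural inclusion $\pi_*\omega_Y\hookrightarrow\omega_X$ is an isomorphism (Kempf's characterization of rational Gorenstein singularities), so the generator $\Omega$ of $\omega_X$ lifts to a holomorphic section of $\omega_Y$ on $\pi^{-1}(U)$, forcing the discrepancies $a_i\ge 0$.

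I expect this last step to be the main obstacle: the local integrability computation is essentially routine once an SNC resolution is chosen, but matching the holomorphic extension of $\pi^*\Omega$ with the vanishing of $R^i\pi_*\mathcal{O}_Y$ genuinely relies on the Elkik-Kempf equivalence of canonical and rational singularities in the Gorenstein case, invoked through Grauert-Riemenschneider, which is the substantive algebro-geometric input behind the criterion.
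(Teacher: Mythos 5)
The paper itself does not prove this proposition; it is quoted from Burns and Laufer, so your argument has to stand on its own. Its first half does: extending $\Omega$ across $o$ as a generator of $\omega_X$ (normality plus codimension $\ge 2$), identifying $\operatorname{div}(\pi^*\Omega)$ on an SNC resolution with the discrepancy divisor $\sum_i a_iE_i$, and the polar-coordinate computation showing that $|z_1|^{2a_1}\cdots|z_k|^{2a_k}$ is locally integrable exactly when each $a_i>-1$, hence (the $a_i$ being integers) when $a_i\ge 0$, are all correct and routine. You have thus correctly reduced \eqref{eq:ration} to the assertion that $o\in X$ is a canonical singularity with $K_X$ Cartier at $o$. (Strictly speaking the hypothesis only gives that $\omega_X$ is invertible near $o$, i.e.\ quasi-Gorenstein; Cohen--Macaulayness is not known at this stage, but it is not needed to define the discrepancies, and in the converse direction it follows from rationality, so this is only a matter of wording.)

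The gap is in the direction ``canonical implies rational''. The mechanism you propose --- combine Grauert--Riemenschneider $R^i\pi_*\omega_Y=0$ with the inclusion $\mathcal{O}_Y\hookrightarrow\mathcal{O}_Y(\sum_i a_iE_i)\cong\omega_Y$ over $\pi^{-1}(U)$ (where $\pi^*\omega_X$ is trivialized by $\Omega$) to ``deduce'' $R^i\pi_*\mathcal{O}_Y=0$ --- does not work as stated: higher direct images are not left exact, so an injection of sheaves $\mathcal{O}_Y\hookrightarrow\omega_Y$ gives no control of $R^i\pi_*\mathcal{O}_Y$ in terms of $R^i\pi_*\omega_Y$. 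This implication is precisely the substantive theorem of Elkik (and Flenner) that canonical singularities are rational; every known proof requires more than GR vanishing (Kawamata--Viehweg-type vanishing, or in the present isolated analytic setting the $L^2$/duality argument of Burns, Laufer in dimension two --- which is exactly what the paper cites for the whole proposition). Your converse direction is fine: rationality gives $\pi_*\omega_Y=\omega_X$ (Kempf's criterion, valid for the Stein neighborhoods considered here), so $\pi^*\Omega$ extends holomorphically and all $a_i\ge 0$, whence \eqref{eq:ration}. So either cite Elkik or Burns honestly for the hard half --- in which case your write-up is a clean and acceptable reduction of the stated criterion to that theorem --- or supply a genuine proof of that half; the GR one-liner cannot be repaired into one.
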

This implies that $\pi_*\omega_Y =\omega_X$ for any resolution $\pi:Y\rightarrow X$.  This has the following meaning.
\begin{defn}[\cite{Re1,Re2}]
We say that $X$ has \emph{canonical singularities} if it is normal and
\begin{thmlist}
 \item  For some $p\in\N$, $\omega_X^{[p]}$ is locally free,
 \item  $\pi_*\omega_Y^{\otimes p} =\omega_X^{[p]}$, for any resolution $\pi:Y\rightarrow X$.\label{def:item2}
\end{thmlist}
\end{defn}
Putting (ii) in terms of Weil divisors, we have
\begin{equation}\label{eq:discrep}
K_Y \equiv \pi^* K_X +\sum_i a(E_i,X)E_i,
\end{equation}
where $E_i$ are the $\pi$-exceptional divisors and $a(E_i,X)$ is the discrepancy.  Then (ii)
is equivalent to $a(E_i,X)\geq 0$ for all $E_i$.  Note that this condition is independent of the resolution.
In general canonical singularities are rational, but not all rational singularities are canonical.

A resolution $\pi:Y\rightarrow X$ is said to be \emph{crepant} if the discrepancy is zero.
In other words
\begin{equation}
\pi^* \omega_X =\omega_Y =\mathcal{O}(\mathbf{K}_Y).
\end{equation}
We are interested in finding examples of Ricci-flat K\"{a}hler cones $X=C(S)$ which admit a crepant resolution.

\begin{prop}\label{prop:resol}
Let $X=C(S)$ be the K\"{a}hler cone of a Sasaki manifold $S$ satisfying Proposition~\ref{prop:CY-cond},
e.g. $S$ is Sasaki-Einstein.  Then $X$ is $\Q$-Gorenstein, and $o\in X$ is a rational singularity.
If $X$ is Gorenstein, then $o\in X$ is a canonical singularity.

Suppose $X$ admits a crepant resolution $\pi:Y\rightarrow X$.
If $H_1(Y,\Z)=0$, which is always the case in dimension 3, then $X$ is Gorenstein.
\end{prop}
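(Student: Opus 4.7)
I would address the four assertions in turn, starting from the nowhere vanishing section $\Omega\in H^{0}(C(S),\mathbf{K}_{C(S)}^{\ell})$ furnished by Proposition~\ref{prop:CY-cond}(iii).

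For the $\Q$-Gorenstein claim: since $X$ is normal and $\Sing X=\{o\}$ has codimension $n\geq 2$, the push-forward $i_{*}\Omega$ extends $\Omega$ to a global trivializing section of $\omega_{X}^{[\ell]}=i_{*}\omega_{C(S)}^{\otimes\ell}$, so $\omega_{X}^{[\ell]}$ is locally free (in fact trivial) and $X$ is $\ell$-Gorenstein. For rationality of $o\in X$ I would apply Proposition~\ref{prop:ration}: after choosing a local trivializing section $\tilde\Omega$ of $\mathbf{K}_{C(S)}$ on a punctured neighborhood of $o$ (or passing to a cyclic $\ell$-cover, if a univalued global $n$-form is desired), identity~(\ref{eq:hol-form}) writes $\tilde\Omega\wedge\overline{\tilde\Omega}$ as a positive multiple of $e^{h}\omega^{n}/n!$. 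The function $h$ is invariant under both $\xi$ and $r\partial_{r}$, hence pulled back from the compact link $S$ and bounded, while the K\"ahler cone has finite volume on a neighborhood of $o$; the integrability criterion is therefore met.

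When $X$ is Gorenstein, rationality gives $\pi_{*}\omega_{Y}=\omega_{X}$ for every resolution (Grauert--Riemenschneider combined with Serre duality, or equivalently Kempf--Reid-type descent). Writing $\omega_{Y}=\pi^{*}\omega_{X}\otimes\mathcal{O}_{Y}(D)$ with $D=\sum_{i}a(E_{i},X)E_{i}$ the discrepancy divisor (an honest integral Cartier divisor now, since $\omega_{X}$ is invertible), the projection formula yields $\omega_{X}=\omega_{X}\otimes\pi_{*}\mathcal{O}_{Y}(D)$, and therefore $\pi_{*}\mathcal{O}_{Y}(D)=\mathcal{O}_{X}$. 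Because $D$ is supported on the exceptional locus and $X$ is normal, this equality forces every $a(E_{i},X)\geq 0$; otherwise the constant section $1$ of $\mathcal{O}_{X}$ would have to vanish to some positive order on an exceptional $E_{j}$. Hence $o\in X$ is canonical.

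For the converse: crepancy means $\omega_{Y}^{\ell}\cong\pi^{*}\omega_{X}^{[\ell]}$, and by the first step this line bundle is trivial, so $[\omega_{Y}]$ is an $\ell$-torsion class in $\Pic(Y)$. The exponential sheaf sequence gives
\[
H^{1}(Y,\mathcal{O}_{Y})\longrightarrow\Pic(Y)\longrightarrow H^{2}(Y,\Z),
\]
and since $H^{1}(Y,\mathcal{O}_{Y})$ is a $\C$-vector space, any torsion class in $\Pic(Y)$ injects into $\operatorname{Tors}H^{2}(Y,\Z)$. The universal coefficient theorem identifies this latter group with $\operatorname{Tors}H_{1}(Y,\Z)$, which vanishes by hypothesis, so $\omega_{Y}$ itself is trivial. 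Restricting a trivializing section to $C(S)=Y\setminus\pi^{-1}(o)$ and pushing forward via $i_{*}$ produces a trivialization of $\omega_{X}$, so $X$ is Gorenstein. In complex dimension three the hypothesis $H_{1}(Y,\Z)=0$ is automatic because $Y$ is $1$-convex and deformation retracts onto its exceptional set $\pi^{-1}(o)$, which for isolated canonical threefold singularities is simply connected by the structural classification of crepant threefold resolutions.

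The main technical headache will be bookkeeping in the $\ell$-Gorenstein (rather than genuinely Gorenstein) case: one must interpret (\ref{eq:hol-form}) and the discrepancy relation (\ref{eq:discrep}) with $\Q$-Cartier data, either by localizing on small opens where $\mathbf{K}_{C(S)}$ is trivial or by passing through the global $\ell$-fold index-one cover. Once that bookkeeping is controlled, the four statements fall out cleanly from the sheaf-theoretic and topological inputs outlined above.
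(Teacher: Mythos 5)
The one genuine gap is in the rationality claim, and it occurs precisely in the case the proposition is meant to cover beyond the Gorenstein one, namely $X$ $\Q$-Gorenstein with $\ell>1$. Your first option --- a trivializing section of $\mathbf{K}_{C(S)}$ on a deleted neighborhood of $o$ --- cannot exist there: if such a nowhere vanishing $n$-form existed on $U\setminus\{o\}$, pushing it forward by $i_*$ would make $\omega_X$ invertible at $o$, i.e.\ $X$ would already be Gorenstein (for instance $\C^2/\Z_3$ acting with weights $(1,1)$ admits no such form near the apex). Nor does ``localizing on small opens where $\mathbf{K}$ is trivial'' help, since Proposition~\ref{prop:ration} requires a single nowhere vanishing form on the whole deleted neighborhood and the finiteness of one integral. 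Your second option, the index-one cyclic cover $\ol{X}\rightarrow X$, does produce an honest form upstairs, and your integrability estimate ($h$ bounded because it is basic, finite cone volume near the apex, via (\ref{eq:hol-form})) then shows that $\ol{X}$ has a rational singularity --- but that is rationality of the cover, not of $X$. What is missing is the descent statement that the image of a finite surjective morphism from a variety with rational singularities again has rational singularities; this is exactly the step the paper supplies by citing \cite[Prop.~5.13]{KolMor}, after building a finite cover of the punctured cone (there via finiteness of $\pi_1(S)$, obtained from a transverse Calabi--Yau deformation and Myers' theorem; your index-one cover is an acceptable substitute for that construction). This descent is a genuine theorem, not the ``bookkeeping'' your closing paragraph anticipates, and without it rationality of $o\in X$ is not established.

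The rest either matches the paper or is a valid variant. The $\Q$-Gorenstein statement is the paper's argument verbatim. For ``Gorenstein implies canonical'' you go through $\pi_*\omega_Y=\omega_X$ (Grauert--Riemenschneider/Kempf, available once rationality is known, and in the Gorenstein case rationality follows directly from Proposition~\ref{prop:ration} with the honest form) and the projection formula to force nonnegative discrepancies, whereas the paper extends $\Omega$ to a regular form on any resolution via the proof of Proposition~\ref{prop:ration}; both work. For the converse you shortcut the paper's appeal to Proposition~\ref{prop:pic} by using the exponential sequence directly; note, however, that ``torsion in $\Pic Y$ injects into the torsion of $H^2(Y,\Z)$'' is false in general --- the kernel of $\Pic Y\rightarrow H^2(Y,\Z)$ is $H^1(Y,\mathcal{O}_Y)/H^1(Y,\Z)$, which can contain torsion (complex tori) --- and it is correct here only because $H_1(Y,\Z)=0$ gives $H^1(Y,\Z)=0$, so the kernel is the torsion-free vector space $H^1(Y,\mathcal{O}_Y)$; you should say this explicitly. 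Finally, the assertion that $H_1(Y,\Z)=0$ is automatic in dimension three is, in the paper, Shepherd-Barron's theorem \cite{Sh-B} that a crepant resolution of an isolated canonical $3$-fold singularity is simply connected; your appeal to a ``structural classification'' is the same external input and should be cited as such rather than left vague.
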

\begin{proof}
There exits a section $\Omega_p \in\Gamma(\mathbf{K}_{C(S)}^{\otimes p})$.  The Riemann extension theorem shows
that $\omega_X^{[p]} =i_*(\mathcal{O}(\mathbf{K}_{C(S)}^{\otimes p}))$ is locally free, and in fact trivial.
If $X$ is Gorenstein, then the holomorphic form $\Omega$ in Proposition~\ref{prop:CY-cond} is easily seen to
satisfy (\ref{eq:ration}).  In fact, the proof of Proposition~\ref{prop:ration} shows that $\Omega$ extends
to a regular form on any resolution of $X$.  Thus $o\in X$ is a canonical singularity.

Note that the conditions of Proposition~\ref{prop:CY-cond} imply that $\pi_1(S)$ is finite.  Indeed, the
transversal Ricci form $\ric^T \in [a\omega^T] ,\ a>0,$ where $a\omega^T$ is a positive basic $(1,1)$ class.
By the transverse version of the Calabi-Yau theorem there is a transversal K\"{a}hler deformation to a Sasaki
structure with $\Ric^T >0$.  Then after a possible $D$-homothetic transformation,  equation (\ref{eq:ricci})
shows that one can obtain a Sasaki metric with $\Ric_g >0$.  Then the claim follows by Meyer's theorem.

The universal cover $\ol{S}$ of $S$ is finite, and we have a
finite unramified morphism $g :\ol{X}\rightarrow X$, where $\ol{X}=C(\ol{S})\cup\{o\}$.  Then $\ol{X}$ has canonical
singularities by Proposition~\ref{prop:ration}.  It is well known that the image of a finite morphism $X$ must have
rational singularities~\cite[Prop. 5.13]{KolMor}.

By assumption $\pi^*\Omega_r$ is a nonvanishing section of
$\mathbf{K}_Y^{\otimes r}$.  Proposition~\ref{prop:pic} below proves that $\Pic Y =H^2(Y,\Z)$ which is free
by assumption.  Thus $\mathbf{K}_Y$ is trivial and has a nowhere vanishing section $\Omega$, and its restriction to
$i_*(\mathcal{O}(\mathbf{K}_{C(S)}))$ defines a nonvanishing section of $\omega_X$.

It is a result of N. Shepherd-Barron~\cite{Sh-B} than a crepant resolution of an isolated canonical 3-fold singularity is in fact
simply connected.
\end{proof}

Note that for any resolution $\pi:Y\rightarrow X$ the long exact sequence of the pair $(Y,S)$ (topologically $Y$ can be considered to have boundary $S$) and some arguments as in Theorem~\ref{thm:Betti} one can show that
$H_1(S,\Z)\rightarrow H_1(Y,\Z)$ is a surjection in any dimension.

We collect some properties of crepant resolutions $\pi:Y\rightarrow X$ of an isolated singularity that will be useful in the
sequel.  We are interested in the case in which $X=C(S)$ is the cone over a Sasaki manifold satisfying
Proposition~\ref{prop:CY-cond}, but the following results are true more generally for an isolated canonical singularity
$o\in X$ with $X$ Stein.

Recall that the \emph{local divisor class group} is
\begin{equation}
 \Cl(X,o):= \underset{\rightarrow}{\lim}\frac{\WDiv U}{\CDiv U},
\end{equation}
where the limit is over all open neighborhoods $U$ of $o$.  Since $X$ is contractible, $\Cl(X,o)=\Pic(X\setminus o)$.
Here we are considering analytic Weil divisors $\WDiv$ and analytic Cartier divisors $\CDiv$.
Thus $X$ is analytically factorial precisely when $\Cl(X,o)=0$.  Since $o\in X$ is an isolated rational singularity
it follows from Flenner~\cite[Satz (6.1)]{Fle} that
\begin{equation}\label{eq:Flen}
 \Cl(X,o) =H^2(S,\Z).
\end{equation}
Thus if $\pi_1(S)=e$, then $X$ is analytically factorial if it is analytically $\Q$-factorial.

In the case $\dim X=3$ much is known about the structure of canonical singularities.  We summarize some important results.
\begin{thm}\label{thm:can-3fold}
Let $X$ be a 3-dimesional analytic variety with only canonical singularities.  Then we have the following.
\begin{thmlist}
\item \emph{Terminalization~\cite[Main Theorem]{Re2}:} There is a projective crepant partial resolution $\pi:Y\rightarrow X$
where $Y$ has only terminal singularities.

\item \emph{$\Q$-factorialization~\cite[Corollary 5.4]{Kaw}:}  By possible further projective small resolutions $Y$ can be chosen
	to be analytically $\Q$-factorial.

\item  \emph{Quasi-uniqueness~\cite[Corollary 4.11]{Kol1}:}  $Y$ is generally not unique, but any two admit the same finite set of
	germs of isolated singularities.
\end{thmlist}
\end{thm}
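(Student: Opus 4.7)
The three statements are pillars of the three-dimensional minimal model program, and the unifying strategy is to run a suitable relative MMP over $X$. Rather than attacking each item in isolation, the plan is to organize the proof around a single sequence of birational modifications over $X$: first produce \emph{any} resolution $\pi_0:Y_0\to X$, then run the $K_{Y_0}$-MMP relative to $X$ to kill the positive part of the discrepancy divisor, and finally refine the output to be $\mathbb{Q}$-factorial using small projective modifications. Uniqueness will then be a consequence of flop-equivalence.

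For part (i), I would start from a log resolution $\pi_0:Y_0\to X$ and write $K_{Y_0}=\pi_0^*K_X+\sum a_i E_i$ with $a_i\geq 0$ by canonicity. The goal is to contract, over $X$, the exceptional divisors with $a_i=0$ and leave only terminal singularities. Using the relative cone and contraction theorems (Kawamata--Shokurov), one extracts $K_{Y_0}$-negative extremal rays over $X$ and performs divisorial contractions or small modifications (flips); since the base is Stein, all intermediate models remain projective over $X$. The key input is termination of threefold MMP (Mori, Shokurov), which guarantees that the process halts at a model $Y\to X$ with $K_Y$ nef over $X$. Combined with $K_{Y_0}=\pi_0^*K_X+\text{(effective)}$, the negativity lemma forces the output to be crepant, yielding the terminalization. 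The main obstacle here is the existence and termination of flips in dimension three, a deep theorem that I would invoke from \cite{Re2} rather than reprove.

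For part (ii), starting from the terminalization $Y$, I would $\mathbb{Q}$-factorialize by iterated small projective modifications. The local divisor class group $\mathrm{Cl}(Y,p)$ at each singular point $p$ measures the failure of analytic $\mathbb{Q}$-factoriality. Kawamata's construction extracts non-$\mathbb{Q}$-Cartier Weil divisors one at a time by small extremal contractions (flops), each of which strictly increases the relative Picard number. Since the Picard number of any small modification is bounded (for an isolated threefold singularity $\mathrm{Cl}(Y,p)$ has finite rank), this terminates in finitely many steps with an analytically $\mathbb{Q}$-factorial terminal model, still crepant and projective over $X$.

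For part (iii), observe that any two crepant projective $\mathbb{Q}$-factorial terminalizations $Y_1,Y_2\to X$ agree in codimension one: both resolve $X$ without extracting divisors beyond the necessary exceptional set, and their Picard groups over $X$ coincide modulo flops. Kollár's theorem on $K$-trivial birational maps between threefolds with terminal singularities implies that $Y_1$ and $Y_2$ are connected by a finite sequence of flops. Since flops are isomorphisms outside a one-dimensional locus and isolated singularities are zero-dimensional, the collection of analytic germs of isolated singularities is preserved at each step. This yields the quasi-uniqueness assertion. The genuinely hard ingredients throughout are the termination of threefold flips and the existence of flops; granted these, each of (i)--(iii) is an application of the cone, contraction, and negativity machinery of the relative MMP.
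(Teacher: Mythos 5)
Your overall strategy is legitimate but differs in character from the paper, which does not prove this theorem at all: it is quoted with citations to Reid, Kawamata and Koll\'ar, so what is really being assessed is whether your sketch faithfully reduces to those inputs. For (i) and (ii) it essentially does: starting from a resolution $\pi_0:Y_0\to X$ with $K_{Y_0}=\pi_0^*K_X+\sum a_iE_i$, $a_i\ge 0$, a relative minimal model over $X$ is crepant by the negativity lemma and terminal, and $\Q$-factorialization is achieved by finitely many small projective modifications (these are small morphisms $Y'\to Y$, e.g. a relative $\operatorname{Proj}$ of a divisorial algebra, not ``small extremal contractions (flops)'' as you write), with termination controlled by the finite rank of the local class group. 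Two caveats: existence and termination of threefold flips are not in \cite{Re2} --- Reid's terminalization is obtained by explicit crepant blow-ups and predates Mori's flip theorem, so your MMP route needs the later references --- and the point of \cite[Corollary 5.4]{Kaw} is precisely the \emph{analytic} $\Q$-factoriality used in the rest of the paper, a distinction between algebraic and analytic $\Q$-factoriality that your sketch glosses over.

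The genuine gap is in (iii). You justify that the germs of isolated singularities are preserved because ``flops are isomorphisms outside a one-dimensional locus and isolated singularities are zero-dimensional.'' This does not follow: the terminal singular points of $Y$ typically lie \emph{on} the flopping curves (if they did not, the flop would be an analytically trivial Atiyah-type modification and there would be nothing to prove), and near such points the flop is not an isomorphism, so a dimension count gives no control on the analytic germ. That the flop of a terminal threefold has the same analytic singularities as the original is exactly the nontrivial content of Koll\'ar's theorem in \cite{Kol1} (the result cited in the statement), proved via the structure theory of terminal threefold singularities rather than by general position. So your reduction of (iii) to ``any two $\Q$-factorial crepant terminalizations agree in codimension one and are connected by flops'' is correct and matches the paper's own remark, but the last step must invoke Koll\'ar's preservation theorem as an input; as written, your argument for it is fallacious.
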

The proof of Theorem~\ref{thm:can-3fold} (iii) involves showing that the two partial resolutions
differ by a finite sequence of birational modifications called flops.  These preserve the kind of singularities and
also $H^*(Y,\Z)$~\cite{Kol2}.  Thus in case $X=C(S)$ and $\dim X=3$, for a crepant resolution
$\pi:Y\rightarrow X$ $H^*(Y,\Z)$ is an invariant of the singularity $X=C(S)$.

We define the \emph{divisor class number} to be $\rho(X)=\rank\Cl(X,o)$.  A prime $\pi$-exceptional divisor $E$ is called
\emph{crepant} if its coefficient $a(E, X)=0$ in (\ref{eq:discrep}).  The number of crepant divisors, denoted $c(X)$, is finite and
independent of the resolution $\pi:Y\rightarrow X$.
\begin{prop}[\cite{Cai}]\label{prop:pic}
Let $o\in X$ be a canonical singularity with $X$ Stein, and let $\pi:Y\rightarrow X$ be a partial crepant resolution
of $X$.  Then $\Pic Y = H^2(Y,\Z)$.
\end{prop}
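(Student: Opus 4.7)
The plan is to reduce the computation of $\Pic Y$ to the cohomology of the constant sheaf $\Z$ on $Y$ via the exponential exact sequence
\begin{equation*}
0 \longrightarrow \Z \longrightarrow \mathcal{O}_Y \longrightarrow \mathcal{O}_Y^{*} \longrightarrow 0,
\end{equation*}
from which one extracts the segment
\begin{equation*}
H^1(Y,\mathcal{O}_Y) \longrightarrow \Pic Y \longrightarrow H^2(Y,\Z) \longrightarrow H^2(Y,\mathcal{O}_Y).
\end{equation*}
Thus it suffices to prove that $H^i(Y,\mathcal{O}_Y)=0$ for $i=1,2$. Since the exponential sequence is exact on any complex analytic space (even when $Y$ is singular), the vanishing of the flanking terms gives the desired isomorphism.

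First I would note that $Y$ itself has only canonical singularities: writing the discrepancy formula for a further resolution $g:\tilde Y\to Y$ and its composite $\tilde\pi=\pi\circ g$ with $X$, crepancy $\pi^{*}K_X=K_Y$ forces the discrepancies of the exceptional divisors over $Y$ to coincide with those over $X$, so canonicality of $X$ transfers to $Y$. In particular $Y$ has rational singularities, so $g_{*}\mathcal{O}_{\tilde Y}=\mathcal{O}_Y$ (normality) and $R^{j}g_{*}\mathcal{O}_{\tilde Y}=0$ for $j>0$. The same rationality of $X$ gives $R^{i}\tilde\pi_{*}\mathcal{O}_{\tilde Y}=0$ for $i>0$, and the Leray spectral sequence for $\pi\circ g$ then forces $R^{i}\pi_{*}\mathcal{O}_Y=0$ for $i>0$.

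Next I would invoke the fact that $X$ is Stein, so Cartan's Theorem B yields $H^{i}(X,\mathcal{O}_X)=0$ for $i>0$. Combined with $\pi_{*}\mathcal{O}_Y=\mathcal{O}_X$ and the vanishing of the higher direct images just established, the Leray spectral sequence for $\pi:Y\to X$ degenerates and gives $H^{i}(Y,\mathcal{O}_Y)=H^{i}(X,\mathcal{O}_X)=0$ for all $i>0$. Plugging $i=1,2$ back into the exponential sequence yields $\Pic Y \cong H^{2}(Y,\Z)$.

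The main conceptual hurdle is the transfer argument that $Y$ has rational singularities (so that higher direct images vanish) despite being only a partial resolution; once that is in place the rest is a routine chase through Leray and the exponential sequence. A minor technical point to check is that the $\Pic Y$ appearing here is the analytic Picard group $H^{1}(Y,\mathcal{O}_Y^{*})$, which is the natural object for the Stein/analytic setting in which the preceding discussion of $\Cl(X,o)$ and (\ref{eq:Flen}) has been carried out.
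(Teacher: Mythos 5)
Your proposal is correct and follows essentially the same route as the paper: the singularities of $Y$ are canonical (hence rational), the Leray spectral sequence together with $X$ being Stein gives $H^i(Y,\mathcal{O}_Y)=0$ for $i>0$, and the exponential sequence then yields $\Pic Y\cong H^2(Y,\Z)$. You simply spell out in more detail the discrepancy-transfer and direct-image steps that the paper leaves as remarks.
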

The singularities of $Y$ are canonical and thus rational.  An easy application of the Leray spectral sequence shows that
$H^i(Y,\mathcal{O}_Y)=0$ for $i>0$.  Then the proposition follows from the exponential sequence
\begin{equation}
 \rightarrow H^1(Y,\mathcal{O}_Y)\rightarrow\Pic Y \rightarrow H^2(Y,\Z)\rightarrow H^2(Y,\mathcal{O}_Y)\rightarrow.
\end{equation}

\begin{thm}[\cite{Cai}]\label{thm:Betti}
Suppose $o\in X$ is a canonical singularity as above.  Let $\pi:Y\rightarrow X$ be a
a partial crepant resolution with terminal analytically $\Q$-factorial singularities.  Then we have
\begin{thmlist}
 \item  $b_1(Y)=b_{2n-1}(Y) =b_{2n}(Y)=0$,

 \item  $b_{2n-2}(Y)= c(X)$,

 \item  $b_2(Y)=\rho(X) +c(X)$.
\end{thmlist}
\end{thm}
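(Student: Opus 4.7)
The plan is to reduce everything to properties of the exceptional set $E=\pi^{-1}(o)$ via the Leray spectral sequence, and then to pin down $b_2$ by combining Proposition~\ref{prop:pic} with the local divisor class group.

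First I would run the Leray spectral sequence for $\pi\colon Y\to X$ with $\Q$-coefficients. Because $X$ is Stein and contractible, and $R^q\pi_*\Q$ is a skyscraper sheaf at $o$ with stalk $H^q(E,\Q)$ for $q\geq 1$ (by proper base change), the $E_2$-page reduces to a single column in positive total degree and the spectral sequence degenerates to
\[ H^k(Y,\Q)\cong H^k(E,\Q)\qquad(k\geq 1). \]
Since $\dim_{\R}E=2n-2$, this immediately gives $b_{2n-1}(Y)=b_{2n}(Y)=0$. For (ii), every prime $\pi$-exceptional divisor is crepant (as $\pi$ is a crepant partial resolution), so $E=E_1\cup\cdots\cup E_{c(X)}$; since the pairwise intersections have complex dimension at most $n-2$, a Mayer-Vietoris argument yields $H^{2n-2}(E,\Q)\cong\bigoplus_i H^{2n-2}(E_i,\Q)\cong\Q^{c(X)}$.

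For (iii), I would use Proposition~\ref{prop:pic} to identify $b_2(Y)=\rank\Pic Y$, and then combine it with the divisor-class exact sequence
\[ 0\longrightarrow\bigoplus_{i=1}^{c(X)}\Z\cdot[E_i]\longrightarrow\Cl(Y)\longrightarrow\Cl(Y\setminus E)\longrightarrow 0. \]
Because $\pi$ restricts to an isomorphism off $E$, we have $\Cl(Y\setminus E)=\Cl(X\setminus\{o\})=\Cl(X,o)$, of rank $\rho(X)$; the classes $[E_i]$ are linearly independent in $\Cl(Y)\otimes\Q$ because any $\Q$-relation would yield a principal divisor supported on $E$, and the only units on the contractible Stein space $X$ are exponentials, forcing the corresponding divisor to be trivial; and the $\Q$-factoriality of $Y$ identifies $\Cl(Y)\otimes\Q=\Pic(Y)\otimes\Q$. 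Taking ranks gives $b_2(Y)=c(X)+\rho(X)$.

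The main obstacle is the claim $b_1(Y)=0$, for which the Leray calculation only reduces the problem to $b_1(E)$ and a direct analysis of $E$ seems awkward. For this I would use the long exact sequence of the topological pair $(Y,S)$, where the Sasaki link $S$ plays the role of boundary at infinity. Poincar\'e-Lefschetz duality
\[ H^1(Y,S;\Q)\cong H_{2n-1}(Y;\Q)=0 \]
(the vanishing on the right following from the preceding Leray computation combined with universal coefficients), together with the finiteness of $\pi_1(S)$ (from the transverse Ricci positivity in Proposition~\ref{prop:CY-cond} via Myers, as in the proof of Proposition~\ref{prop:resol}), then gives $H^1(Y;\Q)\hookrightarrow H^1(S;\Q)=0$ from the segment
\[ 0=H^1(Y,S;\Q)\longrightarrow H^1(Y;\Q)\longrightarrow H^1(S;\Q)=0 \]
of the long exact sequence. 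The delicate point is justifying this duality in the presence of the terminal $\Q$-factorial singularities of $Y$; in dimension three these are isolated, and one can compare $Y$ to a smoother birational model via flops, using that rational cohomology is preserved by flops (see the comments following Theorem~\ref{thm:can-3fold}).
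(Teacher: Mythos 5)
Your treatment of $b_{2n-1}(Y)=b_{2n}(Y)=0$, of (ii), and of (iii) is essentially the paper's argument in different clothing: the Leray degeneration giving $H^k(Y,\Q)\cong H^k(E,\Q)$ is the paper's retraction of $Y$ onto $E=\pi^{-1}(o)$, the Mayer--Vietoris count of $H^{2n-2}(E)$ is identical, and your localization sequence $0\to\bigoplus_i\Z[E_i]\to\Cl(Y)\to\Cl(X\setminus\{o\})\to 0$ together with injectivity (a principal divisor supported on $E$ descends to a unit on the contractible Stein $X$, since $o$ has codimension $\geq 2$), analytic $\Q$-factoriality, and Proposition~\ref{prop:pic} is exactly the paper's commutative diagram of $\WDiv$ and $\Cl$. (Minor point: $E$ may also have components of codimension $\geq 2$, e.g.\ coming from the $\Q$-factorialization step, but these do not affect $H^{2n-2}$.)

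The genuine gap is your argument for $b_1(Y)=0$. Lefschetz duality $H^1(Y,S;\Q)\cong H_{2n-1}(Y;\Q)$ is not available on the singular space $Y$ without further work, and the repair you propose does not exist: flops preserve the germs of the singularities (this is precisely the content of Theorem~\ref{thm:can-3fold}(iii) and the remark following it), so you cannot flop $Y$ to a smoother model -- by quasi-uniqueness every terminal $\Q$-factorial crepant model has the same singularities, and the paper's later hypersurface examples (e.g.\ $k\equiv 2 \bmod 3$ in the first family) are cases where every such model is necessarily singular, which is exactly when a fix would be needed. Moreover the theorem is stated for all $n$ and, per the paragraph preceding it, for a general isolated canonical singularity with $X$ Stein, so both the restriction to $n=3$ and the appeal to Myers' theorem (which requires the Sasaki structure of Proposition~\ref{prop:CY-cond} to conclude $\pi_1(S)$ finite, hence $H^1(S;\Q)=0$) import hypotheses the statement does not assume. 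The paper's route avoids all of this and is what the phrase ``rationality and the Leray spectral sequence as in Proposition~\ref{prop:pic}'' encodes: rationality of the singularities of $Y$ plus Leray over the Stein $X$ give $H^1(Y,\mathcal{O}_Y)=0$, and in the exponential sequence every unit on $Y$ is pulled back from the contractible Stein space $X$ and hence has a logarithm, so $H^1(Y,\Z)$ injects into $H^1(Y,\mathcal{O}_Y)=0$. You should replace your duality step by this sheaf-theoretic argument (or supply an honest duality statement for $Y$ with its isolated singularities, which is considerably more work).
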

\begin{proof}
Arguments using the rationality of the singularities and the Leray spectral sequence similar to those in
Proposition~\ref{prop:pic} show $H^1(Y,\Z)=0$.  The retraction of $X$ to $o\in X$ lifts to a retraction of
$Y$ to $E=\pi^{-1}(o)$.  This proves (i).

For (ii) let $E_i$ be the prime divisors in $E=\pi^{-1}(o)$, then we have
\begin{equation}
H^{2n-2}(Y,\Z) \cong H^{2n-2}(E,\Z)\cong\bigoplus_{i=1}^{c(X)}H^{2n-2}(E_i ,\Z).
\end{equation}

The proof of (iii) follows from the following commutative diagram with exact rows
\begin{equation}
 \begin{CD}
  0 @>>> K @>>> \WDiv Y @>>> \WDiv X @>>> 0\\
  @.    @V=VV   @VVV         @VVV         @.\\
  0 @>>> K @>\iota>> \Cl(Y) @>>> \Cl(X) @>>> 0\\
 \end{CD}
\end{equation}
where $K$ is the group generated by divisors with support in $E=\pi^{-1}(o)$.
It is easy to see that $\iota$ is an inclusion because the codimension of $o\in X$ is at least 2.
Now (iii) follows from Proposition~\ref{prop:pic} and the fact that $Y$ is analytically $\Q$-factorial.
\end{proof}

Note that when $\dim X =3$ all of the Betti numbers besides $b_3(Y)$ are determined by invariants of $X$.
Since we are considering cone singularities $X=C(S)$, we will make use of (iii) most often in the form given by
(\ref{eq:Flen}).
\begin{equation}\label{eq:Betti}
b_2(Y) = b_2(S) + b_{2n-2}(Y).
\end{equation}

\subsection{Toric resolutions}\label{subsec:tor-res}

Let $X=C(S)$ be a toric K\"{a}hler cone.  Then as an algebraic variety $X=X_{\Delta}$ where $\Delta$ is the fan in
$\Z_T \cong\Z^n$ defined by the dual cone $\mathcal{C}(\mu)^*$, spanned by $u_1,\ldots,u_d\in\Z_T$, and its faces as
in (\ref{eq:dual-cone-fan}).  We assume that $X$ is Gorenstein.  Thus there is a $\gamma\in\Z_T^*$ so that
$\gamma(u_i)=-1$ for $i=1,\ldots, d$.  Let $H_\gamma =\{x\in\mathfrak{t}: \langle\gamma,x\rangle =-1\}$ be the hyperplane
defined by $\gamma$.  Then
\begin{equation}
P_{\Delta}:=\{x\in \mathcal{C}(\mu)^* : \langle\gamma,x\rangle =-1\}\subset H_\gamma \cong\R^{n-1}
\end{equation}
is an $(n-1)$-dimensional lattice polytope.  The lattice being $H_\gamma \cap\Z_T \cong\Z^{n-1}$.

A toric crepant resolution
\begin{equation}\label{eq:toric-resol}
\pi: X_{\tilde{\Delta}}\rightarrow X_{\Delta}
\end{equation}
is given by a nonsingular subdivision
$\tilde{\Delta}$ of $\Delta$ with every 1-dimensional cone $\tau_i \in\tilde{\Delta}(1), i=1,\ldots,N$ generated
by a primitive vector $u_i :=\tau_i \cap H_\gamma$.  This is equivalent to a basic, lattice triangulation of
$P_{\Delta}$.  \emph{Lattice} means that the vertices of every simplex are lattice points, and \emph{basic}
means that the vertices of every top dimensional simplex generates a basis of $\Z^{n-1}$.
Note that a \emph{maximal} triangulation of $P_{\Delta}$, meaning that the vertices of every simplex are its only
lattice points, always exists.  Every basic lattice triangulation is maximal, but the converse only holds in
dimension 2.

The condition that $u_i :=\tau_i \cap H_\gamma$ is primitive for each $i=1,\ldots,N$ is precisely the condition
that the section of Proposition (\ref{prop:CY-cond-toric}), $\Omega\in\Gamma(\mathbf{K}_{C(S)})$, characterized
by $\gamma\in\Z_T$ lifts to a non-vanishing section of $\mathbf{K}_{X_{\tilde{\Delta}}}$.
See~\cite{Od}, Proposition 2.1.

Note that a toric crepant resolution (\ref{eq:toric-resol}) of $X_{\Delta}$ is not unique, if one exists.
But if $E=\pi^{-1}(o)$ is the exceptional set, then the number of prime divisors in $E$ is invariant.
There is a prime divisor $E_i, i=d+1,\ldots,N$ for each lattice point $\Z_T \cap\Int P_{\Delta}$.

We give the toric version of Theorem~\ref{thm:Betti}.
\begin{thm}\label{thm:toric-Betti}
Let $X_{\Delta}$ be a n-dimensional Gorenstein toric cone with an isolated singularity $o\in X_{\Delta}$.
Let $\pi:X_{\tilde{\Delta}}\rightarrow X_{\Delta}$ be a
a partial crepant resolution with terminal analytically $\Q$-factorial singularities.  Then we have
\begin{thmlist}
 \item  $b_1(Y)=b_{2n-1}(Y) =b_{2n}(Y)=0$,

 \item  $b_{2n-2}(Y)= c(X)=|\Z_T \cap\Int P_{\Delta}|$,

 \item  $b_2(Y)=\rho(X) +c(X)=d-3 +|\Z_T \cap\Int P_{\Delta}|$.
\end{thmlist}
Furthermore, if $\tilde{\Delta}$ is simplicial, then $b_{2n-3} =0$.
\end{thm}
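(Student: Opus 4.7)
The plan is to obtain statements (i)--(iii) directly from Theorem~\ref{thm:Betti} by translating the abstract invariants $c(X)$ and $\rho(X)$ into toric combinatorics, and then handle the last assertion separately by a retraction argument. Part (i) is immediate: a Gorenstein toric cone $X_\Delta$ with isolated singularity is a canonical singularity by Proposition~\ref{prop:resol}, so Theorem~\ref{thm:Betti}(i) applies verbatim.

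For (ii), I read off the crepant divisors from the fan $\tilde{\Delta}$. As recalled in Section~\ref{subsec:tor-res}, prime $\pi$-exceptional divisors correspond bijectively to rays in $\tilde{\Delta}(1) \setminus \Delta(1)$, each generated by a primitive lattice vector on the hyperplane $H_\gamma$, that is, an element of $\Z_T \cap P_\Delta$. The crepancy condition is automatic in this setting: primitivity of the generator on $H_\gamma$ is precisely what makes the canonical section $\Omega$ of (\ref{eq:hol-form-toric}) extend across the corresponding divisor without zero or pole. Since the original rays $u_1,\ldots,u_d$ are the vertices of $P_\Delta$, the new rays correspond bijectively to lattice points in $\Int P_\Delta$, giving $c(X) = |\Z_T \cap \Int P_\Delta|$.

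For (iii), I compute $\rho(X) = \rank \Cl(X_\Delta, o)$ from the standard presentation of the class group of an affine toric variety,
\[
0 \longrightarrow \Z_T^* \longrightarrow \Z^d \longrightarrow \Cl(X_\Delta) \longrightarrow 0,
\]
with $m \mapsto (\langle m, u_1\rangle,\ldots,\langle m, u_d\rangle)$. Strict convexity of $\mathcal{C}(\mu)$ makes $\mathcal{C}(\mu)^*$ full-dimensional in $\mathfrak{t}$, so $u_1,\ldots,u_d$ span $\mathfrak{t}$ and this first map is injective; hence $\rho(X) = d - n$, which specializes to $d - 3$ in the $n=3$ case of interest. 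Combined with (ii) and Theorem~\ref{thm:Betti}(iii), this yields the formula.

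Finally, for $b_{2n-3}(Y) = 0$ under the simplicial hypothesis, the dilation $r\partial_r$ on $C(S)$ generates a real one-parameter subgroup of $T^n_\C$ acting on $Y$, whose $t \to -\infty$ limit deformation retracts $Y$ onto the compact exceptional set $E = \pi^{-1}(o)$. When $\tilde{\Delta}$ is simplicial, $Y$ is a toric orbifold and $E$ is a compact simplicial toric variety, whose rational cohomology is concentrated in even degrees by Danilov's theorem; thus $b_{2n-3}(Y) = b_{2n-3}(E) = 0$. The main subtlety is justifying the even-cohomology vanishing in the simplicial/orbifold setting, which can alternatively be obtained from a Bialynicki-Birula cell decomposition for a generic one-parameter subgroup of the torus acting on $E$.
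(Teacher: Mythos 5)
Your treatment of (i)--(iii) is essentially sound. Parts (i) and (ii) follow the paper's own route: Theorem~\ref{thm:Betti} plus the identification of the exceptional prime divisors of the terminal $\Q$-factorial model with the rays of $\tilde{\Delta}(1)\setminus\Delta(1)$, i.e.\ with $\Z_T\cap\Int P_{\Delta}$ (here it is worth saying explicitly that the isolated-singularity hypothesis forces every proper face of the cone to be nonsingular, so $\partial P_{\Delta}$ carries no lattice points besides $u_1,\ldots,u_d$, and that terminality forces every interior lattice point to actually appear as a ray). For (iii) you take a genuinely different route: you compute the rank of the class group from the toric presentation $0\to\Z_T^*\to\Z^d\to\Cl(X_{\Delta})\to 0$, getting $d-n$, whereas the paper obtains $\rho(X)=d-3$ from Flenner's identification $\Cl(X,o)\cong H^2(S,\Z)$ in (\ref{eq:Flen}) together with Oh's computation $b_2(S)=d-3$ for the toric Sasaki link. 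Your computation is cleaner, works in every dimension, and explains the paper's $d-3$ as the $n=3$ case; the only caveat is that $\rho(X)$ in Theorem~\ref{thm:Betti} is the rank of the \emph{analytic} local class group, so you should add a line identifying its rank with that of the algebraic class group (e.g.\ via (\ref{eq:Flen}) again, or the exponential sequence on the smooth cone $X\setminus\{o\}$); since only the rank is needed, this is a minor bridge, not an obstruction.

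The genuine gap is in the final assertion. You claim that $E=\pi^{-1}(o)$ ``is a compact simplicial toric variety'' and apply Danilov's theorem to it. That is false in general: $E$ is the union of all compact torus-orbit closures of $X_{\tilde{\Delta}}$ --- the prime divisors $E_i$ attached to the interior lattice points, together possibly with lower-dimensional orbit closures --- glued along invariant subvarieties. It is typically reducible and non-normal, and is not itself a toric variety, so Danilov's evenness theorem does not apply to $E$ directly; nor does a Bia{\l}ynicki-Birula decomposition obviously save the argument, since $E$ is reducible and singular and the attracting sets of a generic one-parameter subgroup on such a space need not be cells. The retraction of $Y$ onto $E$ that you set up is fine (the paper uses the same retraction in the proof of Theorem~\ref{thm:Betti}), but the cohomological input about $E$ has to be assembled piecewise: each irreducible component of $E$, and each intersection of components, is a \emph{complete simplicial} toric variety (the star of a cone of $\tilde{\Delta}$ whose relative interior meets the interior of the support), hence has vanishing odd rational cohomology, and one then runs a Mayer--Vietoris induction over the components to get $H^{2n-3}(E,\Q)=0$; this is exactly the paper's argument (``each $E_i$ has $b_{2n-3}(E_i)=0$ plus Mayer--Vietoris''). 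Alternatively you could avoid $E$ altogether by invoking the extension of Danilov's theorem to simplicial fans whose support is a full-dimensional convex cone, applied to $Y=X_{\tilde{\Delta}}$ itself, but as written your step ``$E$ is a simplicial toric variety, apply Danilov'' is not correct.
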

The proof follows from~\ref{thm:Betti} and the above remarks.  The last equality in (iii) follows from
the fact that if $S$ is the Sasaki link, $X_{\Delta}=C(S)$, then $b_2(S) =d-3$.
The final statement follows from the fact that each exceptional divisor $E_i$ has $b_{2n-3}(E_i) =0$ and an
easy argument with the Mayer-Vietoris sequence.

We are interested in resolutions $X_{\tilde{\Delta}}$ with K\"{a}hler classes, and in particular,
a K\"{a}hler classes in $H^2_c(X_{\tilde{\Delta}},\R)$.  We make some definitions to that end.
\begin{defn}
A real valued function $h:|\Delta|\rightarrow\R$ on the support $|\Delta|:=\cup_{\sigma\in\Delta}\sigma$ is a
\emph{support function} if it is linear on each $\sigma\in\Delta$.  That is, there exist an
$l_{\sigma}\in(\R^n)^*$ for each $\sigma\in\Delta$ so that $h(x)=\langle l_{\sigma},x\rangle$ for $x\in\sigma$,
and $\langle l_\sigma, x\rangle=\langle l_\tau ,x\rangle$ whenever $x\in\tau<\sigma$.
We denote by $\SF(\Delta,\R)$ the additive group of support functions on $\Delta$.
\end{defn}
We will always assume that $|\Delta|$ is a convex cone.
A support function $h\in\SF(\Delta,\R)$ is said to be \emph{convex} if $h(x+y)\geq h(x)+h(y)$ for any
$x,y\in|\Delta|$.  We have for $\sigma\in\Delta(n)$, $\langle l_\sigma, x\rangle \geq h(x)$ for all $x\in|\Delta|$.
If for every $\sigma\in\Delta(n)$, we have equality only for $x\in\sigma$, then $h$ is said to be
\emph{strictly convex}.

Given a strictly convex support function $h\in\SF(\tilde{\Delta})$,  we will associate a rational convex polyhedral
set $\mathcal{C}_h \subset\mathfrak{t}^*$ to $\tilde{\Delta}$ and $h$.  The fan associated to
$\mathcal{C}_h$ as in (\ref{eq:dual-cone-face}) is $\tilde{\Delta}$.
For each $\tau_j \in\tilde{\Delta}(1)$ we have a primitive element $u_j\in\Z_T,j=1,\ldots,N$ as above.
Set $\lambda_i :=h(u_i)$.  Then define
\begin{equation}\label{eq:polyt-def}
\mathcal{C}_h :=\bigcap_{j=1}^{N} \{y\in\mathfrak{t}^* : \langle u_j,y\rangle\geq\lambda_j\}.
\end{equation}
\begin{defn}
A strictly convex support function $h\in\SF(\tilde{\Delta},\R)$ is \emph{compact} if $h(u_j)=0, j=1,\ldots,d$,
where $u_j \in\Z_T ,j=1,\ldots,d,$ are the elements spanning $\Delta$.
\end{defn}

We will make use of a Hamiltonian reduction method of constructing a toric variety associated to a given
polyhedral set $\mathcal{C}_h \subset\mathfrak{t}^*$.  Originally due to Delzant and extended to the
non-compact and singular cases by D. Burns, V. Guillemin, and E. Lerman in~\cite{BurGuiLer} it constructs a
K\"{a}hler structure on $X_{\tilde{\Delta}}$ associated to a convex polyhedral set (\ref{eq:polyt-def}).  See also
\cite{Gui1,Gui2} for more on what is summarized here.

Let $\mathcal{A}:\Z^N \rightarrow\Z_T$ be the $\Z$-linear map with $\mathcal{A}(e_i)=u_i$, where $e_i,i=1,\ldots,N$
is the standard basis of $\Z^N$.  The $\R$-linear extension, also denoted by $\mathcal{A}$,  induces a map
of Lie algebras $\mathcal{A}:\R^N \rightarrow\mathfrak{t}$.  Let $\mathfrak{k}=\ker\mathcal{A}$.  We have an exact
sequence
\begin{equation}\label{eq:lie-alg}
0\rightarrow\mathfrak{k}\overset{\mathcal{B}}{\longrightarrow}\R^N \overset{\mathcal{A}}{\longrightarrow}\mathfrak{t}\rightarrow 0.
\end{equation}
Since $\mathcal{A}$ induces a surjective map of Lie groups if $T^N =\R^{N}/{2\pi\Z^{N}}$ and
$K=\ker\ol{\mathcal{A}}$, we have the exact sequence
\begin{equation}\label{eq:lie-group}
1\rightarrow K\longrightarrow T^N \overset{\ol{\mathcal{A}}}{\longrightarrow} T^n \rightarrow 1.
\end{equation}

The moment map $\Phi$ for the action of $T^N$ on $(\C^N ,\frac{\mathbf{i}}{2}\sum_{j=1}^N dz_j \wedge d\ol{z}_j )$ is
\begin{equation}\label{eq:moment}
\Phi(z) =\sum_{j=1}^N |z_j |^2 e_j^*.
\end{equation}
Then moment map $\Phi_K$ for the action of $K$ on $\C^N$ is the composition
\begin{equation}
\Phi_K =\mathcal{B}^* \circ\Phi,
\end{equation}
where $\mathcal{B}^*:(\R^N)^* \rightarrow\mathfrak{k}^*$ is the adjoint.
Let $\lambda =\sum_{j=1}^N \lambda_j e_j^*$, and $\nu =\mathcal{B}^* (-\lambda)$.  Then
\begin{equation}
M_{\mathcal{C}_h} :=\Phi_K^{-1}(\nu)/K
\end{equation}
is smooth provided $\mathcal{C}_h$ in non-singular as in (\ref{eq:nonsing}).  The K\"{a}hler form on $\C^N$ descends
to a K\"{a}hler form $\omega_h$ on $M_{\mathcal{C}_h}$.

\begin{prop}[\cite{vC3}]
We have $X_{\tilde{\Delta}}\cong M_{\mathcal{C}_h}$ as toric varieties.  Thus $\omega_h$ is a K\"{a}hler
form on $X_{\tilde{\Delta}}$ for any  strictly convex $h\in\SF(\tilde{\Delta},\R)$.

Furthermore, if $h$ is compact, then $[\omega_h ]\in H^2_c (X_{\tilde{\Delta}},\R)$.
The $u_j \in\Int P_{\Delta}, j=d+1,\ldots,N$, correspond to the prime divisors $D_j$ in $E=\pi^{-1}(o)$.
For each $j=d+1,\ldots,N$, let $c_j \in H^2_c (X_{\tilde{\Delta}},\R)$ be
the Poincar\'{e} dual of $[D_j]$ in $H_{2n-2}(X_{\tilde{\Delta}},\R)$.  Then
\[ [\omega]=[\omega_h] =-2\pi\sum_{j=d+1}^N \lambda_j c_j, \]
where $\omega$ is the Ricci-flat K\"{a}hler form of Corollary~\ref{cor:main}.
\end{prop}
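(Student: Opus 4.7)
The plan is to argue in two stages: first show $M_{\mathcal{C}_h}\cong X_{\tilde\Delta}$ as toric varieties with K\"ahler form $\omega_h$, then compute $[\omega_h]$ and specialise to compact $h$. For the first stage, the construction of $M_{\mathcal{C}_h}=\Phi_K^{-1}(\nu)/K$ via \eqref{eq:lie-alg}--\eqref{eq:moment} produces a K\"ahler manifold on which the residual torus $T^n=T^N/K$ acts effectively with moment polyhedron $\mathcal{C}_h$. Since a toric variety is determined up to equivariant isomorphism by its fan, and the fan associated to $\mathcal{C}_h$ in the sense of \eqref{eq:dual-cone-face} is by construction $\tilde\Delta$, this yields $M_{\mathcal{C}_h}\cong X_{\tilde\Delta}$. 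Strict convexity of $h$ ensures that $\nu=\mathcal{B}^*(-\lambda)$ is a regular value of $\Phi_K$; the quotient is then smooth (equivalently, each maximal cone of $\tilde\Delta$ satisfies \eqref{eq:nonsing}) and inherits a well-defined K\"ahler form $\omega_h$ from the flat metric on $\C^N$.

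Second, I would compute $[\omega_h]\in H^2(X_{\tilde\Delta},\R)$ in terms of the support data. Let $D_j\subset X_{\tilde\Delta}$ be the toric divisor corresponding to $\tau_j\in\tilde\Delta(1)$. The standard symplectic-toric formula
\[ [\omega_h] = -2\pi\sum_{j=1}^{N} \lambda_j\,[D_j]^\vee \]
can be derived either by a Kirwan-type equivariant argument on $\C^N$ (the standard K\"ahler form on $\C^N$ admits a canonical $T^N$-equivariant extension, and the reduced class at moment level $\nu$ is read off from $\nu$ under the Kirwan map) or directly by a $\partial\bar\partial$-computation from the symplectic potential built as in \eqref{eq:can-pot}. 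Here $[D_j]^\vee$ denotes the class Poincar\'e-dual to the fundamental cycle of $D_j$; the sign and the factor $-2\pi$ encode the convention $\nu=\mathcal{B}^*(-\lambda)$ and the usual integrality normalization for first Chern classes.

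Third, I would invoke the compactness hypothesis on $h$. It gives $\lambda_j=h(u_j)=0$ for $j=1,\ldots,d$, so the sum collapses to $[\omega_h]=-2\pi\sum_{j=d+1}^{N}\lambda_j c_j$. For $j>d$ the lattice point $u_j$ lies in $\Int P_\Delta$, so $D_j$ is a prime component of the exceptional set $E=\pi^{-1}(o)$ and is therefore compact; its Poincar\'e dual $c_j$ lies in $H^2_c(X_{\tilde\Delta},\R)$, and the formula claimed in the proposition follows. The identification $[\omega]=[\omega_h]$ is then immediate: $\omega_h$ is a K\"ahler representative of a class in $H^2_c(Y,\R)$, and Corollary~\ref{cor:main} supplies the Ricci-flat K\"ahler metric $\omega$ in that class.

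The hard part will be step two: pinning down the sign and the overall constant $-2\pi$ in the formula for $[\omega_h]$. In the compact toric setting this is a classical Duistermaat--Heckman-type computation, but here one must verify that the same formula persists under the generalized Delzant construction for non-compact $X_{\tilde\Delta}$ whose limiting moment image $\mathcal{C}(\mu)$ is singular at the origin, and that the classes $c_j$ for $j>d$ genuinely lift to $H^2_c$ under the natural map $H^2_c(Y,\R)\to H^2(Y,\R)$---a fact that follows formally from the compactness of $E$ and the long exact sequence of the pair $(Y,Y\setminus E)$, but that sits at the conceptual core of the statement.
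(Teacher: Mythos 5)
Nothing to compare on the paper's side: this proposition is imported from~\cite{vC3}, and the present paper only sets up the Burns--Guillemin--Lerman reduction (\ref{eq:lie-alg})--(\ref{eq:moment}) before quoting it, so your reconstruction along the standard Delzant/Guillemin lines is exactly the expected route and its overall architecture is sound. Two points deserve correction or completion. First, you misattribute the role of strict convexity: smoothness of $\Phi_K^{-1}(\nu)/K$ is not a consequence of strict convexity of $h$ but of the hypothesis that $\tilde{\Delta}$ is a nonsingular subdivision, i.e.\ that each maximal cone satisfies (\ref{eq:nonsing}) (simpliciality gives the locally free action, hence the regular value); what strict convexity buys is that the normal fan of $\mathcal{C}_h$ in the sense of (\ref{eq:dual-cone-face}) is exactly $\tilde{\Delta}$ rather than a coarsening, so that the reduced space is the resolution $X_{\tilde{\Delta}}$ itself and not a partial contraction of it --- this is the substantive use of the hypothesis and should be argued, not waved through ``by construction.'' Second, as you acknowledge, the identity $[\omega_h]=-2\pi\sum_j \lambda_j\,[D_j]^\vee$ with its sign and normalization is the real content; in this noncompact setting the cleanest verification is not a Duistermaat--Heckman argument but a pairing computation: since $Y=X_{\tilde{\Delta}}$ retracts onto the compact exceptional set, $H_2(Y,\R)$ is spanned by the torus-invariant compact curves $C_\tau$ corresponding to walls $\tau\in\tilde{\Delta}(n-1)$, and one checks $\int_{C_\tau}\omega_h=2\pi\,\ell(\tau)$, where $\ell(\tau)$ is the lattice length of the corresponding bounded edge of $\mathcal{C}_h$ from (\ref{eq:polyt-def}), against $-2\pi\sum_j\lambda_j (D_j\cdot C_\tau)$ computed from the wall relations; a sanity check on the blow-up of $\C^2$ fixes the sign. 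Finally, for the compact-support claim your appeal to the pair $(Y,Y\setminus E)$ is fine, but note that the $c_j$ are already defined by Poincar\'e--Lefschetz duality $H_{2n-2}(Y,\R)\cong H^2_c(Y,\R)$, and since $H^1$ of the link vanishes here the map $H^2_c(Y,\R)\to H^2(Y,\R)$ is injective, so exhibiting $[\omega_h]$ as the image of $-2\pi\sum_{j>d}\lambda_j c_j$ does legitimately identify it with a class in $H^2_c$, after which Corollary~\ref{cor:main} gives $[\omega]=[\omega_h]$ as you say.
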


\begin{prop}\label{prop:toric-res}
 Let $X=X_{\Delta}$ be a 3-dimensional Gorenstein toric cone variety.  Suppose $\Int{P}_{\Delta}$ contains a lattice points,
 i.e. $X$ is not a terminal singularity.  Then there is a basic lattice triangulation of $P_{\Delta}$ such that the corresponding
 subdivision $\tilde{\Delta}$ admits a compact strictly upper convex support function $h\in\SF(\tilde{\Delta},\R)$.
\end{prop}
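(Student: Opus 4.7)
The plan is to construct the fan $\tilde\Delta$ by coning over a suitable lattice triangulation of the polytope $P_\Delta$, and to realize both the triangulation and its compact strictly upper convex support function simultaneously from the upper envelope of lifts of lattice points under a globally strictly concave height function. The isolated singularity hypothesis enters through a preliminary observation: because each $2$-face of $\Delta$ is non-singular, any two adjacent vertices $u_i,u_{i+1}$ of $P_\Delta$ form a $\Z$-basis of the rank-$2$ sublattice of $H_\gamma\cap\Z_T$ they span. In particular the only lattice points on $\partial P_\Delta$ are the vertices $u_1,\dots,u_d$, and by hypothesis $\Int P_\Delta$ contains at least one additional lattice point.

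I would then fix a smooth, strictly concave function $\phi:P_\Delta\to\R$ with $\phi|_{\partial P_\Delta}=0$ (for example any concave function vanishing on the boundary, e.g.\ a sum $-\sum_e\ell_e\log\ell_e$ over the edge-defining linear forms), perturbed if necessary so that no four lattice points of $P_\Delta$ have coplanar lifts $v\mapsto(v,\phi(v))$. Let $\tilde h:P_\Delta\to\R$ be the concave piecewise-linear function whose graph is the upper boundary of the convex hull in $H_\gamma\times\R$ of $\{(v,\phi(v)):v\in P_\Delta\cap\Z_T\}$, and let $T$ be the subdivision of $P_\Delta$ induced by its regions of linearity.

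The heart of the argument is to show that every lattice point of $P_\Delta$ is a vertex of $T$. For any $w\in P_\Delta\cap\Z_T$ and any non-trivial convex combination $w=\sum t_iv_i$ in other lattice points, strict concavity of $\phi$ gives $\phi(w)>\sum t_i\phi(v_i)$, so $(w,\phi(w))$ lies strictly above every affine function supported by the remaining lifts; hence $(w,\phi(w))$ is a vertex of the upper envelope. Genericity then forces each $2$-cell of $T$ to be a triangle, and since $T$ uses every lattice point of $P_\Delta$, each such triangle is empty. Pick's theorem gives area $1/2$, and the relation $\langle\gamma,u\rangle=-1$ at every vertex $u$ of a triangle shows, by a direct $3\times3$ determinant computation, that the three primitive vectors at the vertices generate $\Z_T$ as a $\Z$-module. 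Hence the cones over the triangles of $T$ form a non-singular subdivision $\tilde\Delta$ of $\Delta$ whose rays are primitive and lie on $H_\gamma$; by Proposition~\ref{prop:CY-cond-toric} the corresponding $\pi:X_{\tilde\Delta}\to X_\Delta$ is crepant.

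The compact support function is then obtained by homogeneity: set $h(ry)=r\tilde h(y)$ for $y\in P_\Delta$ and $r\geq 0$. This is automatically linear on each $3$-cone of $\tilde\Delta$; the compactness condition $h(u_j)=\tilde h(u_j)=0$ holds because $\phi|_{\partial P_\Delta}=0$; and a short computation writing $x_1=r_1y_1$, $x_2=r_2y_2$ with $y_i\in P_\Delta$ converts concavity of $\tilde h$ into superadditivity $h(x_1+x_2)\geq h(x_1)+h(x_2)$, strict whenever $y_1,y_2$ lie across an interior edge of $T$, i.e.\ whenever $x_1,x_2$ lie in distinct maximal cones of $\tilde\Delta$. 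The main obstacle in this plan is the third step---guaranteeing that the upper envelope uses every lattice point of $P_\Delta$ as a vertex---which is precisely why $\phi$ must be globally strictly concave, not merely concave; everything else (the Pick/basic implication and the passage from concavity on $P_\Delta$ to convexity on $|\Delta|$) is routine.
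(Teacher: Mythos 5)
Your construction is correct in substance, but it follows a genuinely different route from the paper's. The paper argues inductively by generalized toric blow-ups: it star-subdivides at one interior lattice point of $P_{\Delta}$ at a time (treating separately the cases where the point lies in the interior of a $3$-cone or of a $2$-cone of the current fan) and extends the support function at each stage by adding a linear form vanishing on the subdivided face and taking a sufficiently small value $\epsilon_{k+1}>0$ on the new ray; strict upper convexity survives for $\epsilon_{k+1}$ small, and compactness is automatic because the values on the original rays are never changed. You instead produce the triangulation and the support function simultaneously as a regular (coherent) subdivision: lift the lattice points of $P_{\Delta}$ to heights vanishing at $u_1,\dots,u_d$ and in strictly concave position at the interior lattice points, take the upper envelope, and homogenize. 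Your route gives a one-step global construction in which strict convexity and the fact that every lattice point is used are visible at once, and your Pick/determinant argument (area $1/2$ plus $\langle\gamma,\cdot\rangle=-1$ at the three vertices giving a $\Z$-basis of $\Z_T$) makes explicit the standard fact, only quoted in the paper, that in dimension two maximal lattice triangulations are basic; the paper's inductive route is more elementary and is the one that extends to the higher-dimensional partial-resolution statement mentioned in the introduction. Both arguments use, implicitly or explicitly, that the only lattice points of $\partial P_{\Delta}$ are the vertices $u_j$, which is where the isolated-singularity hypothesis (in force in the application, Theorem~\ref{thm:main-tor}) enters; you state this, the paper leaves it tacit.

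One technical slip, easily repaired: no function on the closed polygon $P_{\Delta}$ can be strictly concave and vanish identically on $\partial P_{\Delta}$ (along a boundary edge it would be both affine and strictly concave), and your suggested example $-\sum_e \ell_e\log\ell_e$ does not vanish on the boundary. What you actually need is weaker and available: a function concave on $P_{\Delta}$, strictly concave on $\Int P_{\Delta}$, and zero on $\partial P_{\Delta}$ (for instance $\bigl(\prod_e \ell_e\bigr)^{1/d}$), or more simply a direct assignment of finitely many heights, $0$ at the $u_j$ and generic positive values at interior lattice points chosen so that each interior lattice point lifts strictly above the upper hull of the remaining lifts (your genericity perturbation should likewise move only the interior heights, so as not to disturb the zero boundary values). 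With this repair your key step still goes through: if $w\in\Int P_{\Delta}$ and equality held in the concavity inequality, the height function would be affine on a segment through $w$ contained in $\Int P_{\Delta}$, contradicting interior strict concavity. The remaining steps --- emptiness of the triangles, unimodularity, and the homogenization producing a compact, strictly upper convex $h\in\SF(\tilde{\Delta},\R)$ --- are sound, and your strictness claim across interior walls is at the same level of rigor as the paper's own assertion that its $h_{k+1}$ is strictly upper convex for small $\epsilon_{k+1}$.
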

\begin{proof}
 The subdivision of $\Delta$ is attained by a sequence of generalized toric blow-ups.  There is an integral $u\in\Int{P}_{\Delta}$
by hypothesis.  We let $\Delta_1$ be the star subdivision of $\Delta$ with respect to $\tau_u$, the 1-cone spanned by the
primitive element $u\in\Z_T$, obtained as follows.  For each $\sigma\in\Delta(2)$, we let $\sigma_1:=\sigma +\tau_u$ be an
element in $\Delta_1(3)$.  Then $\Delta_1$ consists of each such $\sigma_1$ and all of its faces.
Since $\Delta_1$ is simplical we may define $h_1 \in\SF(\Delta_1 ,\R)$ on $\sigma_1 =\sigma +\tau_u$ to be $l_{\sigma_1} \in M_{\R}$
with $l_{\sigma_1}|_{\sigma} =0$ and $l_{\sigma_1}(\tau_u)=\epsilon_1 >0$.  Then $h_1$ is strictly upper convex.

Inductively, suppose we have a simplical refinement $\Delta_k$ with a compact strictly upper convex $h_k \in\SF(\Delta_k,\R)$.
Choose an integral $u_{k+1} \in\Int{P}_{\Delta}$ that is not contained in an element of $\Delta_k(1)$.
If $u_{k+1}$ is contained in the interior of
$\sigma_k \in\Delta_k(3)$, then we take the star subdivision of $\sigma_k$ with respect to $\tau_{u_{k+1}}$ spanned by $u_{k+1}$.
This is attained by adding the cones of the form $\sigma_{k+1} =\beta +\tau_{u_{k+1}}$ where $\beta<\sigma_k$ is a proper face.
This gives a refinement $\Delta_{k+1}$.  We set $h_{k+1}$ to be equal to $h_k$ outside the cones in the subdivided $\sigma_k$, and
we define $h_{k+1}$ on the subdivided $\sigma_k$ as follows.
Let $l_{\sigma_k}\in M_{\R}$ define $h_k$ on $\sigma_k$.  Then we define
$l_{\sigma_{k+1}}:= l_{\sigma_k} + m$, where $m\in M_{\R}$ is zero on $\beta$ and $m(u_{k+1})=\epsilon_{k+1}>0$.
Then for sufficiently small $\epsilon_{k+1}>0$ this $h_{k+1}$ is strictly upper convex.

Suppose $u_{k+1}$ is contained in the interior of $\tau_k \in\Delta_k(2)$.  Suppose $\tau_k =\R_{\geq 0} n_1 +\R_{\geq 0} n_2$.
Let $\tau_k^1 =\R_{\geq 0} n_1 +\R_{\geq 0} u_{k+1}$ and $\tau_k^2 =\R_{\geq 0} U_{k+1} +\R_{\geq 0} n_2$.  For
$\sigma_k^1,\sigma_k^2 \in\Delta_k(3)$ that have $\tau_k$ as a face, we have $\sigma_k^i =\tau_k +\alpha^i$ with $\alpha^i\in\Delta_k(1)$,
$i=1,2$.  Then we define $\Delta_{k+1}$ by replacing the cones $\sigma_k^1,\sigma_k^2$ with the cones
$\sigma_{k+1}^{ij}:=\tau_k^j +\alpha^i, i,j =1,2$.  We set $h_{k+1}$ to be equal to $h_k$ outside the cones $\sigma_k^1,\sigma_k^2$.
Let $l_{\sigma_k}^i\in M_{\R}$ define $h_k^i$ on $\sigma_k^i$.
On $\sigma_{k+1}^{ij}$ we define $l_{\sigma_{k+1}^{ij}}:= l_{\sigma_k^i} + m$, where $m\in M_{\R}$ is defined by
$m(n_j)=m(\alpha^i) =0$ and $m(u_{k+1})=\epsilon_{k+1}>0$.  Again, for small enough $\epsilon_{k+1}>0$  $h_{k+1}$ is strictly
upper convex.
\end{proof}

Note that the argument in the proof will also give $h\in\SF(\tilde{\Delta},\Q)$.  Thus the resolution
$X_{\tilde{\Delta}}\rightarrow X_{\Delta}$ is projective.

Theorem~\ref{thm:main-tor} now follows from Proposition~\ref{prop:toric-res} and Corollary~\ref{cor:main}.
It is not difficult to see that the only terminal 3-dimensional Gorenstein toric cones are $\C^3$ and the
quadric hypersurface $X=\{z_0^2 +z_1^2 +z_2^2 +z_3^2 =0\}\subset\C^4$.

\subsection{Quotient singularities}

We recall results on resolutions of quotient singularities $\C^n /G$, where $G$ is a finite group.
For $\C^n /G$ to have trivial dualizing sheaf we must have $G\subset SL(n,\C)$.  We will restrict to the
case of $G$ abelian as this will be sufficient for our purposes.  And in this case we may consider
toric resolutions $\pi :\widehat{\C^n /G}\rightarrow\C^n /G$.

Suppose $G$ is a finite abelian group acting on $\C^n$.  We may assume that the action is diagonal.  And assume the
fixed point set of each nontrivial element has $\codim\geq 2$.  Since $G$ acts freely on $(\C^*)^n$,
$T_{\C}:=(\C^*)^n/G$ is an algebraic torus.  The lattice $\Z ^n \subset\R^n$ is the kernel of
$\exp:\R^n\rightarrow T^n\subset T_{\C}$, where $\exp(x_1,\ldots,x_n)=(e^{2\pi ix_1},\ldots, e^{2\pi ix_n})$.
The group of 1-parameter subgroups of $T_{\C}$ is the lattice $\Z_{T}=\exp^{-1}(G)$.  And $G$ is isomorphic to
$\Z_{T} /\Z^n$.  Then $\C^n/G$ is the toric variety $X_{\Delta}$ associated to the fan $\Delta$ in $\Z_{T}$ given by
the cone $\sigma =\R_{\geq 0} e_1 +\cdots +\R_{\geq 0} e_n$ and all of its faces.  By the condition on the fixed
point set each $e_k \in\Z_T$ is primitive.  The dualizing sheaf of $X_{\Delta}$ is trivial precisely when
the support function $\gamma :\R^n \rightarrow\R$, $\gamma(\sum_{k=1}^n x_k e_k)=\sum_{k=1}^n x_k$, is integral, i.e.
$\gamma(\Z_T)\subseteq\Z^n$.

A resolution of $X_{\Delta}$ is given by a nonsingular subdivision $\tilde{\Delta}$ of $\Delta$.  And as above if
$H_\gamma :=\{x:\gamma(x) =1\}\subset\R^n$, then the triviality of the canonical bundle of $X_{\tilde{\Delta}}$ is equivalent to
each $\tau\in\tilde{\Delta}(1)$ being generated by a primitive element in $P_\Delta \cap\Z_T$, where
$P_\Delta =H_\gamma \cap\Delta$.

Suppose $\pi:X_{\tilde{\Delta}}\rightarrow X_\Delta$ is a crepant resolution.  Then for each $\beta\in\tilde{\Delta}(n)$ we
have
\begin{equation}
 \Vol(\sigma \cap\{x:\gamma(x)\leq 1\}): \Vol(\beta \cap\{x:\gamma(x)\leq 1\})
 =\left[ \Z_T :\Z^n \right] :1= |G|.
\end{equation}
Therefore $|\{\beta: \beta\in\tilde{\Delta}(n)\}| =|G|$.  It is well known~\cite{Ro1} that the Euler characteristic of a toric variety
is given by $\chi(X_{\tilde{\Delta}})=|\{\beta: \beta\in\tilde{\Delta}(n)\}|$.
For $n=2$ and $3$ such a subdivision $\tilde{\Delta}$ always exists, but it may not for $n\geq 4$.

Now suppose that $\C^n/G$ has an isolated singularity, that is, each nonzero element of $G$ only fixes the origin.  Then
we have a version of Proposition~\ref{prop:toric-res} for this situation.
\begin{prop}\label{prop:toric-quot-res}
 Suppose $\C^n/G$ is a Gorenstein isolated singularity, so $G\subset SL(n,\C)$, with $G$ is abelian.  If $n=2$ or $3$, then
 $\C^n/G$ has a projective toric crepant resolution $\pi :\widehat{\C^n /G}\rightarrow\C^n /G$.  Furthermore
 $\chi(\widehat{\C^n /G})=|G|$.
\end{prop}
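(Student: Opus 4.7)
The plan is to combine the toric translation already set up in the excerpt with a classical dimension-specific fact about lattice triangulations, and then to extract the Euler characteristic from a volume count.

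First, I would recast everything in toric language as the excerpt already does: $\C^n/G$ is the affine toric variety $X_\Delta$ for the simplicial cone $\sigma=\R_{\geq 0}e_1+\cdots+\R_{\geq 0}e_n$ in $\Z_T$, the Gorenstein condition guarantees an integral support function $\gamma$ with $\gamma(e_i)=1$, and $P_\Delta=\sigma\cap H_\gamma$ is an $(n-1)$-dimensional lattice simplex in the lattice $H_\gamma\cap\Z_T\cong\Z^{n-1}$. As observed in the paragraph preceding the proposition, a toric crepant resolution $\pi:\widehat{\C^n/G}\rightarrow\C^n/G$ is the same data as a basic lattice triangulation $\mathcal{T}$ of $P_\Delta$ (every vertex of $\mathcal{T}$ is a lattice point, and each maximal simplex has vertices spanning the sublattice). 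So the whole problem reduces to producing such a $\mathcal{T}$ and arranging projectivity.

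For $n=2$, $P_\Delta$ is a one-dimensional lattice segment; subdividing at every interior lattice point visibly gives a basic triangulation, and this is the only step needed. For $n=3$, $P_\Delta$ is a two-dimensional lattice polygon (in fact a triangle), and here I would invoke the classical fact that any maximal lattice triangulation of a lattice polygon is basic: by Pick's theorem a lattice triangle with no lattice points other than its three vertices has area $1/2$, which is the basic triangle area in $\Z^2$. Thus simply taking any triangulation of $P_\Delta$ that uses every lattice point of $\Z_T\cap P_\Delta$ as a vertex (these always exist by a greedy construction) gives the desired basic triangulation. This is precisely the step that fails for $n\geq 4$, and so is the main conceptual obstruction of the proof; it is dispatched cheaply only because the excerpt restricts to $n\leq 3$.

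Next, to ensure the resolution is projective I would build the triangulation by an iterated star-subdivision argument identical to the one used in Proposition~\ref{prop:toric-res}. Start from the trivial support function on $\Delta$, and at each step add one new lattice point $u_{k+1}\in\Z_T\cap P_\Delta$ to the current fan by either starring a top-dimensional cone or starring a wall; update the current strictly upper convex support function $h_k$ by adding an integral linear form that vanishes on the old face and takes a small positive rational value $\epsilon_{k+1}$ at $u_{k+1}$. For sufficiently small $\epsilon_{k+1}$ strict convexity is preserved, and since the construction takes place inside $P_\Delta$ the resulting function is compact. The endpoint is a basic subdivision $\tilde{\Delta}$ with $h\in\SF(\tilde{\Delta},\Q)$ strictly convex, so $X_{\tilde\Delta}\rightarrow X_\Delta$ is projective.

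Finally, for the Euler characteristic I would use the standard fact (the reference~\cite{Ro1} is cited in the preceding paragraph) that a smooth toric variety has $\chi$ equal to the number of its top-dimensional cones. Every top-dimensional $\beta\in\tilde{\Delta}(n)$ is basic, and the volume ratio
\begin{equation*}
\mathrm{Vol}\!\left(\sigma\cap\{\gamma\leq 1\}\right) :\mathrm{Vol}\!\left(\beta\cap\{\gamma\leq 1\}\right)
=[\Z_T:\Z^n]:1=|G|
\end{equation*}
shown in the excerpt immediately before the proposition forces $|\tilde{\Delta}(n)|=|G|$, giving $\chi(\widehat{\C^n/G})=|G|$. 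This last step is essentially a bookkeeping computation, once the resolution has been constructed.
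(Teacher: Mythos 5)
Your proposal is correct and takes essentially the same route as the paper's (implicit) argument: reduce to the existence of a basic lattice triangulation of $P_\Delta$ (which in dimension $\leq 2$ is the same as a maximal one), obtain projectivity from the star-subdivision construction of a compact strictly convex rational support function exactly as in Proposition~\ref{prop:toric-res}, and deduce $\chi(\widehat{\C^n/G})=|G|$ from the lattice-volume count together with the fact that the Euler characteristic equals the number of maximal cones. The only difference is cosmetic: you make explicit, via Pick's theorem, why maximal lattice triangulations of a lattice polygon are basic, a fact the paper merely cites in its remark that maximality implies basicness only in dimension two.
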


Note that for $n=2$ and $3$ the Betti numbers of $\widehat{\C^n /G}$ are known.  For $n=3$ we have $b_2 =b_4 =\frac{1}{2}(\chi -1)$.

\section{Toric examples}

\subsection{Toric 3-dimensional examples}

Toric Ricci-flat K\"{a}hler cones are known in abundance.  The first examples of non-regular toric Sasaki-Einstein
manifolds appeared in~\cite{GMSW3} with the metrics given explicitly in~\cite{GMSW1}.  As stated in Theorem~\ref{thm:FOW}
the general existence of Sasaki-Einstein metrics on toric Sasaki manifolds was solved in~\cite{FOW}.  Thus every
$\Q$-Gorenstein toric K\"{a}hler cone admits a Ricci-flat K\"{a}hler cone metric.
See~\cite{CFO} and also~\cite{vC3,vC1} for the construction of infinite series of examples.

The topology of simply connected toric Sasaki-Einstein 5-manifolds is very restricted.  The
first result due to H. Oh~\cite{Oh} determines the homology.
\begin{lem}
Let $S$ be a simply connected 5-manifold with an effective $T^3$-action.  If $S$ has $k$ different
$S^1$ stabilizer subgroups, then $H_2(S,\Z) =\Z^{k-3}$.
\end{lem}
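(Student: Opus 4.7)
The plan is to analyze $S$ through the orbit map $\pi : S \to Q := S/T^3$. Since the action is effective on a 5-manifold, $\dim Q = 2$, and points of $Q$ are stratified by the dimension of their stabilizer: the principal stratum has trivial stabilizer, the $1$-dimensional strata carry one of the $k$ distinct circle stabilizers $\gamma_1,\dots,\gamma_k\subset T^3$, and the $0$-dimensional strata carry a $2$-dimensional stabilizer. The first step is to show that $Q$ is homeomorphic to a closed 2-disk whose boundary is divided into exactly $k$ arcs $e_1,\dots,e_k$ by $k$ vertices $v_1,\dots,v_k$, with each arc $e_i$ labeled by $\gamma_i$ and each vertex $v_i$ carrying stabilizer $T^2_i = \langle \gamma_i,\gamma_{i+1}\rangle$ (indices mod $k$). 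The hypothesis $\pi_1(S)=0$, combined with the homotopy exact sequence for the stratified quotient $\pi$, rules out interior singular orbits and forces $Q$ to be simply connected; smoothness of $S$ over $v_i$ forces each consecutive pair $\{\gamma_i,\gamma_{i+1}\}$ to span a primitive rank-$2$ sublattice of $\Lambda := \pi_1(T^3)\cong\Z^3$.

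With this combinatorial data in hand, $S$ can be reconstructed equivariantly as $(Q\times T^3)/\!\sim$, where $\sim$ collapses the circle $\gamma_i\subset T^3$ over each interior point of $e_i$ and both circles $\gamma_i,\gamma_{i+1}$ over each vertex $v_i$. I would then compute $H_*(S;\Z)$ by a Mayer--Vietoris decomposition $S = U \cup V$, where $U = \pi^{-1}(Q^\circ)$ is a principal $T^3$-bundle over the contractible open disk $Q^\circ$ and hence retracts to a single orbit $T^3$, while $V$ is the preimage of a collar of $\partial Q$, which retracts onto the singular locus $X := \pi^{-1}(\partial Q)$. A local computation shows that each $\pi^{-1}(\bar e_i)$ is a $3$-sphere with a linear $T^3$-action whose two ``axis circles'' are the vertex fibres over $v_{i-1}$ and $v_i$; thus $X$ is the cyclic union of $k$ such $3$-spheres glued along the circles $\pi^{-1}(v_i)$, and its homology is accessible by a further Mayer--Vietoris argument over this cycle of spaces.

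The main obstacle is identifying the connecting homomorphisms. After carrying out the computations of $H_*(U)=H_*(T^3)$, $H_*(U\cap V)=H_*(T^3\times S^1)$, and $H_*(X)$, the relevant portion of the long exact sequence reduces to
\begin{equation*}
0 \longrightarrow H_2(S;\Z) \longrightarrow \Z^k \xrightarrow{\ \phi\ } \Lambda \longrightarrow H_1(S;\Z) \longrightarrow 0,
\end{equation*}
in which $\phi$ is the lattice map sending the generator $e_i$ to the primitive stabilizer vector $\gamma_i\in\Lambda$. A Van Kampen argument applied to the same decomposition, together with the hypothesis $\pi_1(S)=0$, forces $H_1(S;\Z)=0$ and hence $\phi$ surjective: a nontrivial cokernel would correspond to a proper subtorus of $T^3$ acting almost-freely on a nontrivial cover of $S$, contradicting simple connectedness. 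Therefore $\ker\phi$ is free abelian of rank $k-3$, and the exact sequence identifies $H_2(S;\Z)\cong\ker\phi\cong\Z^{k-3}$, which in particular shows torsion-freeness and establishes the claim.
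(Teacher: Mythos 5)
The paper does not prove this lemma at all --- it is quoted from H.~Oh's classification of $T^3$-actions on $5$-manifolds (\cite{Oh}), so there is no internal argument to compare against; your orbit-space-plus-Mayer--Vietoris strategy is indeed the standard route by which such results are established. However, as written your sketch has a concrete error and, because of it, a genuine gap at exactly the step you yourself call the main obstacle. The claim that each $\pi^{-1}(\bar e_i)$ is a $3$-sphere is false in general: $\pi^{-1}(\bar e_i)$ is $T^2\times[0,1]$ (with $T^2=T^3/\gamma_i$) with the image circle of $\gamma_{i-1}$ collapsed at one end and that of $\gamma_{i+1}$ at the other, i.e.\ a lens space $L(p_i,q_i)$ with $p_i=|\det(\gamma_{i-1},\gamma_i,\gamma_{i+1})|$. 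Smoothness only forces each \emph{adjacent} pair $\{\gamma_i,\gamma_{i+1}\}$ to span a primitive rank-$2$ sublattice; it does not make consecutive triples unimodular. Already for the cones $C(S^{p,q})$ appearing in this paper, with normals $(0,0,1),(1,0,1),(p,p,1),(p-q-1,p-q,1)$, one has $\det(u_1,u_2,u_3)=p$, so the corresponding piece of $X=\pi^{-1}(\partial Q)$ is a nontrivial lens space. Consequently $H_1$ of the pieces of $X$ carries torsion $\Lambda/\langle\gamma_{i-1},\gamma_i,\gamma_{i+1}\rangle$, and your asserted reduction of the Mayer--Vietoris sequence for $S=U\cup V$ to
\begin{equation*}
0 \longrightarrow H_2(S;\Z) \longrightarrow \Z^k \xrightarrow{\ \phi\ } \Lambda \longrightarrow H_1(S;\Z) \longrightarrow 0
\end{equation*}
is precisely what needs proof: one must compute $H_1(X)$ and $H_2(X)$ from the cyclic necklace of lens spaces, identify the maps from $H_*(T^3\times S^1)$ into $H_*(T^3)\oplus H_*(X)$, and show that all the lens-space torsion is absorbed, since the torsion-freeness of $H_2(S)$ is part of the statement being proved. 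As it stands you have asserted the outcome (which is the content of Oh's theorem) rather than derived it.

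Two smaller points. First, once the displayed sequence is granted, the surjectivity of $\phi$ is immediate from $H_1(S)=\operatorname{coker}\phi$ and $H_1(S)=0$ (abelianization of $\pi_1$); the additional ``Van Kampen / subtorus acting on a cover'' argument is redundant and, as phrased, not a proof of anything new. Second, your $\Z^k$ is indexed by the boundary arcs of $Q$, whereas the lemma counts \emph{distinct} $S^1$ stabilizer subgroups; you should either show that distinct arcs carry distinct stabilizer circles under the simple-connectivity hypothesis or note why the two counts agree in the situation at hand (in the toric Sasaki application they do, since distinct facets of a strictly convex cone have distinct primitive normals). The structural facts you invoke at the start (no interior exceptional orbits, $Q$ a disk with the described boundary stratification) are correct but are themselves nontrivial parts of Oh's analysis and would need proof or citation in a complete write-up.
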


A simply connected Sasaki-Einstein manifold is spin.  Thus the classification of smooth simply connected spin
5-manifolds of S. Smale~\cite{Sma} gives the following.
\begin{thm}
If $S$ is a simply connected toric Sasaki-Einstein 5-manifold with $k$ $S^1$ stabilizer subgroups, then
$S$ is diffeomorphic to $\#(k-3)(S^2 \times S^3)$.
\end{thm}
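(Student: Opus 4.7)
The plan is to invoke Smale's classification of closed simply connected smooth $5$-manifolds, after verifying two hypotheses: that $S$ is spin, and that $H_2(S,\Z)$ is torsion-free. Both should be essentially immediate from what has already been developed.

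First I would check that $S$ is spin. Since $S$ is Sasaki-Einstein, Proposition~\ref{prop:CY-cond} applies and in particular gives $c_1(D)=0$ for the contact distribution $D=\ker\eta$. As a smooth real vector bundle, $TS\cong D\oplus\R\xi$, and $D$ carries the compatible complex structure $\Phi|_D$. Therefore
\[
w_2(TS)=w_2(D)=c_1(D)\bmod 2=0,
\]
so $S$ is spin. Second, Oh's lemma (stated immediately above the theorem) shows that a simply connected $5$-manifold with an effective $T^3$-action and $k$ distinct $S^1$ stabilizer subgroups satisfies $H_2(S,\Z)\cong\Z^{k-3}$, which is free. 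Here both hypotheses of Oh's lemma are exactly the toric Sasaki hypotheses of the theorem.

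With these two ingredients in place, Smale's theorem classifies closed simply connected smooth spin $5$-manifolds with torsion-free $H_2$ up to diffeomorphism by the rank $b_2$; every such manifold is diffeomorphic to a connected sum of $b_2$ copies of $S^2\times S^3$. Substituting $b_2(S)=k-3$ from Oh's lemma yields
\[
S\;\overset{\mathrm{diff}}{\cong}\;\#(k-3)(S^2\times S^3),
\]
as required.

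I do not anticipate a real obstacle: the proof is essentially the assembly of two cited classification results (Oh and Smale) with the short spin-verification above. The only place that warrants a careful sentence is the identification $w_2(D)\equiv c_1(D)\bmod 2$, which uses only that $D$ admits the complex structure $\Phi|_D$; everything else is a direct invocation of known theorems.
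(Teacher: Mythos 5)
Your argument is correct and is essentially the paper's own proof: the paper likewise combines Oh's lemma with Smale's classification, simply asserting that a simply connected Sasaki-Einstein $5$-manifold is spin, which you verify explicitly via $TS\cong D\oplus\R\xi$ and $w_2(D)\equiv c_1(D)\bmod 2$ together with Proposition~\ref{prop:CY-cond}. The only point worth a word is that $c_1(D)=0$ a priori holds as a real class, but since $H^2(S,\Z)$ is torsion-free here this gives $w_2=0$ as needed.
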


The first non-regular toric Sasaki-Einstein manifolds were given in a series of examples $S^{p,q}$, with $p,q\in\N, p>q>0$
and $\gcd(p,q)=1$, due to J. Gauntlett, D. Martelli, J. Sparks and D. Waldram~\cite{GMSW1}.
These are all diffeomorphic to $S^2 \times S^3$ and include the first examples of irregular Sasaki-Einstein manifolds.
In~\cite{CFO} and~\cite{vC3,vC1} infinite series of toric Sasaki-Einstein manifolds are constructed.
Together these give infinitely many examples for each $b_2(S)\geq 1$.
From Theorem~\ref{thm:main-tor} we get the following.
\begin{thm}
For each $m\geq 1$, there exist infinitely many toric asymptotically conical Ricci-flat K\"{a}hler manifolds $Y$
asymptotic to a cone over a Sasaki-Einstein structure on $\#m(S^2 \times S^3)$.  For each $m\geq 1$, the
Betti numbers, $b_2(Y)=m+c(X),\ b_4(Y)=c(X)$, of the $Y$ become arbitrarily large.
\end{thm}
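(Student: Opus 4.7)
The plan is to assemble the theorem by stringing together three pieces already in hand: an abundance principle for the Sasaki side, the resolution theorem on the cone side, and the Betti number formula for the resolution. I would proceed as follows.

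First, I would fix $m\geq 1$ and invoke the infinite families of simply connected toric Sasaki--Einstein $5$-manifolds constructed in~\cite{CFO,vC3,vC1} together with Theorem~\ref{thm:FOW}: these give infinitely many (pairwise non-isomorphic as toric varieties) Gorenstein toric Kähler cones $X=C(S)$ with $b_2(S)=m$. By the preceding lemma of Oh, this $b_2$ value corresponds to $k=m+3$ distinct $S^1$-stabilizers in the toric action, and by Smale's classification the Sasaki link is diffeomorphic to $\#m(S^2\times S^3)$. I would note that in terms of the lattice polytope $P_{\Delta}$ of Section~\ref{subsec:tor-res}, the condition $b_2(S)=m$ is equivalent to $P_{\Delta}$ having $d=m+3$ vertices on the boundary.

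Second, for each such $X=X_{\Delta}$, as soon as $X$ is not the quadric threefold the hypothesis of Theorem~\ref{thm:main-tor} applies, producing a toric crepant resolution $\pi:Y\rightarrow X$ carrying a $T^3$-invariant Ricci-flat Kähler metric asymptotic to $\bar{g}$ in the sense of (\ref{eq:asymp}). (The quadric case only arises for the single choice $m=2$ with $P_{\Delta}$ a unit square having no interior lattice point, so it can be discarded without affecting the infinitude in any fixed $m$.) Then Theorem~\ref{thm:toric-Betti} together with the identity (\ref{eq:Betti}) gives
\[
b_2(Y)=b_2(S)+b_4(Y)=m+c(X),\qquad b_4(Y)=c(X)=|\Z_T\cap\Int P_{\Delta}|,
\]
which is the claimed formula.

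Third, I must show that within each fixed $m$ the quantity $c(X)=|\Z_T\cap\Int P_{\Delta}|$ can be made arbitrarily large. The most direct route is an explicit construction: starting from any $P_{\Delta}$ realizing $b_2(S)=m$, I would produce a sequence $P_{\Delta_N}$ of lattice polygons in the affine hyperplane $H_{\gamma}$ still having exactly $m+3$ boundary vertices (so the combinatorics of $\Delta(1)_{\text{boundary}}$ is preserved) but whose areas—and hence interior lattice point counts, by Pick's theorem—grow without bound. For example, stretching one vertex of $P_{\Delta}$ farther and farther along a ray while keeping primitivity at each step yields infinitely many non-isomorphic Gorenstein cones with the same boundary combinatorial type and with $c(X_{\Delta_N})\to\infty$. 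Each such cone has a Sasaki--Einstein link by Theorem~\ref{thm:FOW} and a Ricci-flat Kähler resolution by Theorem~\ref{thm:main-tor}, completing the proof.

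The step I expect to require the most care is the last one: simultaneously controlling the boundary combinatorics (to keep $b_2(S)=m$ and hence the diffeomorphism type $\#m(S^2\times S^3)$ fixed), ensuring the links remain smooth simply connected manifolds (primitivity and the non-singularity condition (\ref{eq:nonsing}) along boundary faces), and driving $|\Z_T\cap\Int P_{\Delta_N}|\to\infty$. Once the explicit stretching construction is verified to satisfy these constraints—which is essentially a lattice geometry check—the rest of the argument is an invocation of the theorems already established.
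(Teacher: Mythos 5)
Your first two steps are exactly the paper's (very short) argument: quote the infinite families of simply connected toric Sasaki--Einstein $5$-manifolds with $b_2(S)=m$ from~\cite{vC1,vC3,CFO}, identify the link via Oh's lemma and Smale's theorem, apply Theorem~\ref{thm:main-tor} to each non-quadric cone, and read off the Betti numbers from Theorem~\ref{thm:toric-Betti} and (\ref{eq:Betti}). (Minor slip: the quadric has $d=4$ facets, so it occurs at $m=1$, not $m=2$; its link is $S^2\times S^3$. This does not matter, since it is discarded either way.) Your instinct that the ``Betti numbers become arbitrarily large'' clause needs a separate argument for fixed $m$ is sound, and is more than the paper makes explicit.

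The genuine gap is in the verification criteria for your stretching construction. You claim that keeping the edges primitive, i.e.\ the non-singularity condition (\ref{eq:nonsing}) on the boundary faces, ensures the links remain ``smooth simply connected manifolds.'' It ensures smoothness only: for a toric cone one has $\pi_1(S)\cong \Z_T/\langle u_1,\dots,u_d\rangle$, so simple connectivity requires in addition that the ray generators span the full lattice, and this is \emph{not} implied by primitivity of the edges. For instance the triangle with vertices $(0,0,1),(2,1,1),(1,2,1)$ has all edges primitive (every boundary face of the cone is non-singular), yet the cone is $\C^3/\Z_3$ and the link is $S^5/\Z_3$; a quadrilateral example is the one with edge vectors $(1,-3),(1,3),(-1,3),(-1,-3)$, all primitive, whose link has $\pi_1\cong\Z_6$, so it is not $\#1(S^2\times S^3)$ even though $d=4$. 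Thus in your construction you must also track, at every stretch, that the moved vertex together with the fixed ones generates $\Z_T$ (for the $Y^{p,q}$ polytope, three of the four vertices generate only an index-$p$ sublattice, so this condition genuinely depends on the vertex being moved), and you must still verify the number-theoretic claim that both new edges can be kept primitive for infinitely many stretch parameters. Both points are repairable, but neither is the routine check you describe. Note also that the unboundedness of $c(X)$ can be had without any new construction: among infinitely many pairwise inequivalent lattice polygons with exactly $m+3$ boundary lattice points the interior lattice point count must be unbounded, since up to unimodular equivalence there are only finitely many lattice polygons with a given positive number of interior points; alternatively it is visible in the cited families, e.g.\ $c\bigl(C(S^{p,q})\bigr)=p-1$ for $m=1$.
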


\subsection{Resolutions of $C(S^{p,q})$}

We now give some details on the Sasaki-Einstein manifolds $S^{p,q}$, the cones $C(S^{p,q})$, and their
resolutions.  The series $S^{p,q}$, where $p,q\in\N, p>q>0$, and $\gcd(p,q)=1$, first appeared
in~\cite{GMSW3}.  These examples are remarkable in that they contain the first known examples of irregular
Sasaki-Einstein, and also because the metrics are given explicitly (cf.~\cite{GMSW1}).
They appeared as a byproduct of a search for supersymmetric solutions of $D=11$ supergravity.

These examples are diffeomorphic to $S^2 \times S^3$, are toric, and are of cohomogeneity one with an isometry
group of $SO(3)\times U(1)\times U(1)$ if $p,q$ are both odd, and $U(2)\times U(1)$ otherwise.
The Sasaki structure is quasi-regular precisely when $p,q\in\N$ as above satisfy the diophantine equation
\begin{equation}\label{eq:dioph}
4p^2 -3q^2 =r^2,
\end{equation}
for some $r\in\Z$.  It was shown in~\cite{GMSW1} that there are both infinitely many quasi-regular and irregular
examples.

The cone $X_{\Delta} =C(S^{p,q})\cup\{o\}$ is given by the fan $\Delta$ in $\Z^3$ generated by the four vectors
\begin{equation}
u_1 =(0,0,1), u_2 =(1,0,1), u_3 =(p,p,1), u_4 =(p-q-1,p-q,1).
\end{equation}
A basic lattice triangulation of $P_{\Delta}$ can be constructed for general $p,q$ as is shown in Figure~\ref{fig1}
for $C(S^{5,3})$.  We denote the resolved toric manifold by $Y^{p,q}$.  It is not difficult to see that the subdivision
$\tilde{\Delta}$ of $\Delta$ has a compact strictly
convex support function.  Thus Corollary~\ref{cor:main} gives a $p-1$-dimensional family of asymptotically conical
Ricci-flat K\"{a}hler metrics on $Y^{p,q}$.  Note that the crepant resolution $Y^{p,q}$ of $C(S^{p,q})$ is unique.
And by (\ref{eq:dioph}) there are infinitely many examples $Y^{p,q}$ asymptotic to cones over irregular
Sasaki-Einstein manifolds.

\begin{figure}[htb]
 \centering
 \includegraphics[scale=0.4]{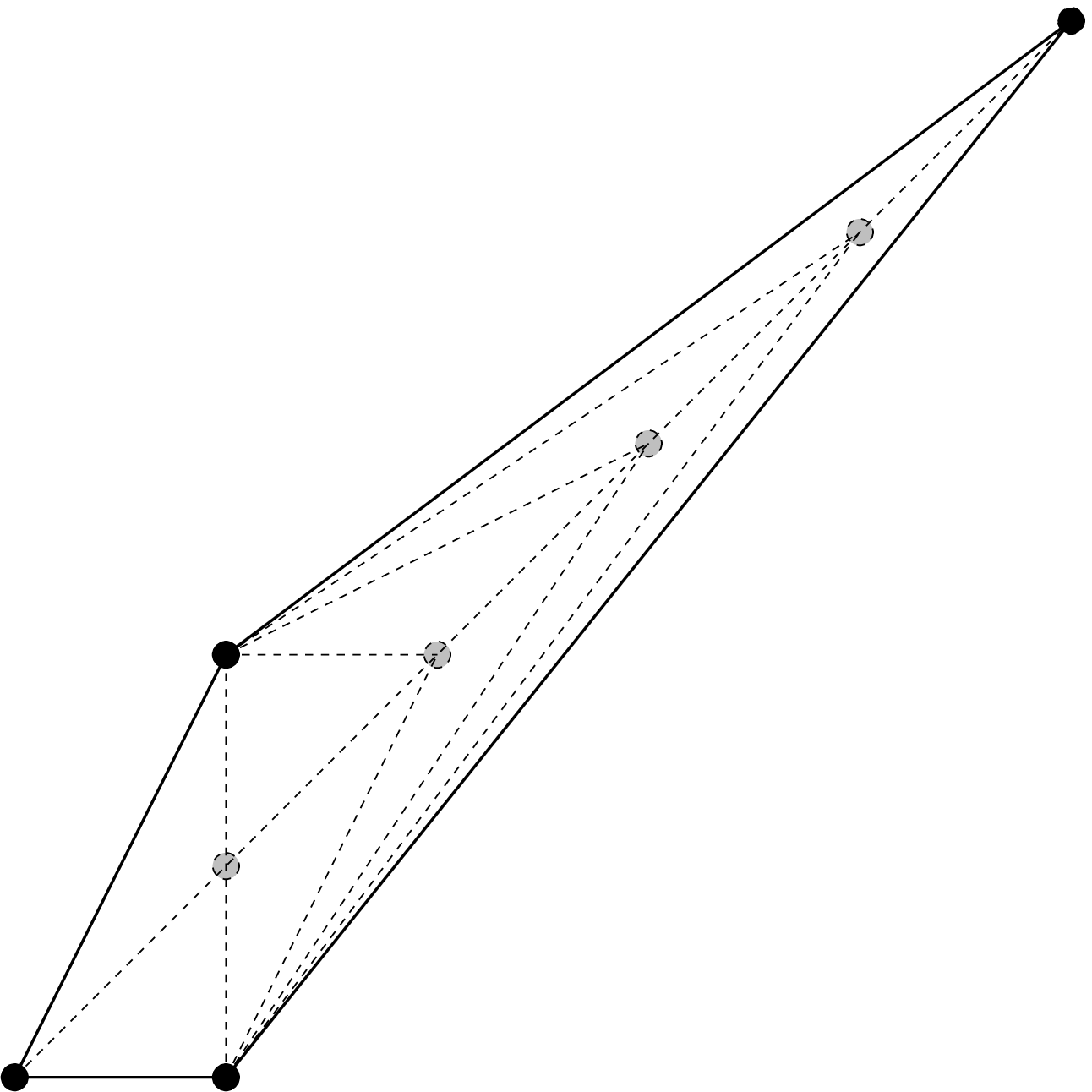}
 \caption{$Y^{5,3}$}
 \label{fig1}
\end{figure}

\section{Hypersurface singularities}

\subsection{Sasaki structures}

We will consider isolated hypersurface singularities defined by quasi-homogeneous polynomials.
Let $\mathbf{w}=(w_0,\ldots, w_n )\in(\Z_+)^{n+1}$ with $\gcd(w_0,\ldots, w_n )=1$.  We have the weighted $\C^*$-action
$\C^*(\mathbf{w})$ on $\C^{n+1}$ given by $(z_0,\ldots,z_n)\rightarrow (\lambda^{w_0}z_0,\ldots,\lambda^{w_n}z_n)$ with weights $w_j$.
A polynomial $f\in\C[z_0,\ldots,z_n]$ is quasi-homogeneous of degree $d\in\Z_+$ if
\begin{equation}
 f(\lambda^{w_0}z_0,\cdots,\lambda^{w_n}z_n) =\lambda^d f(z_0,\ldots,z_n).
\end{equation}
The hypersurface $X_f =\{f=0\}\subset\C^{n+1}$ is called the \emph{weighted affine cone}.  We will assume that the origin
is an isolated singularity.  Then the link
\begin{equation}
 S_f =X_f \cap S^{2n+1},
\end{equation}
where $S^{2n+1}=\{(z_0,\ldots,z_n)\in\C^{n+1} : \sum_{j=0}^n |z_j|^2 =1 \}$ is the unit sphere, is a smooth $(2n-1)$-dimensional
manifold.  Much is known about the topology of $S_f$.  For example, it is proved in~\cite{Mil} that $S_f$ is $(n-2)$-connected.

We have the \emph{weighted} Sasaki structure $(\xi_{\mathbf w}, \eta_{\mathbf w}, \Phi_{\mathbf w}, g_{\mathbf w})$, as in
Example~\ref{ex:Sasak-sph}, on $S^{2n+1}$ for which the Reeb vector field $\xi_{\mathbf w}$ generates the $S^1$-action induced by the above weighted action.  This Sasaki structure will be denoted by $S^{2n+1}_{\mathbf w}$; and, as explained in Example~\ref{ex:Sasak-sph},
the K\"{a}hler cone is $C(S^{2n+1}_{\mathbf w})=\C^{n+1}$ as a complex manifold, but the Euler vector field and potential $r$ are
not the usual ones.

Given a sequence of weights $\mathbf{w}=(w_0,\ldots, w_n )$ as above, we have the graded polynomial ring
$S(\mathbf{w})=\C[z_0,\ldots,z_n]$, where $z_j$ has weight $w_j$.  The \emph{weighted projective space}
$\cps(\mathbf{w})=\cps(w_0,\ldots,w_n)$ is the scheme
$\operatorname{Proj}(S(\mathbf{w}))$ (cf.~\cite{Dol,BeRo}).  Geometrically it is the quotient $(\C^{n+1}\setminus\{0\})/\C^*(\mathbf{w})$.
Equivalently, it is the quotient of $S^{2n+1}_{\mathbf w} =(\xi_{\mathbf w}, \eta_{\mathbf w}, \Phi_{\mathbf w}, g_{\mathbf w})$ by
the weighted circle action $S^1(\mathbf{w})$ given by restricting the $\C^*(\mathbf{w})$-action.  Thus
$\cps(\mathbf{w})$ is a compact complex K\"{a}hler orbifold.

Alternatively, one can use results of~\cite{MSY} to construct a K\"{a}hler cone metric on $\C^{n+1}$ with any Reeb vector field
$Jr\partial_r =\xi=\sum_{j=0}^n b_j\frac{\partial}{\partial\phi_j}$ with $b_j >0,j=0,\ldots,n$, where the $\frac{\partial}{\partial\phi_j}$
generate the $T^{n+1}$-action on $\C^{n+1}$, $(z_0,\ldots,z_n)\rightarrow(e^{i\phi_0}z_0,\ldots,e^{i\phi_n}z_n)$ for
$(\phi_0,\ldots,\phi_n)\in T^{n+1}$.  Then if $(b_0,\ldots,b_n)=(w_0,\ldots,w_n)\in(\Z_+)^{n+1}$, we have a K\"{a}hler cone structure on
$\C^{n+1}$ with Sasaki structure on $S^{2n+1}$ which is equivalent to $S^{2n+1}_{\mathbf{w}}$ up to a transversal K\"{a}hler
deformation.

Now if $f\in\C[z_0,\ldots,z_n]$ is quasi-homogeneous, then $Z_f :=\{[z_0:\cdots:z_n]: f(z_0,\ldots,z_n)=0\}\subset\cps(\mathbf{w})$.
We have the following from~\cite{BG1}.  We will say that $Z_f$ is \emph{quasi-smooth} if the affine cone $X_f$ is smooth outside the origin.
\begin{lem}[\cite{BG1}]\label{lem:hyp-Sasak}
 The link $S_f$ has a quasi-regular Sasaki structure induced from $(\xi_{\mathbf w}, \eta_{\mathbf w}, \Phi_{\mathbf w}, g_{\mathbf w})$
on $S^{2n+1}$ such that we have the commutative diagram
\begin{equation*}
 \begin{CD}
  S_f @>>> S^{2n+1}_{\mathbf{w}} \\
  @VV{\pi}V   @VVV\\
  Z_f @>>> \cps(\mathbf{w})
 \end{CD}
\end{equation*}
where the horizontal maps are Sasakian and K\"{a}hlerian embeddings respectively, and the vertical maps are $S^1$ V-bundle maps and
Riemannian submersions.
\end{lem}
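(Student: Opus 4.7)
The plan is to realize $S_f$ as the Sasakian link of the K\"{a}hler cone $X_f\setminus\{o\}$, which is obtained by restricting the weighted K\"{a}hler cone structure on $C(S^{2n+1}_{\mathbf{w}}) \cong \C^{n+1}\setminus\{o\}$ to the complex submanifold $X_f\setminus\{o\}$. The key observation is that the quasi-homogeneity of $f$ makes $X_f$ invariant under the entire $\C^*(\mathbf{w})$-action on $\C^{n+1}$, so that both the weighted Euler field $r_{\mathbf{w}}\partial_{r_{\mathbf{w}}}$ and the Reeb field $\xi_{\mathbf{w}}=J(r_{\mathbf{w}}\partial_{r_{\mathbf{w}}})$ are tangent to $X_f\setminus\{o\}$.

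First I would verify that $S_f$ is a smooth $(2n-1)$-manifold on which $\xi_{\mathbf{w}}$ is nowhere vanishing. Smoothness of $X_f\setminus\{o\}$ is exactly the definition of quasi-smoothness of $Z_f$. Since all $w_j>0$, the standard unit sphere $S^{2n+1}$ is transverse to the weighted Euler flow (the weighted Euler field has strictly positive inner product with the standard outward normal on $S^{2n+1}$), and $X_f\setminus\{o\}$ is a union of orbits of that flow, so $S_f=X_f\cap S^{2n+1}$ is a smooth cross-section of the cone $X_f\setminus\{o\}$. The $S^1(\mathbf{w})$-action generated by $\xi_{\mathbf{w}}$ preserves both $S^{2n+1}$ (rotations preserve the standard norm) and $X_f$ (by quasi-homogeneity), hence preserves $S_f$, so $\xi_{\mathbf{w}}|_{S_f}$ is a nowhere vanishing tangent vector field.

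Next I would show that the restriction $(\xi_{\mathbf{w}}|_{S_f},\eta_{\mathbf{w}}|_{S_f},\Phi_{\mathbf{w}}|_{S_f},g_{\mathbf{w}}|_{S_f})$ is a Sasaki structure. Because $X_f\setminus\{o\}$ is a complex submanifold of $C(S^{2n+1}_{\mathbf{w}})$ invariant under $r_{\mathbf{w}}\partial_{r_{\mathbf{w}}}$, the restriction of the ambient K\"{a}hler form $\omega_{\mathbf{w}}=\tfrac{1}{2}dd^c r_{\mathbf{w}}^2$ is a K\"{a}hler form on $X_f\setminus\{o\}$ with potential $\tfrac{1}{2}r_{\mathbf{w}}^2$, homogeneous of degree $2$ with respect to the restricted $r_{\mathbf{w}}\partial_{r_{\mathbf{w}}}$. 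Thus $X_f\setminus\{o\}$ is itself a K\"{a}hler cone, and $S_f$ is its Sasaki link, identified with the cross-section through the weighted radial flow; the resulting Sasaki data are precisely the restrictions of the ambient Sasaki data on $S^{2n+1}_{\mathbf{w}}$.

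Finally I would verify quasi-regularity and the commutative diagram. Since $\mathbf{w}\in(\Z_+)^{n+1}$, $\xi_{\mathbf{w}}$ generates a closed $S^1(\mathbf{w})$-action on $S^{2n+1}$ whose restriction to $S_f$ is locally free (the isotropy groups are all finite subgroups of cyclic groups), so the Sasaki structure on $S_f$ is quasi-regular, with leaf space $Z_f=S_f/S^1(\mathbf{w})$ sitting inside $\cps(\mathbf{w})=S^{2n+1}/S^1(\mathbf{w})$ as the projective subvariety cut out by $f$; this is the very definition of $Z_f\subset\cps(\mathbf{w})$. The horizontal arrows of the diagram are then Sasakian and K\"{a}hlerian embeddings by the previous steps, while the vertical arrows are $S^1$ V-bundle projections and Riemannian submersions by the general fact that a quasi-regular Sasaki manifold fibers over its transverse K\"{a}hler leaf space. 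The only real subtlety, and the main obstacle to a maximally clean argument, is the identification of the standard sphere (on which $S^{2n+1}_{\mathbf{w}}$ is defined as a Sasaki structure) with the natural link $\{r_{\mathbf{w}}=1\}$ of the weighted K\"{a}hler cone via the weighted radial flow; once that identification is in place, the remainder is routine verification.
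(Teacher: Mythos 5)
Your proposal is correct and follows essentially the same route as the cited source: the paper states this lemma without proof, quoting Boyer--Galicki, and their argument is exactly the inheritance argument you give, namely that quasi-homogeneity makes $X_f\setminus\{o\}$ a $\C^*(\mathbf{w})$-invariant complex subcone of the weighted K\"ahler cone $\C^{n+1}\setminus\{0\}$, so the weighted Sasaki data on $S^{2n+1}_{\mathbf{w}}$ restrict to a quasi-regular Sasaki structure on $S_f$ whose $S^1(\mathbf{w})$-quotient is $Z_f\subset\cps(\mathbf{w})$. The ``subtlety'' you flag at the end is harmless here, since in the paper's conventions the weighted structure is already placed on the standard sphere with its standard CR structure, so no further identification is needed.
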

Since we assume that $X_f\subset\C^{n+1}$ has only an isolated singularity at the origin, $Z_f$ is a K\"{a}hler orbifold.
In general we will say that a complex orbifold is \emph{well-formed} if its orbifold singular set has no components of
codimension 1.  Not all of the orbifolds in this article will be well-formed.  When a complex orbifold $Z$ is not well-formed
the orbifold structure is ramified along divisors.  If there is a degree $m_i$ ramification along $D_i\subset Z$, then we call
the $\Q$-divisor $\Delta =\sum_i (1-\frac{1}{m_i})D_i$ the \emph{branch divisor}.  If $K_Z$ denotes the usual canonical class,
then the orbifold canonical class is $K_Z^{\text orb}=K_Z +\Delta$.  Note that the orbifold structure is determined by both the
complex space $Z$ and $\Delta$, so one sometimes writes $(X,\Delta)$ to denote an orbifold.

\begin{prop}[\cite{BGK}]
 The orbifold $Z_f$ is Fano, i.e. the \emph{orbifold} canonical bundle $\mathbf{K}_{Z_f}$ is negative, if and only if
$|\mathbf{w}|=\sum_{j=0}^n w_j >d$.
\end{prop}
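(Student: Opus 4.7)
The plan is to compute the orbifold canonical class of $Z_f$ via the orbifold adjunction formula applied to the quasi-smooth hypersurface $Z_f \subset \cps(\mathbf{w})$, and then read off the sign condition.

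First I would establish the orbifold analogue of the standard formula $\mathbf{K}_{\cps^n}\cong\mathcal{O}(-n-1)$, namely
\[
\mathbf{K}_{\cps(\mathbf{w})}^{\text{orb}}\cong\mathcal{O}_{\cps(\mathbf{w})}(-|\mathbf{w}|).
\]
The quickest route is through the quotient presentation $\cps(\mathbf{w})=(\C^{n+1}\setminus\{0\})/\C^*(\mathbf{w})$: the meromorphic $(n+1)$-form $dz_0\wedge\cdots\wedge dz_n$ on $\C^{n+1}$ is homogeneous of degree $|\mathbf{w}|$ under $\C^*(\mathbf{w})$, so contracting with the Euler field of $\C^*(\mathbf{w})$ gives a nowhere zero section of $\mathbf{K}_{\cps(\mathbf{w})}^{\text{orb}}\otimes\mathcal{O}(|\mathbf{w}|)$. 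Equivalently, on an orbifold chart near a fixed point of a stabilizer of order $m_i$ along a toric divisor $D_i$, one checks that the branch contribution $(1-1/m_i)D_i$ in the orbifold canonical class combines with the divisorial part of $K_{\cps(\mathbf{w})}$ to give exactly $\mathcal{O}(-|\mathbf{w}|)$ (this is the calculation already encoded in the definition $\mathbf{K}^{\text{orb}}_Z = K_Z+\Delta$ quoted just before the statement).

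Next I would apply the orbifold adjunction formula to the quasi-smooth hypersurface $Z_f\subset\cps(\mathbf{w})$ of degree $d$. Because $X_f$ is smooth away from the origin, $Z_f$ acquires its orbifold structure as a suborbifold from the ambient $\cps(\mathbf{w})$; on each local uniformising chart the map $Z_f\hookrightarrow\cps(\mathbf{w})$ is a smooth hypersurface, so the ordinary adjunction formula holds chart by chart and globalises to
\[
\mathbf{K}_{Z_f}^{\text{orb}}\cong\bigl(\mathbf{K}_{\cps(\mathbf{w})}^{\text{orb}}\otimes\mathcal{O}(d)\bigr)\big|_{Z_f}\cong\mathcal{O}_{\cps(\mathbf{w})}(d-|\mathbf{w}|)\big|_{Z_f}.
\]
Combined with the previous step, this reduces the problem to deciding when $\mathcal{O}(d-|\mathbf{w}|)|_{Z_f}$ is a negative orbifold line bundle.

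Finally I would use the fact that $\mathcal{O}_{\cps(\mathbf{w})}(k)$ is a positive (orbifold) line bundle exactly when $k>0$: it carries the Fubini--Study type Hermitian metric inherited from $S^{2n+1}_{\mathbf{w}}$ via the V-bundle construction of Example~\ref{ex:Sasak-st}, whose curvature is a positive basic $(1,1)$-form (a positive multiple of the transverse K\"ahler form $\frac{1}{2}d\eta_{\mathbf{w}}$). Restricting to the quasi-smooth $Z_f$ preserves positivity of the curvature form, so $\mathcal{O}(k)|_{Z_f}$ is negative iff $k<0$. Applied to $k=d-|\mathbf{w}|$ this gives the claimed equivalence $|\mathbf{w}|>d$.

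The main obstacle, I expect, is the bookkeeping of the orbifold structure: one must be careful that $Z_f$ is genuinely a suborbifold of $\cps(\mathbf{w})$ (so adjunction does not pick up extra branch divisors beyond those inherited from $\cps(\mathbf{w})$), and that the branch divisor $\Delta$ is handled consistently on both sides. Quasi-smoothness of $Z_f$, which is exactly the hypothesis that $X_f$ is smooth off the origin, is what makes this work: along every component of the orbifold singular locus of $\cps(\mathbf{w})$ meeting $Z_f$, the stabiliser acts on the local uniformising cover of $Z_f$ by the same character as on $\cps(\mathbf{w})$, so the $(1-1/m_i)D_i$ contributions match and no additional correction appears.
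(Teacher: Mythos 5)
Your argument is correct, and it is the standard adjunction proof of this fact (it is essentially what the cited reference \cite{BGK} does): compute $\mathbf{K}^{\text{orb}}_{\cps(\mathbf{w})}\cong\mathcal{O}(-|\mathbf{w}|)$, apply orbifold adjunction to the quasi-smooth hypersurface to get $\mathbf{K}^{\text{orb}}_{Z_f}\cong\mathcal{O}(d-|\mathbf{w}|)|_{Z_f}$, and decide the sign. The paper itself does not reprove the proposition but, in the discussion immediately following it, runs the equivalent computation one level up on the affine cone: the residue forms $\Omega_k$ of (\ref{eq:hyper-adj}) glue to a trivialization $\Omega$ of $\mathbf{K}_{X_f\setminus\{o\}}$, and the $\C^*(\mathbf{w})$-action scales $\Omega$ with weight $|\mathbf{w}|-d$; since $\mathbf{K}^{\text{orb}}_{Z_f}$ is the orbibundle associated to the principal $\C^*$-orbibundle $X_f\setminus\{o\}\to Z_f$ by this character, negativity of $\mathbf{K}^{\text{orb}}_{Z_f}$ is exactly positivity of the weight, i.e. $|\mathbf{w}|>d$. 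The two routes are the same adjunction count packaged differently: the cone picture avoids having to discuss $\mathbf{K}^{\text{orb}}_{\cps(\mathbf{w})}$, suborbifold adjunction and the positivity of $\mathcal{O}_{\cps(\mathbf{w})}(k)$ separately (and it feeds directly into the verification of Proposition~\ref{prop:CY-cond}~(iii)), whereas your downstairs version needs precisely the bookkeeping you flag — that quasi-smoothness makes $Z_f$ a genuine suborbifold of $\cps(\mathbf{w})$ with normal orbibundle $\mathcal{O}(d)|_{Z_f}$, so no extra branch contributions appear — which you handle correctly, including the non--well-formed case where the statement is only true for the orbifold, not the variety-level, canonical class.
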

Note that the orbifold canonical bundle is different than the usual canonical bundle if $Z_f$ is not well-formed.

It follows that the cone $C(S_f)$ satisfies the condition of Proposition~\ref{prop:CY-cond}.  In fact, by the adjunction formula
the $n$-forms
\begin{equation}\label{eq:hyper-adj}
 \Omega_k :=\frac{(-1)^k}{\partial f/\partial z_k}dz_0 \wedge\cdots\wedge\widehat{dz_k}\wedge\cdots\wedge dz_n |_X ,
\end{equation}
glue together to a global generator of the canonical bundle $\mathbf{K}_{X\setminus\{o\}}$.  The
$\C^*(\mathbf{w})$-action acts on $\Omega$ with weight $\sum_{j=0}^n w_j -d$.  Thus, after possible performing a
$D$-homothetic transformation, we see that $\Omega$ satisfies Proposition~\ref{prop:CY-cond} (iii).

We have the following small generalization of \cite{Re1}, Proposition 4.3.
\begin{prop}
 The hypersurface $X_f\subset\C^{n+1}$ has a rational, and hence canonical singularity at $0$ if and only if
 $|\mathbf{w}|>d$.
\end{prop}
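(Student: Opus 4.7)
The plan is to invoke Proposition~\ref{prop:ration}. First note that $X_f$ is Gorenstein: the forms $\Omega_k$ of (\ref{eq:hyper-adj}) glue on $X_f\setminus\{o\}$ to a nowhere-vanishing holomorphic $n$-form $\Omega$, and Riemann extension on the normal variety $X_f$ upgrades this to a trivialisation of $\omega_{X_f}$. For a Gorenstein singularity, rationality implies canonicity (a rational singularity is Cohen--Macaulay, whence $\pi_{*}\omega_Y=\omega_X$ automatically for any resolution $\pi:Y\to X_f$, and for $K_{X_f}$ Cartier this is the condition $a(E_i,X_f)\ge 0$ in (\ref{eq:discrep})); this accounts for the ``hence canonical'' in the statement. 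Thus it suffices to show that $\int_U \Omega\wedge\bar\Omega$ is finite for some neighbourhood $U$ of $o$ if and only if $|\mathbf{w}|>d$.

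Everything turns on the scaling behaviour of $\Omega$. Set $\alpha=|\mathbf{w}|-d$. Each $dz_j$ has weight $w_j$ and $\partial f/\partial z_k$ has weight $d-w_k$, so $\Omega$ is weighted homogeneous of degree $\alpha$; equivalently, the real one-parameter subgroup $\phi_t(z)=(e^{w_0 t}z_0,\dots,e^{w_n t}z_n)$ of $\C^*(\mathbf{w})$ satisfies $\phi_t^{*}\Omega=e^{\alpha t}\Omega$, so
\begin{equation*}
\phi_t^{*}(\Omega\wedge\bar\Omega)=e^{2\alpha t}\,\Omega\wedge\bar\Omega .
\end{equation*}
Its generator $E=\sum_j w_j(x_j\partial_{x_j}+y_j\partial_{y_j})$ is transverse to $S^{2n+1}$, and hence to $S_f$, because $E\cdot|z|^2=2\sum_j w_j|z_j|^2>0$ off the origin; moreover every $\phi_t$-orbit in $X_f\setminus\{o\}$ meets $S_f$ in a unique point. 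Therefore the map
\begin{equation*}
\Psi:\R\times S_f\longrightarrow X_f\setminus\{o\},\qquad\Psi(t,x)=\phi_t(x),
\end{equation*}
is a diffeomorphism, and since all $w_j>0$, a direct estimate on $|\phi_t(x)|^2=\sum_j e^{2w_j t}|x_j|^2$ shows $\Psi((-\infty,0]\times S_f)=X_f\cap\bar{B}_1\setminus\{o\}$, a punctured neighbourhood of $o$.

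Because $\Omega\wedge\bar\Omega$ is a $2n$-form while $\dim_{\R}S_f=2n-1$, its pullback to $\R\times S_f$ has no component purely in the $S_f$-direction; combined with the displayed equivariance this produces the product decomposition
\begin{equation*}
\Psi^{*}(\Omega\wedge\bar\Omega)=e^{2\alpha t}\,dt\wedge\omega ,
\end{equation*}
where $\omega=\iota^{*}\bigl(\iota_E(\Omega\wedge\bar\Omega)\bigr)$, with $\iota:S_f\hookrightarrow X_f\setminus\{o\}$ the inclusion, is a nowhere-vanishing $(2n-1)$-form on $S_f$ (a constant phase times a positive volume form, since $E$ is outward-pointing). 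Hence
\begin{equation*}
\int_{X_f\cap B_1\setminus\{o\}} \Omega\wedge\bar\Omega \;=\; \Bigl(\int_{-\infty}^{0}e^{2\alpha t}\,dt\Bigr)\int_{S_f}\omega,
\end{equation*}
which is finite precisely when $\alpha>0$, i.e.\ $|\mathbf{w}|>d$, and diverges otherwise. By Proposition~\ref{prop:ration} this is exactly the rationality criterion.

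The only real obstacle is the bookkeeping in the two middle paragraphs: verifying that $\Psi$ is a global diffeomorphism identifying $(-\infty,0]\times S_f$ with a neighbourhood of $o$, and that the contracted form $\omega$ is a genuine (nowhere-vanishing) volume form on $S_f$, so that the product integral does not accidentally collapse. Once those are in hand, the scaling law for $\Omega$ delivers the answer at once, and the passage from rationality to canonicity is immediate from the Gorenstein property established in the first paragraph.
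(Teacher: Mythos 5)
Your proposal is correct and follows the paper's own route: the paper proves this proposition by applying the integrability criterion of Proposition~\ref{prop:ration} to the adjunction form $\Omega$ of (\ref{eq:hyper-adj}), stating only that ``one can check'' the criterion holds iff $|\mathbf{w}|>d$, which is exactly the weighted-scaling computation you carry out (and your Gorenstein-plus-rational argument for ``hence canonical'' matches the paper's remark after Proposition~\ref{prop:ration} and the argument in Proposition~\ref{prop:resol}). You simply supply the details the paper leaves to the reader.
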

One can check that the form $\Omega$ defined by (\ref{eq:hyper-adj}) satisfies Proposition~\ref{prop:ration} if
and only if $|\mathbf{w}|>d$.

\subsection{Sasaki-Einstein metrics}

We review a sufficient algebro-geometric condition that has been used prolifically to get examples of positive
orbifold K\"{a}hler-Einstein metrics~\cite{DeKol,JoKol}.  This is used for example in the case quasi-homogeneous hypersurfaces
(cf.~\cite{BGK,BG2}) to show that in some cases the Sasaki-structure in Lemma~\ref{lem:hyp-Sasak} has a transversal deformation to a
Sasaki-Einstein structure, but it is not limited to that case (cf.~\cite{Kol3,Kol4}).
This makes use of the following definition.
\begin{defn}\label{defn:klt}
 Let $X$ be a normal complex space and $D\subset X$ a $\Q$-divisor.  Assume that $D$ and $K_X$ are both $\Q$-Cartier.
Let $\pi:Y\rightarrow X$ be a proper birational morphism with $Y$ smooth.  Then there is a unique $\Q$-divisor
$D_Y=\sum_{i}a_i E_i$ such that
\[ K_Y \equiv \pi^*(K_X +D) +D_Y \quad\text{and}\quad\pi_* D_Y=D \]
We say that $(X,D)$ is \emph{Kawamata log terminal(klt)} if the $a_i >-1$ for every $\pi$.
\end{defn}

\begin{thm}[\cite{DeKol,Nad}] \label{thm:K-Econd}
Let $(X,\Delta)$  be an n-dimensional compact complex orbifold with $-K_Z^{\text orb}=-K_Z  -\Delta$ ample.
Suppose there is an $\epsilon>0$ so that
\[ (X, \hbox{$\frac{n+\epsilon}{n+1}$} D +\Delta)\quad\text{ is klt}\]
for every effective $\Q$-divisor $D\equiv -K_Z^{\text orb}$.  Then $(X,\Delta)$ has an orbifold K\"{a}hler-Einstein metric.
\end{thm}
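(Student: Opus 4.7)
The plan is to apply the continuity method for the Aubin--Yau complex Monge--Amp\`ere equation on the orbifold $(X,\Delta)$, with the hypothesis providing the crucial $C^{0}$ estimate via Nadel's multiplier ideal sheaf argument. Fix a reference orbifold K\"{a}hler form $\omega_{0}\in c_{1}^{\text{orb}}(-K_{Z}^{\text{orb}})$ and write $\ric(\omega_{0})-\omega_{0}=i\partial\ol{\partial}f_{0}$. Consider the family
\begin{equation*}
(\omega_{0}+i\partial\ol{\partial}\phi_{t})^{n}=e^{f_{0}-t\phi_{t}}\omega_{0}^{n},\qquad t\in[0,1],
\end{equation*}
of Monge--Amp\`ere equations, where a solution at $t=1$ yields the desired orbifold K\"{a}hler--Einstein metric $\omega_{\text{KE}}=\omega_{0}+i\partial\ol{\partial}\phi_{1}$.

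Let $T\subset[0,1]$ denote the set of $t$ for which a smooth basic solution exists. One has $0\in T$ by Yau's theorem, and $T$ is open in $[0,1)$ by the implicit function theorem, since the linearization $\Delta_{\omega_{t}}+t$ is invertible on basic functions for $t<1$ (the first nonzero eigenvalue of $-\Delta_{\omega_t}$ is positive). By standard Yau-type $C^{2}$ and higher-order estimates, extended to the orbifold setting by working in uniformizing charts, closedness reduces to a uniform $C^{0}$ bound $\|\phi_{t}\|_{C^{0}}\leq C$ on any closed subinterval of $T$. Thus the entire problem concentrates on establishing such a uniform $C^{0}$ estimate up to $t=1$.

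For the $C^{0}$ estimate I would argue by contradiction using Nadel's method. Suppose $\sup_{X}(-\phi_{t_{k}})\to\infty$ along some sequence $t_{k}\in T$. After the standard normalization, the hermitian metrics $h_{k}=e^{-\phi_{t_{k}}}h_{0}$ on $-K_{Z}^{\text{orb}}$ are positively curved with increasingly singular behavior. By Demailly--Kollar's lower-semicontinuity of the log-canonical threshold together with H\"{o}rmander--Bombieri--Skoda $L^{2}$-compactness, after passing to a subsequence one obtains a closed positive $(1,1)$-current $\Theta\in c_{1}^{\text{orb}}(-K_{Z}^{\text{orb}})$ with nontrivial multiplier ideal sheaf $\mathcal{I}(c\,\Theta)$ for some $c>n/(n+1)$. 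A Siu-type approximation of $\Theta$ then produces an effective $\Q$-divisor $D\equiv -K_{Z}^{\text{orb}}$ for which the pair $(X,\frac{n+\epsilon}{n+1}D+\Delta)$ is not klt in the sense of Definition~\ref{defn:klt}, contradicting the hypothesis. Hence the $C^{0}$ bound holds uniformly, $T=[0,1]$, and $\phi_{1}$ gives the K\"{a}hler--Einstein metric.

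The main obstacle is precisely this Nadel-type $C^{0}$ bound. The delicate points are (i)~carrying out the multiplier ideal argument on the orbifold $(X,\Delta)$, where the branch divisor must be incorporated into the pair so that the klt hypothesis exactly compensates for the ramification contributions in local uniformizing charts, and (ii)~converting the weak limit of singular hermitian metrics into an honest effective $\Q$-divisor $D$ sharply in the class $-K_{Z}^{\text{orb}}$, using Siu's decomposition and the openness of klt, so that the contradiction can be read off from the stated hypothesis with the fixed $\epsilon>0$. Once this step is in place, everything else is routine continuity-method bookkeeping.
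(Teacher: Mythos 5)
You should first note that the paper does not actually prove this theorem: it is quoted from Demailly--Koll\'ar and Nadel, and the text only records the strategy, namely run the continuity method for the Monge--Amp\`ere equation and associate a multiplier ideal sheaf to its failure, the klt hypothesis forcing that sheaf to be all of $\mathcal{O}_X$. Your outline follows exactly this strategy, so at the level of approach you agree with the paper and its sources; the continuity-method bookkeeping is standard, apart from small slips (``basic'' functions is a Sasakian term with no role in the orbifold K\"ahler--Einstein problem, and openness at $t$ requires $\lambda_1(-\Delta_{\omega_t})>t$, which follows from $\ric(\omega_t)=t\omega_t+(1-t)\omega_0>t\omega_t$ via Lichnerowicz--Obata, not merely positivity of the first nonzero eigenvalue).

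The genuine gap is the step that manufactures the divisor $D$. You propose to pass from the limiting positive current $\Theta$ with nontrivial multiplier ideal to an effective $\Q$-divisor $D\equiv -K_Z^{\text{orb}}$ by ``Siu-type approximation\dots using Siu's decomposition.'' Siu decomposition only extracts the divisorial part $\sum_j\lambda_j[D_j]$ of $\Theta$; the non-klt singularity of the limit weight can live entirely in the residual part (for instance an isolated, non-divisorial singularity of the potential with a large Lelong number at one point), in which case the decomposition produces no divisor and hence no contradiction with a hypothesis that quantifies only over effective $\Q$-divisors. The bridge that is actually needed --- and this is the technical heart of the cited Demailly--Koll\'ar paper --- is the approximation of the limiting plurisubharmonic weight by algebraic weights $\frac{1}{2m}\log\sum_i|s_i|^2$ with $s_i\in H^0\bigl(X,-mK_Z^{\text{orb}}\bigr)$ (Ohsawa--Takegoshi/Demailly approximation of multiplier ideals), combined with the semicontinuity of complex singularity exponents; this converts the analytic obstruction into an effective $\Q$-divisor $D=\frac{1}{m}\Div(s)$ in the correct class for which $(X,\frac{n+\epsilon}{n+1}D+\Delta)$ fails to be klt, contradicting the hypothesis with the fixed $\epsilon>0$. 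As written, your contradiction step does not close; with that substitution (and with the branch divisor $\Delta$ carried through the uniformizing charts, as you yourself flag) your sketch becomes the argument of the references the paper cites.
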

The theorem follows by associating a \emph{multiplier ideal sheaf} to an attempt to solve the Monge-Amp\`{e}re equation for a
K\"{a}hler-Einstein metric, via the continuity method.  The multiplier ideal sheaf is a proper subsheaf of $\mathcal{O}_X$
unless the continuity method produces a K\"{a}hler-Einstein metric.  The condition in Theorem~\ref{thm:K-Econd} guarantees that
this sheaf is $\mathcal{O}_X$.

Checking the condition in Theorem~\ref{thm:K-Econd} is generally quite difficult.  But in certain cases it can be
simplified.  In particular, for perturbations of Brieskorn-Pham singularities simple sufficient numerical criteria are given in
~\cite{BGK} for Theorem~\ref{thm:K-Econd} to be satisfied.

\section{Hypersurface examples}

\subsection{Weighted blow-ups}

We will make use of the notion of a weighted blow-up.  Let $\mathbf{w}=(w_0,\ldots, w_n )$ be a weight vector
and $S(\mathbf{w})=\C[z_0,\ldots,z_n]$ graded polynomial ring as above.  If
$S(\mathbf{w})=\sum_{j\geq 0}S(j)$, where $S(j)$ are the homogeneous elements of degree $j$, and
$f\in S(\mathbf{w})$ is written in homogeneous components $f=\sum_{j\geq 0} f(j)$, then we define the
degree of $f$ to be $w(f)=\min_{j\geq 0} \{f(j)\neq 0\}$.  We have ideals
$M^\mathbf{w}(j)=\{f\in S(\mathbf{w}): w(f)\geq j\}$.
\begin{defn}
Then the \emph{weighted blow-up} $B_0^{\mathbf{w}}\C^{n+1}$
of $\C^{n+1}$ with weight $\mathbf{w}$ is $\operatorname{Proj}(\sum_{j\geq 0} M^\mathbf{w}(j))$.
\end{defn}
For any variety $X\subset\C^{n+1}$ the weighted blow-up $B_0^\mathbf{w} X$ is the birational transform of
$X$ in $B_0^{\mathbf{w}}\C^{n+1}$.
Geometrically $B_0^{\mathbf{w}}\C^{n+1}$ is the total space of the tautological line V-bundle over $\cps(\mathbf{w})$
associated to the $\C^*$-action on $\C^{n+1}\setminus\{0\}$, which has associated $\rank-1$ sheaf
$\mathcal{O}(-1)$.

Alternatively, we may consider $\C^{n+1}$ as the toric variety associated to the fan $\Delta$ consisting of the
cone $\sigma =\R_{\geq 0} e_0 +\R_{\geq 0}e_1 +\cdots +\R_{\geq 0}e_n$ and all of its faces.  Then we have an
integral element $\mathbf{w}\in\Int(\sigma)$.  The weighted blow-up $B_0^{\mathbf{w}}\C^{n+1}$ is then the
toric variety associated to the fan $\Delta^\mathbf{w}$ consisting of the ray $\tau_{\mathbf{w}}$ spanned by
$\mathbf{w}$ and all cones $\tau_{\mathbf{w}} +\beta$ where $\beta<\sigma$ is a proper face.

We give a K\"{a}hler structure on the weighted blow-up $\pi: B_0^{\mathbf{w}}\C^{n+1}\rightarrow\C^{n+1}$.  The
Sasaki structure $S^{2n+1}_{\mathbf{w}}$ as in Example~\ref{ex:Sasak-sph} has the K\"{a}hler cone $C(S^{2n+1}_{\mathbf w})$ which
is biholomorphic to $\C^{n+1}$ and has metric $dr^2 +r^2 g$ with K\"{a}hler form $\omega =dd^c\frac{r^2}{2}$.  Note that
$r$ is not the usual radius function on $\C^{n+1}$, and $r^2$ is not necessarily smooth at $o\in\C^{n+1}$.  But $\pi^* r^2$ is
smooth on $B_0^{\mathbf{w}}\C^{n+1}$.  Let $\phi:\R_{\geq 0}\rightarrow[0,1]$ be a smooth function with $\phi(r)=1$ for
$r\leq a$ and $\phi(r)=0$ for $r\geq b$ for $0<a<b$.  Then $\omega_0 =dd^c(\phi\log r^2) +Cdd^c r^2$ defines, an orbifold, K\"{a}hler form
on $B_0^{\mathbf{w}}\C^{n+1}$ for sufficiently large $C>0$.  And we have $[\omega_0]\in H_c^2(Y,\R)$.

Let $F$ be the exceptional divisor of $\pi:B_0^{\mathbf{w}}\C^{n+1}\rightarrow\C^{n+1}$.
For a hypersurface $X =\{f=0\}\subset\C^{n+1}$ the weighted blow-up is the birational transform
$X'=B_0^{\mathbf{w}}X\subset B_0^{\mathbf{w}}\C^{n+1}$ with exceptional divisor $E=F\cap X'$.
We have the following adjunction formula~\cite{Re2}
\begin{equation}\label{eq:blow-up-adj}
K_{X'} = \pi^*K_X +(w(z_0 \cdots z_n)-w(f)-1)E|_{X'}.
\end{equation}

\subsection{Examples with $b_3 \neq 0$}

We consider the simplest cases in which the terminalization procedure of M. Reid (cf.~\cite{Re1,Re2})
produces a smooth crepant resolution $\pi:Y\rightarrow X$.
This says that one can construct a projective crepant resolution $Y$ of a 3-fold $X$ with only
canonical singularities such that $Y$ has only terminal singularities.
If $X$ is Gorenstein, then one successively resolves the isolated non-terminal singularities with blow-ups
of weights $(1,1,1,1)$, $(2,1,1,1)$, or $(3,2,1,1)$.

\begin{figure}[tbh]\label{fig:b3neq0}
\centering
\begin{tabular}{|c|c|c|c|c|c|c|}
\hline
$X$ & $S$ S-E & \multicolumn{2}{|c|}{crepant $Y$} & $c(X)$ & $b_3(Y)$ &  $H_2(S)$ \\\hline\hline
\multirow{3}{*}{ $x_0^3 +x_1^3 +x_2^3 +x_3^k =0$ } & \multirow{3}{12mm}{$k=3$, $k>6$} & $0$ & yes & $\lfloor\frac{k}{3}\rfloor$ & $2(\lfloor\frac{k}{3}\rfloor-1)$ &  $\Z^6 \oplus\Z_{\frac{k}{3}}^2$ \\\cline{3-7}
 & & $1$ & yes & $\lfloor\frac{k}{3}\rfloor$ & $2\lfloor\frac{k}{3}\rfloor$ & $\Z_k ^2$ \\\cline{3-7}
 & & $2$ & no & & & $\Z_k^2$\\\hline
\multirow{3}{*}{ $x_0^2 +x_1^4 +x_2^4 +x_3^k =0$ } & \multirow{3}{12mm}{$k=4$, $k>10$} & $0$ & yes & $\lfloor\frac{k}{4}\rfloor$ & $2(\lfloor\frac{k}{4}\rfloor -1)$ & $\Z^7 \oplus\Z_{\frac{k}{4}}^2$\\\cline{3-7}
 & & $1$ & yes & $\lfloor\frac{k}{4}\rfloor$ & $2\lfloor\frac{k}{4}\rfloor$ & $\Z_k^2$\\\cline{3-7}
 & & $2$ & unknown &   & & $\Z^3 \oplus\Z_{\frac{k}{2}}^2$ \\\cline{3-7}
 & & $3$ & no &   & & $\Z_k^2$\\\hline
\multirow{3}{*}{ $x_0^2 +x_1^3 +x_2^6 +x_3^k =0$ } & \multirow{3}{12mm}{$k=6$, $k>12$} & $0$ & yes & $\lfloor\frac{k}{6}\rfloor$ & $2(\lfloor\frac{k}{6}\rfloor-1)$ & $\Z^8 \oplus\Z_{\frac{k}{6}}^2$\\\cline{3-7}
 & & $1$ & yes & $\lfloor\frac{k}{6}\rfloor$ & $2\lfloor\frac{k}{6}\rfloor$ & $\Z_k^2$\\\cline{3-7}
 & & $2$ & unknown &   & & $\Z^2 \oplus\Z_{\frac{k}{2}}^2$\\\cline{3-7}
 & & $3$ & unknown &   & & $\Z^4 \oplus\Z_{\frac{k}{3}}^2$\\\cline{3-7}
 & & $4$ & unknown &   & & $\Z^2 \oplus\Z_{\frac{k}{2}}^2$\\\cline{3-7}
 & & $5$ & no      &   & & $\Z_k^2$      \\\hline
$x_0^3 +x_1^4 +x_2^4 +x_3^4 =0$ & yes & \multicolumn{2}{|c|}{yes} & 3 & 12 & $\Z_3^6$ \\\hline
\end{tabular}
\caption{Examples with $b_3(Y)\neq 0$}
\end{figure}

The first example in Figure~\ref{fig:b3neq0} for $k\geq 3$ admits a crepant blow-up with weight
$(1,1,1,1)$.  The result is smooth besides one singularity of the form $x_0^3 +x_1^3 +x_2^3 +x_3^{k-3} =0$.
Proceeding inductively, one gets a smooth resolution $Y$ if $k=0$ or $1 \mod 3$.  The second example
for $k\geq 4$ admits a crepant blow-up with weight $(2,1,1,1)$.  The blow-up has one singularity of the form
$x_0^2 +x_1^4 +x_2^4 +x_3^{k-4} =0$.  And for $k=0$ or $1 \mod 4$ after repeating this blow-up we get a
smooth resolution.  The third example is completely analogous but we repeatedly blow-up with weight
$(3,2,1,1)$, and for $k=0$ or $1 \mod 6$ we get a smooth resolution.  The resolutions of the these three series
of hypersurfaces is due to H.-W. Lin~\cite{Lin}.  For the fourth example, the usual
blow-up is crepant.  The result has a smooth genus 3 curve of $A_2$ singularities.  Blowing up along
this curve results in a smooth resolution.

Since each blow-up admits a compact K\"{a}hler class, Theorem~\ref{thm:main}
proves that each resolved space $Y$ admits a $c(X)$ dimensional space of asymptotically conical Ricci-flat K\"{a}hler
metrics for the range given in the second column for which $S$ is known to admit a Sasaki-Einstein metric.

The second column gives the range of $k$ for which the Sasaki link
$S$ is known to admit a Sasaki-Einstein metric.  This can be proved from the simple numerical condition in Theorem 34 of~\cite{BGK} for examples
which are perturbations of Brieskorn-Pham singularities.  The third column gives $k$ mod $3$, $4$ and $6$, the fourth says whether the $X$ admits a crepant resolution $\pi:Y\rightarrow X$, the fifth gives the number $c(X)$ of prime divisors in
$\pi^{-1}(o)= E$, and the sixth gives the third Betti number of the resolution $Y$.

The homology of $S$ can be computed using well known
results on hypersurface singularities (cf.~\cite{MilOr}).  But a simpler method is to use a result on the homology of a 5-dimensional Seifert
bundle over a complex orbifold of J. Koll\'{a}r~\cite{Kol3}.  Let $(Z,\Delta)$ be an orbifold with branch divisor $\Delta =\sum_i (1-\frac{1}{m_i})D_i$,
whose singularities are all locally quotients of cyclic groups.  Suppose $\pi: S\rightarrow Z$ is a Seifert $S^1$-bundle with smooth total space.
So $c_1(S/Z)\in H^2(Z,\Q)$.  Let $\Ord(Z,\Delta)$ be the l.c.m. of the orders of the local groups of the orbifold.  Then
$\Ord(Z,\Delta)c_1(S/Z)\in H^2(Z,\Z)$, and let $d\in\N$ be the greatest number dividing this integral class.
\begin{thm}[\cite{Kol3}]\label{thm:cohom-seif}
Suppose $\pi: S\rightarrow Z$ is a smooth Seifert $S^1$-bundle over a projective orbifold.  Assume $H_1(S,\Q)=0$ and $H_1^{\text orb}(Z,\Z)=0$.
If $s=\rank H^2(Z,\Q)$, then $H^2(S,\Z)$ is as follows.

\centering
\begin{tabular}{|c|c|c|c|c|c|}
\hline
$H^0$ & $H^1$ & $H^2$ & $H^3$ & $H^4$ & $H^5$ \\\hline
$\Z$ & $0$ & $\Z^{s-1} \oplus\Z_d$ & $\Z^{s-1}\oplus\sum_i \Z_{m_i}^{2g(D_i)}$ & $\Z_d$ & $\Z$ \\\hline
\end{tabular}
\end{thm}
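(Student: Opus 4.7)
The plan is to use the orbifold Gysin exact sequence for the Seifert bundle $\pi\colon S\rightarrow Z$,
\[
\cdots\rightarrow H^{k-2}_{\text{orb}}(Z,\Z)\xrightarrow{\cup e}H^{k}_{\text{orb}}(Z,\Z)\rightarrow H^{k}(S,\Z)\rightarrow H^{k-1}_{\text{orb}}(Z,\Z)\xrightarrow{\cup e}H^{k+1}_{\text{orb}}(Z,\Z)\rightarrow\cdots,
\]
where $e\in H^{2}_{\text{orb}}(Z,\Z)$ is the orbifold Chern class of the Seifert bundle. Smoothness of the total space guarantees that the left-hand groups are the ordinary integral cohomology of $S$, not orbifold cohomology. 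This reduces the problem to computing $H^*_{\text{orb}}(Z,\Z)$ and understanding the cup product with $e$.

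First I would compute $H^*_{\text{orb}}(Z,\Z)$. Rationally $H^*_{\text{orb}}(Z,\Q)=H^*(Z,\Q)$, so the Betti numbers are $(1,0,s,0,1)$ by Poincar\'{e} duality on the $4$-real-dimensional orbifold together with the hypothesis $H_1^{\text{orb}}(Z,\Z)=0$ (which gives $H^1_{\text{orb}}(Z,\Z)=0$ by universal coefficients). Integrally, torsion contributions come from the branch divisors $D_i$ with cyclic stabilizer $\Z_{m_i}$; these can be isolated via a Leray spectral sequence comparing $Z$ to its coarse moduli space, whose higher direct images localize to $H^*(D_i,B\Z_{m_i})$. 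The contribution in the critical odd degree is $\bigoplus_i H^1(D_i,\Z_{m_i})\cong\bigoplus_i\Z_{m_i}^{2g(D_i)}$, appearing in $H^3_{\text{orb}}(Z,\Z)$.

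Next I would analyse the Euler class $e$. By the definition of $d$, the map $\cup e\colon H^0_{\text{orb}}=\Z\rightarrow H^2_{\text{orb}}(Z,\Z)$ is injective with cokernel $\Z^{s-1}\oplus\Z_d$ in the free part (together with any divisor-torsion), and dually $\cup e\colon H^{2}_{\text{orb}}(Z,\Z)\rightarrow H^{4}_{\text{orb}}(Z,\Z)=\Z$ has cokernel $\Z_d$. Reading the Gysin sequence degree by degree then gives the table: $H^1(S)=0$ from $H^1_{\text{orb}}=0$ and the injectivity of $\cup e$ on $H^0$; $H^2(S)=\operatorname{coker}(\Z\xrightarrow{e}H^2_{\text{orb}})=\Z^{s-1}\oplus\Z_d$; $H^3(S)$ fits in an extension of the kernel of $\cup e$ on $H^2_{\text{orb}}$ by the torsion $\bigoplus_i\Z_{m_i}^{2g(D_i)}$ in $H^3_{\text{orb}}$, yielding $\Z^{s-1}\oplus\bigoplus_i\Z_{m_i}^{2g(D_i)}$; and $H^4(S)$, $H^5(S)$ follow symmetrically.

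The main obstacle is the integral computation of $H^*_{\text{orb}}(Z,\Z)$, in particular showing that the only torsion surviving in the even degrees of $H^*(S,\Z)$ is the single $\Z_d$ and not additional $\Z_{m_i}$ contributions from the branch divisors. This requires verifying that the orbifold torsion concentrated on the $D_i$ appears only in odd degrees of $H^*_{\text{orb}}(Z,\Z)$, and then checking that $\cup e$ interacts with this torsion precisely so that the torsion in $H^3_{\text{orb}}$ survives to $H^3(S)$ while no analogous torsion is introduced into $H^2(S)$ or $H^4(S)$ beyond the $\Z_d$ dictated by the divisibility of $e$.
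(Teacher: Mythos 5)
Note first that the paper does not prove this statement at all: it is quoted from Koll\'ar [Kol3], so there is no internal argument to compare with. Koll\'ar's own proof works on the coarse space with a careful local analysis of the Seifert bundle along the branch divisors and at the cyclic quotient points; your plan --- the Gysin sequence of the $S^1$-bundle over the orbifold, with smoothness of $S$ identifying the total space's cohomology with ordinary cohomology --- is morally the same computation and is a legitimate route, but as written it contains genuine gaps rather than just unfinished bookkeeping.

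The central gap is your structural claim about $H^*_{\text{orb}}(Z,\Z)$. It is not true that the torsion supported on the branch divisors appears only in odd degrees: locally along $D_i$ the orbifold looks like $D_i\times[\C/\Z_{m_i}]$, so the comparison with the coarse space contributes $H^0(D_i,\Z_{m_i})$, $H^1(D_i,\Z_{m_i})$, $H^2(D_i,\Z_{m_i})$ in degrees $2,3,4$; moreover the orbifolds to which this paper applies the theorem have isolated cyclic quotient points, whose local contribution $H^*(B\Z_m)$ is torsion in even degrees, so in general $H^4_{\text{orb}}(Z,\Z)\neq\Z$. The reason no torsion beyond $\Z_d$ reaches $H^2(S)$ and $H^4(S)$ is therefore not a vanishing statement about $H^*_{\text{orb}}$; it is that smoothness of $S$ forces the local uniformizing groups to act freely on the circle fibers, i.e.\ the restriction of the orbifold Euler class $e$ to each local $\Z_{m_i}$ (along $D_i$ and at each singular point) is a generator, so that cup product with $e$ surjects onto these local torsion pieces and kills them in the Gysin sequence. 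That local-to-global step is the heart of Koll\'ar's argument and is exactly what your sketch defers or mis-states. Two further points need care: $d$ is defined by divisibility of $\Ord(Z,\Delta)\,c_1(S/Z)$ in $H^2(Z,\Z)$ of the coarse space, so identifying the cyclic cokernels in your sequence with $\Z_d$ requires translating divisibility of $e$ in $H^2_{\text{orb}}$ into divisibility on $Z$; and the asserted direct-sum forms of $H^2(S)$ and $H^3(S)$ do not follow from reading off kernels and cokernels alone --- the extension problems in the Gysin sequence must be resolved, e.g.\ by invoking Poincar\'e duality and universal coefficients on the closed oriented $5$-manifold $S$.
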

As in Lemma~\ref{lem:hyp-Sasak} the Sasaki link is a Seifert bundle $\pi: S\rightarrow Z$ over a weighted homogeneous hypersurface.
We have $\pi_1(S)=e$ as the link of an $n$-dimensional hypersurface singularity is $(n-2)$-connected~\cite{Mil}.
We consider the Milnor algebra to compute $\rank H^2(Z,\Q)$.
\begin{equation}
 M(f) =\frac{\C[x_0,\ldots,x_n]}{(\partial f/\partial x_0,\ldots,\partial f/\partial x_n)}
\end{equation}
Then $M(f)$ is a graded algebra, and we denote by $M(f)_n$ the degree $n$ homogeneous component.
\begin{thm}[\cite{Ste}]\label{thm:cohom-hyp}
 The Hodge numbers of the primitive cohomology $H^n_0(Z)$ of an $n$-dimensional, degree $d=w(f)$, quasi-smooth homogeneous hypersurface
$Z_f \subset\cps^{n+1}(\mathbf{w})$ are given by
\[ h_0^{i,n-i} =\dim_{\C}M(f)_{(i+1)d-|\mathbf{w}|}.\]
\end{thm}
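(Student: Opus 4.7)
The plan is to follow the Griffiths--Steenbrink residue method, identifying the primitive cohomology $H^n_0(Z)$ with graded pieces of the Jacobian ring via the pole-order filtration on rational $(n+1)$-forms on $\cps(\mathbf{w})$ with poles along $Z_f$.

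First I would set up the residue map. The weighted Euler form
\[ \Omega_0 = \sum_{i=0}^{n+1} (-1)^i w_i z_i \, dz_0 \wedge \cdots \wedge \widehat{dz_i} \wedge \cdots \wedge dz_{n+1} \]
has weighted degree $|\mathbf{w}|$ and descends to a twisted $(n+1)$-form on $\cps(\mathbf{w})$. For any quasi-homogeneous polynomial $P$ of degree $(k+1)d - |\mathbf{w}|$ the expression $P\Omega_0/f^{k+1}$ descends to a rational $(n+1)$-form on $\cps(\mathbf{w})$ with pole of order $k+1$ along $Z_f$, and taking Poincar\'{e} residues gives a map into $H^n(Z_f,\C)$. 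Combined with the Gysin/Lefschetz duality exact sequence for the pair $(\cps(\mathbf{w}), \cps(\mathbf{w})\setminus Z_f)$, this identifies $H^{n+1}(\cps(\mathbf{w})\setminus Z_f,\C)$ with the primitive part $H^n_0(Z_f,\C)$.

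Second, I would filter the forms $P\Omega_0/f^{k+1}$ by pole order. A standard argument (clearing denominators against $df$) shows that the pole-order filtration is compatible with the Hodge filtration $F^\bullet$ on $H^n_0(Z_f)$, with pole of order at most $k+1$ corresponding to $F^{n-k}H^n_0(Z_f)$. The crucial step is the Koszul computation of the graded quotient: quasi-smoothness of $Z_f$ means exactly that the partial derivatives $\partial f/\partial z_0, \ldots, \partial f/\partial z_{n+1}$ form a regular sequence on $\C^{n+2}$, cutting out only the origin. This is precisely the hypothesis needed for the Koszul complex on the Jacobian ideal to be exact and to compute $M(f)$ as a graded vector space. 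Reducing a form $P\Omega_0/f^{k+1}$ modulo forms with smaller pole order by integration by parts produces the relation $P \equiv \sum_j Q_j \,\partial f/\partial z_j$ in degree $(k+1)d - |\mathbf{w}|$, so the graded quotient of pole order $k+1$ modulo pole order $k$ is $M(f)_{(k+1)d - |\mathbf{w}|}$. Setting $i = n-k$ gives the Hodge type $(i, n-i)$ and the stated formula.

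Third, I would verify degeneracy and convergence: one needs the Bott-type vanishing $H^q(\cps(\mathbf{w}), \Omega^p(m)) = 0$ for the relevant $(p,q,m)$ on a quasi-smooth weighted projective space so that the spectral sequence for the pole-order filtration degenerates at $E_1$, and one needs that every cohomology class on $\cps(\mathbf{w})\setminus Z_f$ of degree $n+1$ is represented by a rational form of the above type (a weighted version of the fact that the complement of a hypersurface in projective space has cohomology computed by forms with poles on the hypersurface).

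The main obstacle I anticipate is the strict compatibility of the pole-order filtration with the Hodge filtration in the orbifold/weighted setting. In the classical smooth projective case this is Griffiths transversality; in the weighted case one must either work with the mixed Hodge structure of Deligne on the smooth V-manifold $\cps(\mathbf{w})\setminus Z_f$ and use Steenbrink's purity statements, or pass to a finite cyclic cover that resolves the weighted structure and descend the result by taking invariants under the cyclic group action. A secondary technical point is that $\cps(\mathbf{w})$ and $Z_f$ are in general orbifolds, not manifolds; the residue and duality statements must be interpreted in orbifold cohomology, but since quasi-smoothness forces the orbifold singular locus on $Z_f$ to be inherited from $\cps(\mathbf{w})$, rational (orbifold) cohomology behaves as in the smooth case and the argument goes through.
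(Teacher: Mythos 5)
Your proposal is correct in outline, but note that the paper itself offers no proof of this statement: it is quoted directly from Steenbrink's paper \cite{Ste}, and the argument you sketch --- residues of $P\Omega_0/f^{k+1}$ on $\cps(\mathbf{w})\setminus Z_f$, the pole-order filtration matching the Hodge filtration on $H^n_0(Z_f)$, quasi-smoothness making $\partial f/\partial z_0,\ldots,\partial f/\partial z_{n+1}$ a regular sequence so the graded pieces are the Jacobian ring, plus Bott-type vanishing and the $V$-manifold Hodge theory --- is essentially the Griffiths--Steenbrink proof given in that cited source. The only point to tidy is the final index bookkeeping: your identification gives $\operatorname{Gr}_F^{n-k}H^n_0 \cong M(f)_{(k+1)d-|\mathbf{w}|}$, i.e.\ $h_0^{n-k,k}=\dim_{\C}M(f)_{(k+1)d-|\mathbf{w}|}$, so the formula as stated in the theorem (with $h_0^{i,n-i}$ paired with $(i+1)d-|\mathbf{w}|$) is reached by setting $k=i$ and invoking the Hodge symmetry $h_0^{i,n-i}=h_0^{n-i,i}$, rather than by the substitution $i=n-k$ as written.
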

It is not difficult to compute $H_2(S)$ from Theorem~\ref{thm:cohom-hyp} and Theorem~\ref{thm:cohom-seif}.  Note that since $S$ is simply connected and spin, this completely determines the diffeomorphism type of $S$ from the results of S. Smale~\cite{Sma}.

The resolution procedure for $k=2 \mod 3$ in the first family, $k=3 \mod 4$ in the second, and $k=5 \mod 6$ in the third stops with a $Y$ with a single terminal singularity.  It follows from (\ref{eq:Flen}) that these singularities are analytically $\Q$-factorial.
It follows from Theorem~\ref{thm:can-3fold} (iii) that any partial crepant resolution must be singular.

For the first three examples the divisors $E_i,i=1,\ldots,c(X)-1$ in $E=\pi^{-1}(o)$ besides the last, are ruled surfaces over an elliptic curve.  For $k=0 \mod 3, 4,$ and $6$ the divisor $E_c$ is a del Pezzo surface.  And for $k=1 \mod 3, 4,$ and $6$ the divisor $E_c$ is a cone over an elliptic curve.

\subsection{Resolutions of quotient singularities}

It is well known~\cite{Ro2} that every Gorenstein quotient singularity in dimensions $n=2$ and $n=3$ has a crepant resolution.
That is, for a finite group $G\subset SL(n,\C)$, there is a crepant resolution of $\C^n /G$ for $n=2$ and $n=3$.
Therefore if $\dim X\leq 3$ has a partial crepant resolution $\pi:Y\rightarrow X$ so that $Y$ has only orbifold singularities,
then $Y$ can be resolved to get a crepant resolution of $X$, $\tilde{\pi}:\tilde{Y}\rightarrow X$.
For our purposes we will only need to consider abelian, in fact cyclic, groups $G$.  Thus the toric resolutions in Section~\ref{subsec:tor-res} are sufficient.

Suppose $X=C(S)$ is a K\"{a}hler cone satisfying Proposition~\ref{prop:CY-cond} such that $S$ is quasi-regular with leaf space $Z$.
Then $C(S)=(\mathbf{K}_Z^\times)^{\frac{p}{q}}$, for $p,q\in\Z_+$, where $\mathbf{K}_Z^\times$ denotes the total space of the orbifold
canonical line bundle minus the zero section.  If $p=q=1$, then the total space of $\mathbf{K}_Z$ provides an obvious partial crepant resolution of $X$
with orbifold singularities.  Since $\mathbf{K}_Z$ has a negative curvature connection, given by the contact structure
$\eta$ on $S$, there is a bimeromorphic map $\pi: Y=\mathbf{K}_Y \rightarrow X$ given by collapsing the zero section~\cite{Gra}.

Let $\Picorb Z$ be the Picard group of orbifold line bundles on $Z$.  Since $\mathbf{K}_Z$ is negative,
standard arguments show that $\Picorb Z \cong H_{orb}^2 (Z,\Z)$.
\begin{defn}
The \emph{index} of a Fano orbifold $Z$, denoted $\Ind Z$, is the largest positive integer $m$ such that
$\frac{1}{m}c_1(\mathbf{K}_Z)\in H_{orb}^2 (Z,\Z)$.  Equivalently, $m$ is the largest positive integer such
that $\mathbf{K}_Z$ admits an $m$\superscript{th} root $\mathbf{K}_Z^{\frac{1}{m}}$.
\end{defn}

If $S$ is simply connected and $\Ind(Z)=1$, then $C(S)=\mathbf{K}_Z^\times$ and we have a partial resolution
$\pi: Y \rightarrow X$.  This is the case in the examples of Figure~\ref{fig:b3eq0}.  These Sasaki-Einstein manifolds first appeared in
~\cite{JoKol} and~\cite{BGN1}.  They the examples whose leaf spaces are the anti-canonically embedded quasi-smooth and well formed 2-dimensional hypersurfaces. These log del Pezzo hypersurfaces were classified in~\cite{JoKol}, and are listed in Figure~\ref{fig:b3eq0}.  They are all proved to admit K\"{a}hler-Einstein metrics.  Most were proved to admit K\"{a}hler-Einstein metrics in~\cite{JoKol}.
The remaining cases were proved in~\cite{BGN1}
for weights $(2,3,5,9)$, in~\cite{BGN2} for weights $(1,3,5,8)$, and in~\cite{Ara} for weights $(1,2,3,5)$ and $(1,3,5,7)$.

Since $Z_f\subset\cps(\mathbf{w})$ with $\mathbf{w}=(w_0,w_1,w_2,w_3)$ is anti-canonically embedded, the total space of $\mathbf{K}_{Z_f}$ is isomorphic to the weighted blow-up $X'=B_0^\mathbf{w} X_f$ of the quasi-homogeneous hypersurface $X_f=\{f=0\}\subset\C^4$.  Since $Z_f$ is well formed it, only has isolated singularities.  If $x\in Z_f$ is a singular point with local group $\Z_p$ acting with weights $(r,s)\in\Z^2$, i.e. $(z_1,z_2)\rightarrow (\alpha^r z_1,\alpha^s z_2)$, $\alpha\in\Lambda_p$ the p-th roots of unity, then $x\in X'$ is an orbifold point with weights $(r,s,-r-s)$.  And $X'$ has only isolated singularities.  We can construct a smooth resolution $\pi:Y\rightarrow X'\rightarrow X_f$ by gluing the resolutions of Proposition~\ref{prop:toric-quot-res}.

\begin{figure}[tbh]\label{fig:b3eq0}
\centering
\begin{tabular}{|c|c|c|c|c|c|}
\hline
$\mathbf{w}=(w_0,w_1,w_2,w_3)$ & $d$ & $c(X)$ & $m_{\mathbf{w}}$ & $n_{\mathbf{w}}$ & $S$ \\\hline\hline
$(2,2k+1,2k+1,4k+1)$ & $8k+4$ & $6k+1$ &  12 & 5 & $\#7(S^2\times S^3)$ \\\hline\hline
$(1,2,3,5)$          & 10     &  2     &  17 & 5 & $\#8(S^2\times S^3)$ \\\hline
$(1,3,5,7)$          & 15     &  4     &  19 & 8 & $\#8(S^2\times S^3)$ \\\hline
$(1,3,5,8)$          & 16     &  4     &  20 & 8 & $\#9(S^2\times S^3)$ \\\hline
$(2,3,5,9)$          & 18     &  3     &  13 & 5 & $\#6(S^2\times S^3)$ \\\hline
$(3,3,5,5)$          & 15     &  12    &  10 & 2 & $\#4(S^2\times S^3)$ \\\hline
$(3,5,7,11)$         & 25     &  10    &  8  & 3 & $\#4(S^2\times S^3)$ \\\hline
$(3,5,7,14)$         & 28     &  10    &  9  & 4 & $\#5(S^2\times S^3)$ \\\hline
$(3,5,11,18)$        & 36     &  10    &  10 & 3 & $\#5(S^2\times S^3)$ \\\hline
$(5,14,17,21)$       & 56     &  24    &  5  & 1 & $\#3(S^2\times S^3)$ \\\hline
$(5,19,27,31)$       & 81     &  27    &  5  & 1 & $\#2(S^2\times S^3)$ \\\hline
$(5,19,27,50)$       & 100    &  25    &  6  & 1 & $\#3(S^2\times S^3)$ \\\hline
$(7,11,27,37)$       & 81     &  27    &  4  & 0 & $\#2(S^2\times S^3)$ \\\hline
$(7,11,27,44)$       & 88     &  27    &  6  & 1 & $\#3(S^2\times S^3)$ \\\hline
$(9,15,17,20)$       & 60     &  14    &  4  & 0 & $\#2(S^2\times S^3)$ \\\hline
$(9,15,23,23)$       & 69     &  46    &  7  & 0 & $\#4(S^2\times S^3)$ \\\hline
$(11,29,39,49)$      & 127    &  44    &  4  & 0 & $\#2(S^2\times S^3)$ \\\hline
$(11,49,69,128)$     & 256    &  64    &  4  & 0 & $S^2\times S^3$       \\\hline
$(13,23,35,57)$      & 127    &  63    &  4  & 0 & $\#2(S^2\times S^3)$ \\\hline
$(13,35,81,128)$     & 256    &  64    &  4  & 0 & $S^2\times S^3$      \\\hline
\end{tabular}
\caption{Examples with $b_3(Y)=0$}
\end{figure}

We have one series of examples and the rest are sporadic.  For each possible set of weights $(w_0,w_1,w_2,w_3)$ we give the number of
exceptional divisors $c(X)$; $m_{\mathbf{w}}$ is the complex dimension of the space of admissible polynomials, i.e. those of weighted degree $d$
giving a quasi-smooth hypersurface; $n_{\mathbf{w}}$ is the dimension of $m_{\mathbf{w}}$ modulo the action of the automorphism group of
$\cps(\mathbf{w})$, thus $n_{\mathbf{w}}$ is the complex dimension of the moduli of Sasaki-Einstein structures; the last column gives the
link $S$ up to diffeomorphism.  These deformations preserve the types of the singularities.  Thus we have moduli of Ricci-flat K\"{a}hler
asymptotically conical manifolds.  Since $b_3(Y)=0$, the Betti numbers of $Y$ are determined by the information in the table.

\subsection{Higher dimensional examples}

The first example of Figure~\ref{fig:b3neq0} easily generalizes to arbitrary dimensions.  We consider
the hypersurface $X_k =\{x_0^n +x_1^n +\cdots +x_{n-1}^n +x_n^k =0\}\subset\C^{n+1}$ with $k\geq n$.  we see from
(\ref{eq:blow-up-adj}) that the usual blow-up at $o\in X_k$ is crepant.  Then $B_0 X_k$ has one singularity
isomorphic to $X_{k-n}$.  Proceeding inductively, we get a smooth crepant resolution $Y_k$ if $k= 0$ or $1 \mod n$,
since the surface $X_k$ for $k=0$ and $1$ is smooth.

\begin{figure}[tbh]\label{fig:hdim}
\centering
\begin{tabular}{|c|c|c|c|}
\hline
$X$ & $S$ S-E & $k \mod n$ & $c(X)$  \\\hline
\multirow{2}{*}{ $x_0^n +x_1^n +\cdots +x_{n-1}^n +x_n^k =0$} & \multirow{2}{*}{$k>n(n-1)$, $k=n$} & 0 & $\lfloor\frac{k}{n}\rfloor$  \\\cline{3-4}
 & & 1 & $\lfloor\frac{k}{n}\rfloor$  \\\hline
\end{tabular}
\caption{Examples in higher dimensions}
\end{figure}

We list some of the properties of the resolved spaces $Y_k$ in Figure~\ref{fig:hdim}.  The second column gives the
range of $k$ for which the Sasaki link $S$ is known to admit a Sasaki-Einstein metric from the numerical criteria
in~\cite{BGK}.  Recall that $S$ is $(n-2)$-connected, so the only non-trivial homology is in dimensions $n-1$ and $n$.
The non-trivial Betti numbers are given via a formula in
~\cite{MilOr}.  For $k= 0 \mod n$ we have
\begin{equation}
b_{n-1}(S)=(-1)^{n+1}\left(1+\frac{(1-n)^{n+1} -1}{n}\right).
\end{equation}
For $k=1 \mod n$, $S$ is a rational homology sphere and the order of its homology is given by Theorem 3 of
~\cite{BG3}.
\begin{equation}
|H_{n-1}(S)|=k^{b_{n-2}},\quad\text{where } b_{n-2}=(-1)^n\left(1+\frac{(1-n)^n -1}{n}\right).
\end{equation}
Here $b_{n-2}$ is the Betti number of link of the Calabi-Yau hypersurface
$F=\{x_0^n +\cdots +x_{n-1}^n =0\}\subset\cps^{n-1}$.

The exceptional divisors of the resolution $\pi:Y_k \rightarrow X_k$ are ruled varieties
$E_j =\mathbb{P}(\mathcal{O}_F(1)\oplus\mathcal{O}_F), j=1,\ldots, c(X)-1$, besides the last which for
$k=0 \mod n$ is the Fano hypersurface
$E_{c} =\{x_0^n +x_1^n +\cdots +x_{n-1}^n +x_n^n =0\}\subset\cps^n$ and for $k=1 \mod n$ is the cone over $F$,
$E_{c} =\{x_0^n +x_1^n +\cdots +x_{n-1}^n  =0\}\subset\cps^n$.
The Euler characteristic of $Y_k$ can be easily computed
\begin{equation}
 \chi(Y_k)=
   \begin{cases}
    \frac{c}{n}\left((1-n)^n -1\right) +cn -(1-n)^n +1 & \text{if }k=0 \mod n, \\
    \frac{c}{n}\left((1-n)^n -1\right) +cn +1          & \text{if }k=1 \mod n,
   \end{cases}
\end{equation}
where $c=c(X)$.

\bibliographystyle{plain}

\end{document}